\providecommand\@dotsep{5}
\def\listtodoname{List of Todos}
\def\listoftodos{\@starttoc{tdo}\listtodoname}
\numberwithin{equation}{section}
\newcommand{\h}{H^{s}_{\e}}
\newcommand{\R}{\mathbb{R}}
\newcommand{\2}{2^{*}_{s}}
\newcommand{\C}{\mathbb{C}}
\newcommand{\N}{\mathcal{N}}
\DeclareMathOperator{\dive}{div}
\DeclareMathOperator{\supp}{supp}
\DeclareMathOperator{\e}{\varepsilon}
\newtheorem{prop}{Proposition}[section]
\newtheorem{lem}{Lemma}[section]
\newtheorem{thm}{Theorem}[section]
\newtheorem{cor}{Corollary}[section]
\keywords{Fractional magnetic operators; Kirchhoff equation; variational methods.}
\subjclass[2010]{35A15, 35R11, 35S05, 58E05.}
\date{}
\begin{document}
\title[fractional Kirchhoff equations with magnetic fields]{Multiple concentrating solutions for a fractional Kirchhoff equation with magnetic fields}

\author[V. Ambrosio]{Vincenzo Ambrosio}
\address{Vincenzo Ambrosio\hfill\break\indent 
Dipartimento di Scienze Matematiche, Informatiche e Fisiche \hfill\break\indent
Universit\`a di Udine \hfill\break\indent
via delle Scienze 206 \hfill\break\indent
33100 Udine, Italy \hfill\break\indent}
\email{vincenzo.ambrosio2@unina.it}

\begin{abstract}
This paper is concerned with the multiplicity and concentration behavior of nontrivial solutions for the following fractional Kirchhoff equation in presence of a magnetic field:
\begin{equation*}
\left(a\varepsilon^{2s}+b\varepsilon^{4s-3} [u]_{A/\varepsilon}^{2}\right)(-\Delta)_{A/\varepsilon}^{s}u+V(x)u=f(|u|^{2})u \quad \mbox{ in } \mathbb{R}^{3},
\end{equation*}
where $\varepsilon>0$ is a small parameter, $a, b>0$ are constants, $s\in (\frac{3}{4}, 1)$, $(-\Delta)^{s}_{A}$ is the fractional magnetic Laplacian, $A:\mathbb{R}^{3}\rightarrow \mathbb{R}^{3}$ is a smooth magnetic potential, $V:\mathbb{R}^{3}\rightarrow \mathbb{R}$ is a positive continuous potential having a local minimum and $f:\mathbb{R}\rightarrow \mathbb{R}$ is a $C^{1}$ subcritical nonlinearity. 
Applying penalization techniques and Ljusternik-Schnirelman theory, we relate the number of nontrivial solutions with the topology of the set where the potential $V$ attains its minimum. 
\end{abstract}

\maketitle

\section{introduction}

\noindent
In this paper, we focus our attention on the following fractional Kirchhoff equation
\begin{equation}\label{P}
\left(a\e^{2s}+b\e^{4s-3}[u]_{A/\varepsilon}^{2}\right)(-\Delta)_{A/\varepsilon}^{s}u+V(x)u=f(|u|^{2})u \quad \mbox{ in } \mathbb{R}^{3},
\end{equation}
where $\e>0$ is a small parameter, $a$ and $b$ are positive constants, $s\in (\frac{3}{4}, 1)$, 
$$
[u]_{A/\varepsilon}^{2}:=\iint_{\R^{6}} \frac{|u(x)-e^{\imath (x-y)\cdot \frac{A}{\e}(\frac{x+y}{2})} u(y)|^{2}}{|x-y|^{3+2s}} \, dxdy ,
$$
the function $V: \R^{3}\rightarrow \R$ is a continuous potential verifying the following assumptions introduced by del Pino and Felmer \cite{DF}:
\begin{compactenum}[$(V_1)$]
\item $\inf_{x\in \R^{3}} V(x)=V_{0}>0$;
\item  there exists a bounded domain $\Lambda\subset \R^{3}$ such that
\begin{equation}
V_{0}<\min_{\partial \Lambda} V \quad \mbox{ and } \quad M=\{x\in \Lambda: V(x)=V_{0}\}\neq \emptyset,
\end{equation}
\end{compactenum} 
and $f:\R\rightarrow \R$ is a $C^{1}$-function satisfying the following conditions:
\begin{compactenum}[$(f_1)$]
\item $f(t)=0$ for $t\leq 0$ and $\displaystyle{\lim_{t\rightarrow 0} \frac{f(t)}{t}=0}$;
\item there exists $q\in (4, 2^{*}_{s})$, with $2^{*}_{s}= \frac{6}{3-2s}$, such that 
$$
\lim_{t\rightarrow \infty} \frac{f(t)}{t^{\frac{q-2}{2}}}=0;
$$
\item there exists $\theta\in (4, \2)$ such that $0<\frac{\theta}{2} F(t)\leq t f(t)$ for any $t>0$, where $F(t)=\int_{0}^{t} f(\tau)d\tau$;
\item there exist $\sigma\in (4, \2)$ and $C_{\sigma}>0$ such that $f'(t)t-f(t)\geq C_{\sigma}t^{\frac{\sigma-2}{2}}$ for all $t>0$.
\end{compactenum} 
We assume that $A: \R^{3}\rightarrow \R^{3}$ is a H\"older continuous magnetic potential of exponent $\alpha\in(0,1]$,
and $(-\Delta)^{s}_{A}$ is the fractional magnetic Laplacian which,  up to a normalization constant, is defined for any $u\in C^{\infty}_{c}(\R^{3}, \C)$ as
\begin{equation*}
(-\Delta)^{s}_{A}u(x)
:=2 \lim_{r\rightarrow 0} \int_{B_{r}^{c}(x)} \frac{u(x)-e^{\imath (x-y)\cdot A(\frac{x+y}{2})} u(y)}{|x-y|^{3+2s}} \,dy.
\end{equation*}
This operator has been recently introduced in \cite{DS} and relies essentially on the L\'evy-Khintchine formula for the generator of a general L\'evy process. For completeness, we emphasize that in the literature there are three different fractional magnetic operators and that they coincide when $s=1/2$ and $A$ is assumed to be linear;  see \cite{I10} for more details.

In absence of the magnetic field, i.e. $A\equiv 0$, the operator $(-\Delta)^{s}_{A}$ is consistent 
with the following definition of fractional Laplacian operator $(-\Delta)^{s}$ for smooth functions $u$
\begin{equation*}
(-\Delta)^{s}u(x)
:=2 \lim_{r\rightarrow 0} \int_{B_{r}^{c}(x)} \frac{u(x)-u(y)}{|x-y|^{3+2s}} \,dy.
\end{equation*}
This operator arises in a quite natural way in many different physical situations in which one has to consider long range anomalous diffusions and transport in highly heterogeneous medium; see \cite{DPV}.
When $s\rightarrow 1$, the authors in \cite{PSV, SV} showed that $(-\Delta)^{s}_{A}$ can be considered as  the fractional counterpart of the magnetic Laplacian
$$
-\Delta_{A} u:=\left(\frac{1}{\imath}\nabla-A\right)^{2}u= -\Delta u -\frac{2}{\imath} A(x) \cdot \nabla u + |A(x)|^{2} u -\frac{1}{\imath} u \dive(A(x)),
$$ 
which plays a fundamental role in quantum mechanics in the description of the dynamics of the particle in a non-relativistic setting; see \cite{RS}. Motivated by this fact, many authors  \cite{AFF, AS, CS, EL, K} dealt with the existence of nontrivial solutions of the following Schr\"odinger equation with magnetic field 
\begin{equation}\label{MSE}
-\e^{2}\Delta_{A} u+V(x)u=f(x, |u|^{2})u \quad \mbox{ in } \R^{N}.
\end{equation}
Equation \eqref{MSE} appears when we seek standing wave solutions $\psi(x, t)=u(x)e^{-\imath \frac{E}{\e}t}$, with $E\in \R$, for the following time-dependent nonlinear Schr\"odinger equation with magnetic field:
$$
\imath \e \frac{\partial \psi}{\partial t}=\left(\frac{\e}{\imath}\nabla-A(x)\right)^{2} \psi+U(x)\psi-f(|\psi|^{2})\psi \quad \mbox{ in } (x, t)\in \R^{N}\times \R,
$$
where $U(x)=V(x)+E$. An important class of solutions of \eqref{MSE} are the so called semi-classical states which concentrate and develop a spike shape around one, or more, particular points in $\R^{N}$, while vanishing elsewhere as $\e\rightarrow 0$. This interest is due to the fact that the transition from quantum mechanics to classical mechanics can be formally performed by sending $\e\rightarrow 0$.

Recently, a great attention has been devoted to the study of the following fractional magnetic Schr\"odinger equation
\begin{equation}\label{FMSE}
\e^{2s}(-\Delta)^{s}_{A}u+V(x)u=f(x, |u|^{2})u \quad \mbox{ in } \R^{N}.
\end{equation}
d'Avenia and Squassina \cite{DS} studied a class of minimization problems in the spirit of results due to Esteban and Lions in \cite{EL}.
Fiscella et al. \cite{FPV} obtained the multiplicity of nontrivial solutions for a fractional magnetic problem in a bounded domain. 
In \cite{AD} the author and d'Avenia dealt with the existence and multiplicity of solutions to \eqref{FMSE} for small $\e>0$ when $f$ has a subcritical growth and the potential $V$ satisfies the following global condition due to Rabinowitz \cite{Rab}:
\begin{equation}\label{RVC}
\liminf_{|x|\rightarrow \infty} V(x)>\inf_{x\in \R^{N}} V(x).
\end{equation}
In \cite{MPSZ} Mingqi et al. used suitable variational methods to prove the existence and multiplicity of nontrivial solutions for a class of super-and sub-linear fractional Schr\"odinger-Kirchhoff equations involving an external magnetic potential. We also mention \cite{Amjm, FVe, ZSZ} for other interesting results for nonlocal problems involving the operator $(-\Delta)^{s}_{A}$.

We stress that in the case $A\equiv 0$, equation \eqref{P}  becomes a fractional Kirchhoff equation of the type
\begin{equation}\label{FK}
\left(a\e^{2s}+b\e^{4s-3}[u]^{2}\right)(-\Delta)^{s}u+V(x)u=f(x,u) \quad \mbox{ in } \R^{3},
\end{equation}
which has been widely studied in the last decade. For instance, when $\e=1$, some existence and multiplicity results for fractional Kirchhoff equations in $\R^{N}$ can be found in \cite{AI1, PuSa, PXZ} and references therein; see also \cite{FMBS, FP, FV, Ny} for problems in bounded domains. In particular, in \cite{AI2, HZm} the authors studied fractional perturbed Kirchhoff-type problems, that is provided that $\e>0$ is sufficiently small.  
It is worthwhile to mention that Fiscella and Valdinoci \cite{FV}  proposed for the first time a stationary Kirchhoff model in the fractional setting, which  considers the nonlocal aspect of the tension arising from nonlocal measurements of the fractional length of the string.
Such model can be regarded as the nonlocal stationary analogue of the Kirchhoff equation
\begin{equation*}
\rho u_{tt} - \left( \frac{p_{0}}{h}+ \frac{E}{2L}\int_{0}^{L} |u_{x}|^{2} dx \right) u_{xx} =0,
\end{equation*}
which was  presented by Kirchhoff \cite{Kir} in $1883$ as a generalization of the well-known D'Alembert's wave equation for free vibrations of elastic strings.  
The Kirchhoff's model takes into account the changes in length of the string produced by transverse vibrations. Here $u=u(x, t)$ is the transverse string displacement at the space coordinate $x$ and time $t$, $L$ is the length of the string, $h$ is the area of the cross section, $E$ is Young's modulus of the material, $\rho$ is the mass density, and $p_{0}$ is the initial tension; see \cite{B, Lions, P}.
In the classical framework,
probably the first result concerning the following perturbed Kirchhoff equation 
\begin{equation}\label{CKE}
-\left(a\e^{2}+b\e \int_{\R^{3}}|\nabla u|^{2} dx\right)\Delta u+V(x)u=f(u) \quad \mbox{ in } \R^{3},
\end{equation}
has been obtained by He and Zou \cite{HZ}, who proved the multiplicity and concentration behavior of positive solutions to \eqref{CKE} for $\e>0$ small, under assumption \eqref{RVC} on $V$ and involving a subcritical nonlinearity. Subsequently, Wang et al. \cite{WTXZ} investigated the multiplicity and concentration phenomenon for \eqref{CKE} in presence of a critical term. Under local conditions $(V_1)$-$(V_2)$, Figueiredo and Santos Junior \cite{FJ} proved a multiplicity result for a subcritical Kirchhoff equation via the generalized Nehari manifold method. The existence and concentration of positive solutions for \eqref{CKE} with critical growth, has been considered in \cite{HLP}.

On the other hand, when we take $b=0$ in \eqref{FK}, then one has the following fractional Schr\"odinger equation (see \cite{Laskin})
\begin{equation}\label{FSE}
\e^{2s}(-\Delta)^{s}u+V(x)u=f(x,u) \quad \mbox{ in } \R^{N},
\end{equation} 
for which several existence and multiplicity results under different assumptions on $V$ and $f$ have been established via appropriate variational and topological methods; see \cite{A1, A3, DDPW, DMV, Secchi} and references therein. 
In particular way, Davila et al. \cite{DDPW} proved that if $V\in C^{1, \alpha}(\R^{N})\cap L^{\infty}(\R^{N})$ and $\inf_{x\in \R^{N}} V(x)>0$, then (\ref{P}) has multi-peak  solutions.
Alves and Miyagaki \cite{AM} (see  also \cite{A1, A3}) considered the existence and concentration of positive solutions of \eqref{FSE} when $V$ satisfies $(V_1)$-$(V_2)$ and $f$ has a subcritical growth. 
Recently, the author and Isernia \cite{AI3} studied the multiplicity and concentration of positive solutions for a fractional Schr\"odinger equation involving the fractional $p$-Laplacian operator when the potential satisfies \eqref{RVC} and the nonlinearity is assumed to be subcritical or critical.

Particularly motivated by the above works and the interest shared by the mathematical community on nonlocal magnetic problems,
in this paper we deal with the multiplicity and concentration of nontrivial solutions to \eqref{P} when $\e\rightarrow 0$, under assumptions $(V_1)$-$(V_2)$ and $(f_1)$-$(f_4)$.
More precisely, our main result is the following one:
\begin{thm}\label{thm1}
Assume that $(V_1)$-$(V_2)$ and $(f_1)$-$(f_4)$ hold. Then, for any $\delta>0$ such that
$$
M_{\delta}=\{x\in \R^{3}: dist(x, M)\leq \delta\}\subset \Lambda,
$$ 
there exists $\e_{\delta}>0$ such that, for any $\e\in (0, \e_{\delta})$, problem \eqref{P} has at least $cat_{M_{\delta}}(M)$ nontrivial solutions. Moreover, if $u_{\e}$ denotes one of these solutions and $x_{\e}$ is a global maximum point of $|u_{\e}|$, then we have 
$$
\lim_{\e\rightarrow 0} V(x_{\e})=V_{0}
$$	
and
$$
|u_{\e}(x)|\leq \frac{C\e^{3+2s}}{C\e^{3+2s}+|x-x_{\e}|^{3+2s}} \quad \forall x\in \R^{3}.
$$
\end{thm}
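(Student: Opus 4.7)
The plan is to combine the del Pino--Felmer penalization scheme with the Ljusternik--Schnirelmann category method and a barycentric argument, adapting to the magnetic Kirchhoff setting the strategy developed in \cite{AM} for the local Schr\"odinger case and in \cite{AD} for the fractional magnetic one. First I would pass to the rescaled variable $x \mapsto \e x$ to transform \eqref{P} into
$$
\left(a+b [u]_{A_{\e}}^{2}\right)(-\Delta)_{A_{\e}}^{s}u+V_{\e}(x)u=f(|u|^{2})u \quad \text{in } \R^{3},
$$
with $A_{\e}(x)=A(\e x)$ and $V_{\e}(x)=V(\e x)$, set in the magnetic Hilbert space $\h$. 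Following del Pino--Felmer, I would replace $f$ outside $\Lambda/\e$ by a truncated Carath\'eodory nonlinearity $g_{\e}$ that is sublinear at infinity, so that the penalized functional $\J_{\e}$ satisfies the Palais--Smale condition; the hypotheses $(f_3)$--$(f_4)$ with $\theta,\sigma>4$ are tailored precisely so that the Kirchhoff fibering map $t\mapsto \J_{\e}(tu)$ attains a unique maximum and the Nehari manifold $\N_{\e}$ is a $C^{1}$ natural constraint diffeomorphic to the unit sphere of $\h$.

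Next I would study the autonomous limit problem
$$
\left(a+b [u]^{2}\right)(-\Delta)^{s}u+V_{0}u=f(|u|^{2})u \quad \text{in } \R^{3},
$$
whose mountain-pass/ground-state level $c_{V_{0}}$ is the benchmark value. Using the magnetic diamagnetic inequality and a concentration--compactness splitting for sequences with vanishing gradient of $\J_{\e}$, I would prove that below $c_{V_{0}}+o(1)$ every Palais--Smale sequence is, after magnetic translation, relatively compact. Then I would introduce the two classical maps: $\Phi_{\e}:M\to \N_{\e}$, which attaches to $y\in M$ a cut-off, magnetically twisted copy $w_{\e,y}$ of a ground state of the autonomous problem centered at $y/\e$, and the barycenter $\beta_{\e}:\N_{\e}\cap \{\J_{\e}\le c_{V_{0}}+h(\e)\} \to \R^{3}$. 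Showing $\J_{\e}(\Phi_{\e}(y))\to c_{V_{0}}$ uniformly in $y\in M$ and that $\beta_{\e}\circ \Phi_{\e}$ is homotopic to the inclusion $M\hookrightarrow M_{\delta}$ for $\e$ small, a standard category inequality produces at least $cat_{M_{\delta}}(M)$ critical points of $\J_{\e}$.

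It remains to verify that these penalized critical points actually solve \eqref{P}, and to recover the asymptotic information. For the first point I would apply a Moser iteration on $|u_{\e}|$, based on the pointwise magnetic Kato inequality $\langle(-\Delta)^{s}_{A_{\e}}u,u/|u|\rangle \ge (-\Delta)^{s}|u|$, to obtain a uniform $L^{\infty}$-bound and then decay estimates that force $|u_{\e}|$ to be small outside $\Lambda/\e$, so that $g_{\e}(u_{\e})=f(|u_{\e}|^{2})u_{\e}$ there. For the concentration part, one argues that if $x_{\e}$ is a global maximum of $|u_{\e}|$, then $y_{\e}:=\e x_{\e}$ must converge, up to subsequences, to a point $y_{0}\in M$: otherwise the sequence $v_{\e}(x)=u_{\e}(x+x_{\e})$ (with appropriate phase) would converge to a nontrivial solution of the limit problem at some $\tilde V>V_{0}$, contradicting the energy estimate from the previous step. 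The polynomial decay $|u_{\e}(x)|\le C\e^{3+2s}/(C\e^{3+2s}+|x-x_{\e}|^{3+2s})$ follows by comparing $|u_{\e}|$ with a barrier built from the fundamental solution of $(-\Delta)^{s}+1$, exploiting the uniform smallness of $|v_{\e}|$ at infinity to absorb the nonlinear right-hand side.

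The principal obstacle I anticipate is the Palais--Smale analysis for the Kirchhoff term: controlling the limit of $b\,[u_{n}]_{A_{\e}}^{2}\,(-\Delta)^{s}_{A_{\e}}u_{n}$ requires \emph{strong} convergence of the magnetic Gagliardo seminorms along critical sequences, which is delicate because the phase factor $e^{\imath(x-y)\cdot A_{\e}((x+y)/2)}$ obstructs a direct Brezis--Lieb splitting. The range $s\in(3/4,1)$ and the choice $4<\theta,\sigma<2^{*}_{s}$ enter precisely here, to guarantee both the boundedness of Palais--Smale sequences and the unique-maximum property of the Nehari fibering in presence of the nonlocal coefficient $b[u]^{2}_{A_{\e}}$. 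Once this compactness step is established, the remaining arguments reduce to a careful but by now standard adaptation of the scheme in \cite{AD, AM}.
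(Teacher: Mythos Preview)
Your plan is correct and follows essentially the same route as the paper: del Pino--Felmer penalization, Ljusternik--Schnirelmann category via the maps $\Phi_{\e}$ and $\beta_{\e}$, a Moser iteration combined with a Kato-type approximation argument to show the penalized solutions solve the original problem, and a comparison with a polynomial barrier for the decay. One small organizational point: for the \emph{penalized} functional the paper proves the Palais--Smale condition at every level (the truncation outside $\Lambda_{\e}$ already supplies the tightness, no energy threshold or magnetic translation is needed), while the concentration-compactness/translation step you describe is used separately, for sequences $u_{n}\in\mathcal{N}_{\e_{n}}$ as $\e_{n}\to 0$, to locate the limiting profile in $M$.
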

In what follows we give a sketch of the proof of Theorem \ref{thm1}. Firstly, 
using the change of variable $x\mapsto \e x$, instead of (\ref{P}), we can consider the following equivalent problem
\begin{equation}\label{Pe}
\left(a+b[u]_{A_\varepsilon}^{2}\right)(-\Delta)_{A_{\e}}^{s} u + V_{\e}( x)u=  f(|u|^{2})u  \quad \mbox{ in } \R^{3},
\end{equation}
where $A_{\e}(x):=A(\e x)$ and $V_{\e}(x):=V(\e x)$. Due to the lack of information on the behavior of $V$ at infinity, inspired by \cite{AFF, DF}, we modify the nonlinearity $f$ in an appropriate way, considering an auxiliary problem. In this way, we are able to apply suitable variational arguments to study the modified problem, and then we prove that, for $\e>0$ small enough, the solutions of the modified problem are also solutions of the original one.
More precisely, we fix $k>2$ and $a'>0$ such that $f(a')=\frac{V_{0}}{k}$, and we consider the function
$$
\hat{f}(t):=
\begin{cases}
f(t)& \text{ if $t \leq a'$} \\
\frac{V_{0}}{k}    & \text{ if $t >a'$}.
\end{cases}
$$ 
Let $t_{a'}, T_{a'}>0$ such that $t_{a'}<a'<T_{a'}$ and take $\xi\in C^{\infty}_{c}(\R, \R)$ such that:
\begin{compactenum}[$(\xi_1)$]
\item $\xi(t)\leq \hat{f}(t)$ for all $t\in [t_{a'}, T_{a'}]$,
\item $\xi(t_{a'})=\hat{f}(t_{a'})$, $\xi(T_{a'})=\hat{f}(T_{a'})$, $\xi'(t_{a'})=\hat{f}'(t_{a'})$ and $\xi'(T_{a'})=\hat{f}'(T_{a'})$, 
\item the map $t\mapsto \xi(t)$ is increasing for all $t\in [t_{a'}, T_{a'}]$.
\end{compactenum}
Then we define $\tilde{f}\in C^{1}(\R, \R)$ as follows:
$$
\tilde{f}(t):=
\begin{cases}
\hat{f}(t)& \text{ if $t\notin [t_{a'}, T_{a'}]$} \\
\xi(t)    & \text{ if $t\in [t_{a'}, T_{a'}]$}.
\end{cases}
$$ 
Finally, we introduce the following penalized nonlinearity $g: \R^{3}\times \R\rightarrow \R$ by setting
$$
g(x, t)=\chi_{\Lambda}(x)f(t)+(1-\chi_{\Lambda}(x))\tilde{f}(t),
$$
where $\chi_{\Lambda}$ is the characteristic function on $\Lambda$, and  we set $G(x, t)=\int_{0}^{t} g(x, \tau)\, d\tau$.
From assumptions $(f_1)$-$(f_4)$ and $(\xi_1)$-$(\xi_3)$, it follows that $g$ verifies the following properties:
\begin{compactenum}[($g_1$)]
\item $\displaystyle{\lim_{t\rightarrow 0} \frac{g(x, t)}{t}=0}$ uniformly in $x\in \R^{3}$;
\item $g(x, t)\leq f(t)$ for any $x\in \R^{3}$ and $t>0$;
\item $(i)$ $0< \frac{\theta}{2} G(x, t)\leq g(x, t)t$ for any $x\in \Lambda$ and $t>0$, \\
$(ii)$ $0\leq  G(x, t)\leq g(x, t)t\leq \frac{V(x)}{k}t$ and $0\leq g(x,t)\leq \frac{V(x)}{k}$ for any $x\in \Lambda^{c}$ and $t>0$;
\item $t\mapsto \frac{g(x,t)}{t}$ is increasing for all $x\in \Lambda$ and $t>0$.
\end{compactenum}
Then we introduce the following modified problem 
\begin{equation}\label{MPe}
\left(a+b[u]_{A_\varepsilon}^{2}\right)(-\Delta)_{A_{\e}}^{s} u + V_{\e}( x)u=  g_{\e}(x, |u|^{2})u \quad \mbox{ in } \R^{3}. 
\end{equation}
Let us note that if $u$ is a solution of (\ref{MPe}) such that 
\begin{equation}\label{ue}
|u(x)|\leq t_{a'} \mbox{ for all } x\in  \Lambda_{\e}^{c},
\end{equation}
where $\Lambda_{\e}:=\{x\in \R^{N}: \e x\in \Lambda\}$, then $u$ is also a solution of (\ref{Pe}).
Therefore, in order to study weak solutions of \eqref{MPe}, we look for critical points of the following functional associated with \eqref{Pe}:
$$
J_{\e}(u)=\frac{a}{2}[u]_{A_\varepsilon}^{2} + \frac{1}{2}\int_{\R^{3}}V_{\e}(x) |u|^{2}\, dx+\frac{b}{4}[u]_{A_\varepsilon}^{4}-\frac{1}{2}\int_{\R^{3}} G(\e x, |u|^{2})\, dx
$$
defined on the fractional magnetic Sobolev space 
$$
\h=\left\{u\in \mathcal{D}^{s}_{A_{\e}}(\R^{3}, \C): \int_{\R^{3}} V_{\e}(x) |u|^{2}\, dx<\infty\right\};
$$
see Section $2$. 
From $(g_1)$-$(g_3)$, it is easy to check that $J_{\e}$ has mountain pass geometry \cite{AR}. Anyway, the presence of the magnetic field and the lack of compactness of the embeddings $\h$ into $L^{p}(\R^{3}, \R)$, with $p\in (2, \2)$, create several difficulties to show that $J_{\e}$ verifies the Palais-Smale condition ($(PS)$ in short).
More precisely, the Kirchhoff term $[u]^{2}_{A}(-\Delta)^{s}_{A}$ does not permit to deduce in standard way that weak limits  of (bounded) Palais-Smale sequences of $J_{\e}$ are critical points of it. Therefore, a more careful investigation will be needed to recover some compactness property for the modified functional.
After that, combining some ideas introduced by Benci and Cerami \cite{BC}
with the Ljusternik-Schnirelman theory, we deduce a multiplicity result for the modified problem. We point out that the H\"older regularity assumption on the magnetic field $A$ and the fractional diamagnetic inequality \cite{DS}, will play a very important role to apply the minimax methods; see Sections $3$ and $4$.
In order to show that the solutions of \eqref{MPe} are indeed solutions of \eqref{Pe}, we need to show that \eqref{ue} holds for $\e$ small enough. 
This property will be proved using a suitable variant of the Moser iteration argument \cite{Moser} and a sort of Kato's inequality \cite{Kato} for $(-\Delta)^{s}_{A}$. We stress that $L^{\infty}$-estimates as in \cite{AFF} seem very hard to adapt in the nonlocal magnetic framework. Moreover, differently from the classical magnetic case (see \cite{CS, K}), we do not have a Kato's inequality for $(-\Delta)^{s}_{A}$ (except for $s=1/2$ as showed in \cite{HIL}, while here we are assuming $s>3/4$). Therefore, in this work we develop some new ingredients which we believe to be useful for future problems like \eqref{P}. We also provide a decay estimate of solutions of \eqref{P} which is in clear accordance with the results in \cite{FQT}.
As far as we know,  this is the first time that penalization methods jointly with Ljusternik-Schnirelmann theory are used to obtain multiple solutions for a fractional Kirchhoff equation with magnetic fields. 
\noindent

We organize the paper in the following way: in Section $2$ we collect some preliminary results for fractional Sobolev spaces; in Section $3$ we study the modified functional; in Section $4$ we provide a multiplicity result for \eqref{MPe}; finally, in Section $5$, we present the proof of Theorem \ref{thm1}.

\section{Preliminaries}\label{sec2}
In this preliminary section we fix the notations and we recall some technical results. 
We denote by $H^{s}(\R^{3}, \R)$ the fractional Sobolev space 
$$
H^{s}(\R^{3}, \R)=\{u\in L^{2}(\R^{3}, \R): [u]<\infty\},
$$
where
$$
[u]^{2}=\iint_{\R^{6}} \frac{|u(x)-u(y)|^{2}}{|x-y|^{3+2s}} dxdy
$$
is the Gagliardo seminorm.
We recall that the embedding $H^{s}(\R^{3}, \R)\subset L^{q}(\R^{3}, \R)$ is continuous for all $q\in [2, \2)$ and locally compact for all $q\in [1, \2)$; see \cite{DPV, MBRS}.\\
Let $L^{2}(\R^{3}, \C)$ be the space of complex-valued functions such that $\|u\|_{L^{2}(\R^{3})}^{2}=\int_{\R^{3}}|u|^{2}\, dx<\infty$ endowed with the inner product 
$\langle u, v\rangle_{L^{2}}=\Re\int_{\R^{3}} u\bar{v}\, dx$, where the bar denotes complex conjugation.

Let us denote by
$$
[u]^{2}_{A}:=\iint_{\R^{6}} \frac{|u(x)-e^{\imath (x-y)\cdot A(\frac{x+y}{2})} u(y)|^{2}}{|x-y|^{3+2s}} \, dxdy,
$$
and consider
$$
D_A^s(\R^3,\C)
:=
\left\{
u\in L^{2_s^*}(\R^3,\C) : [u]^{2}_{A}<\infty
\right\}.
$$
Then, we consider the Hilbert space
$$
H^{s}_{\e}:=
\left\{
u\in D_{A_{\e}}^s(\R^3,\C): \int_{\R^{3}} V_{\e}(x) |u|^{2}\, dx <\infty
\right\}
$$ 
endowed with the scalar product
\begin{align*}
&\langle u , v \rangle_{\e}:= a\Re\iint_{\R^{6}} \frac{(u(x)-e^{\imath(x-y)\cdot A_{\e}(\frac{x+y}{2})} u(y))\overline{(v(x)-e^{\imath(x-y)\cdot A_{\e}(\frac{x+y}{2})}v(y))}}{|x-y|^{3+2s}} dx dy+\Re \int_{\R^{3}} V_{\e}(x) u \bar{v} dx
\end{align*}
for all $u, v\in \h$, and let
$$
\|u\|_{\e}:=\sqrt{\langle u , u \rangle_{\e}}.
$$
In what follows we list some useful lemmas; see \cite{AD, DS} for more details.
\begin{lem}\cite{AD, DS}
The space $\h$ is complete and $C_c^\infty(\R^3,\C)$ is dense in $\h$. 
\end{lem}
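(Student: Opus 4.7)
My plan is to realize $H^s_\e$ as a closed subspace of a product Hilbert space. Define
$$
\mathcal{D}_{A_\e} u(x,y) := \frac{u(x) - e^{\imath(x-y)\cdot A_\e(\frac{x+y}{2})}u(y)}{|x-y|^{(3+2s)/2}},
$$
so that $[u]_{A_\e}^2 = \|\mathcal{D}_{A_\e} u\|_{L^2(\R^6)}^2$, and consider the isometric embedding
$$
\Phi : H^s_\e \longrightarrow L^2(\R^3, V_\e\, dx) \times L^2(\R^6), \qquad \Phi(u) := (u, \mathcal{D}_{A_\e} u).
$$
Given a Cauchy sequence $(u_n)$ in $H^s_\e$, the image $(\Phi u_n)$ is Cauchy in the product Hilbert space, hence converges to some $(u,W)$. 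Passing to a subsequence $u_n \to u$ a.e.\ in $\R^3$, and since the phase factor $e^{\imath(x-y)\cdot A_\e(\frac{x+y}{2})}$ is continuous, $\mathcal{D}_{A_\e}u_n(x,y) \to \mathcal{D}_{A_\e}u(x,y)$ for a.e.\ $(x,y)$. Uniqueness of a.e.\ limits forces $W = \mathcal{D}_{A_\e}u$, so $u\in H^s_\e$ and $u_n\to u$ in the norm $\|\cdot\|_\e$.

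\textbf{Density: truncation.} Fix $\eta \in C^\infty_c(\R^3)$ with $\eta\equiv 1$ on $B_1$, $0\le \eta\le 1$, and set $\eta_R(x):=\eta(x/R)$. I claim $\eta_R u \to u$ in $H^s_\e$ as $R\to\infty$. Convergence in $L^2(\R^3, V_\e\,dx)$ is immediate by dominated convergence. For the magnetic part, I write, with $\Theta := (x-y)\cdot A_\e(\tfrac{x+y}{2})$,
$$
(1-\eta_R(x))u(x) - e^{\imath\Theta}(1-\eta_R(y))u(y) = (1-\eta_R(x))\bigl[u(x)-e^{\imath\Theta}u(y)\bigr] - (\eta_R(x)-\eta_R(y))e^{\imath\Theta}u(y).
$$
Inserting this into the double integral and using $(a+b)^2\le 2a^2+2b^2$, the first piece vanishes by dominated convergence (integrand bounded by the $L^1(\R^6,d\mu)$ function $4|\mathcal{D}_{A_\e}u|^2$ and pointwise tending to $0$). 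The second piece is controlled by the standard Gagliardo estimate
$$
\iint_{\R^6} \frac{|\eta_R(x)-\eta_R(y)|^2|u(y)|^2}{|x-y|^{3+2s}}\,dxdy \longrightarrow 0,
$$
which follows from $\|\nabla \eta_R\|_\infty = O(R^{-1})$ together with the tail decay of $|u|$ in $L^2$.

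\textbf{Density: mollification.} For $u\in H^s_\e$ compactly supported, let $u_\delta := \rho_\delta * u$ with $\rho_\delta$ a standard mollifier; then $u_\delta \in C^\infty_c(\R^3,\C)$ for small $\delta$, and $u_\delta \to u$ in $L^2(\R^3,V_\e\,dx)$. For the magnetic seminorm I split
$$
u_\delta(x) - e^{\imath\Theta}u_\delta(y) = \int \rho_\delta(z)\bigl[u(x-z)-e^{\imath\Theta_z}u(y-z)\bigr]\,dz + \int \rho_\delta(z)u(y-z)\bigl[e^{\imath\Theta_z}-e^{\imath\Theta}\bigr]\,dz,
$$
where $\Theta_z := (x-y)\cdot A_\e(\tfrac{x+y}{2}-z)$. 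The first term, by Jensen's inequality, contributes at most $\rho_\delta * [u]^2_{A_\e}$, which converges in $L^1$ to $[u]^2_{A_\e}$ and by a Vitali-type argument produces the right limit. The second term is controlled using $|e^{\imath\Theta_z}-e^{\imath\Theta}|\le |x-y|\,|A_\e(\tfrac{x+y}{2}-z)-A_\e(\tfrac{x+y}{2})| \le C|x-y|\,|z|^\alpha$ from the H\"older assumption on $A$; this extra factor $|x-y|^2$ combined with the weight $|x-y|^{-(3+2s)}$ yields a locally integrable kernel (since $u$ has compact support we can restrict $|x-y|$), and the factor $|z|^{2\alpha}$ ensures the bound vanishes as $\delta\to 0$.

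\textbf{Main obstacle.} The delicate step is the mollification, specifically showing that the phase factor $e^{\imath\Theta}$ (which depends on both $x$ and $y$) commutes approximately with convolution in $z$. This is exactly where the H\"older continuity of $A$ is used; without it one cannot close the estimate on the commutator term. Once this is handled, combining the two density steps with the completeness argument yields the lemma.
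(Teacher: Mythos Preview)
The paper does not give its own proof of this lemma; it is simply quoted from \cite{AD,DS}. So there is no ``paper's approach'' to compare with, and your task was really to supply a self-contained argument. Your completeness proof and your truncation step are correct and standard (the tail estimate for the cutoff term is exactly the computation the paper carries out in detail in the proof of Lemma~\ref{PSc}, equations \eqref{PS3}--\eqref{Pa5}).

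There are, however, two genuine imprecisions in your mollification step. First, the sentence ``since $u$ has compact support we can restrict $|x-y|$'' is false: in the commutator term $\int\rho_\delta(z)u(y-z)\bigl[e^{\imath\Theta_z}-e^{\imath\Theta}\bigr]\,dz$ only $y$ is confined near $\supp(u)$, while $x$ ranges over all of $\R^3$, so $|x-y|$ is unbounded and the kernel $|x-y|^{2-3-2s}$ is \emph{not} integrable at infinity in dimension $3$ when $s<1$. The fix is to split $\{|x-y|<R\}$ versus $\{|x-y|\ge R\}$: on the far set use the crude bound $|e^{\imath\Theta_z}-e^{\imath\Theta}|\le 2$ to get a contribution $\le C\|u\|_{L^2}^2 R^{-2s}$, on the near set use your H\"older estimate to get $\le C\delta^{2\alpha}\|u\|_{L^2}^2 R^{2-2s}$, and then optimise $R=R(\delta)\to\infty$ (any $0<\beta<\alpha/(1-s)$ with $R=\delta^{-\beta}$ works). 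This is precisely the mechanism used in the proof of Lemma~\ref{AMlem1} (see \eqref{Ye}--\eqref{Ye22}).

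Second, your treatment of the first term is too vague. Jensen gives the \emph{constant} bound $\iint|T_1|^2/|x-y|^{3+2s}\le [u]^2_{A_\e}$, not a quantity that ``converges'' to $[u]^2_{A_\e}$, and a Vitali argument is not what closes the proof. What your decomposition actually yields is $\limsup_{\delta\to0}[u_\delta]_{A_\e}\le [u]_{A_\e}$; combine this with $u_\delta\rightharpoonup u$ weakly in $H^s_\e$ (from $L^2$-convergence plus boundedness) and weak lower semicontinuity of the seminorm to get $[u_\delta]_{A_\e}\to[u]_{A_\e}$, and then invoke the Radon--Riesz property of Hilbert spaces to conclude $u_\delta\to u$ strongly. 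With these two corrections your argument is complete.
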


\begin{lem}\label{DI}\cite{DS}
If $u\in H^{s}_{A}(\R^{3}, \C)$ then $|u|\in H^{s}(\R^{3}, \R)$ and we have
$$
[|u|]\leq [u]_{A}.
$$
\end{lem}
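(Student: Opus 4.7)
The plan is to reduce the seminorm inequality $[|u|] \leq [u]_A$ to a pointwise estimate between the two integrands and then integrate. The core pointwise claim I would aim to establish is
\[
\bigl| |u(x)| - |u(y)| \bigr| \leq \bigl| u(x) - e^{\imath (x-y)\cdot A(\tfrac{x+y}{2})} u(y) \bigr|
\]
for almost every $(x,y) \in \R^3 \times \R^3$. Once this is in hand, squaring, dividing by $|x-y|^{3+2s}$ and integrating over $\R^6$ gives $[|u|]^{2} \leq [u]_{A}^{2}$ at once.

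The observation that reduces the pointwise inequality to a one-line computation is that the magnetic phase is a unimodular factor. Indeed, for any $a,b \in \C$ and any $\phi \in \R$, since $|e^{\imath\phi} b| = |b|$, the reverse triangle inequality applied to $a$ and $e^{\imath\phi} b$ gives
\[
\bigl| |a| - |b| \bigr| = \bigl| |a| - |e^{\imath\phi} b| \bigr| \leq \bigl| a - e^{\imath\phi} b \bigr|.
\]
Choosing $a = u(x)$, $b = u(y)$ and $\phi = (x-y)\cdot A(\tfrac{x+y}{2})$ is exactly the desired pointwise bound. Measurability of the right-hand side in $(x,y)$ is automatic from the Hölder continuity of $A$ and measurability of $u$, so the Tonelli-type passage to the $\R^6$-integral is legitimate.

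Finally, to conclude $|u| \in H^{s}(\R^{3}, \R)$ I would also need $|u| \in L^{2}(\R^{3}, \R)$, but this is immediate because $u \in H^{s}_{A}(\R^{3}, \C)$ contains the requirement $u \in L^{2}(\R^{3}, \C)$ and $\||u|\|_{L^{2}} = \|u\|_{L^{2}}$. I do not anticipate a genuine obstacle here: the whole content is the remark that the unimodular magnetic phase factor disappears under absolute values, so the nonlocal diamagnetic inequality is nothing more than the transcription of the classical one, $|\nabla |u|| \leq |(\nabla - \imath A) u|$, to the fractional Gagliardo setting.
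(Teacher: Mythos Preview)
Your argument is correct and is precisely the standard proof: the pointwise bound $\bigl||u(x)|-|u(y)|\bigr|\le\bigl|u(x)-e^{\imath(x-y)\cdot A(\frac{x+y}{2})}u(y)\bigr|$ via the reverse triangle inequality for the unimodular phase, followed by integration. The paper does not give its own proof of this lemma but merely cites \cite{DS}, where the argument is exactly the one you describe.
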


\begin{thm}\label{Sembedding}\cite{DS}
	The space $H^{s}_{\e}$ is continuously embedded in $L^{r}(\R^{3}, \C)$ for $r\in [2, 2^{*}_{s}]$, and compactly embedded in $L_{\rm loc}^{r}(\R^{3}, \C)$ for $r\in [1, 2^{*}_{s})$.\\
\end{thm}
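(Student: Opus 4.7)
The plan is to reduce the complex magnetic setting to the real fractional Sobolev setting by two distinct tricks: the diamagnetic inequality (for the continuous embedding) and a cutoff argument (for the local compact embedding).

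For the continuous embedding, I would begin with the diamagnetic inequality (Lemma \ref{DI}): if $u\in H^s_\e$, then $|u|\in H^s(\R^3,\R)$ with $[|u|]\leq [u]_{A_\e}$. Combined with hypothesis $(V_1)$, which gives $V_\e(x)\geq V_0>0$, one obtains
\[
\| |u| \|_{H^s}^{2} = [|u|]^{2}+\||u|\|_{L^{2}}^{2} \leq [u]_{A_\e}^{2} + \tfrac{1}{V_0}\int_{\R^{3}}V_\e(x)|u|^{2}\,dx \leq C\,\|u\|_\e^{2}.
\]
Invoking the standard Sobolev embedding $H^{s}(\R^{3},\R)\hookrightarrow L^{r}(\R^{3},\R)$ for $r\in[2,2^{*}_{s}]$ (as recalled in Section~2), and noting that $\|u\|_{L^{r}(\R^{3},\C)}=\| |u| \|_{L^{r}(\R^{3},\R)}$, gives $\|u\|_{L^{r}}\leq C\|u\|_\e$.

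For the local compact embedding, the diamagnetic inequality alone does not suffice since it only controls $|u_n|$, not $u_n$. Instead, I would show that multiplication by a real compactly supported cutoff removes the magnetic phase. Fix a compact $K\subset\R^{3}$ and pick $\phi\in C^\infty_{c}(\R^{3},\R)$ with $\phi\equiv 1$ on $K$ and $\supp\phi\subset B_{R}$. Writing
\[
\phi(x)u(x)-\phi(y)u(y)=\bigl[\phi(x)-\phi(y)\bigr]u(x)+\phi(y)\bigl[u(x)-e^{\imath(x-y)\cdot A_\e(\frac{x+y}{2})}u(y)\bigr]+\phi(y)\bigl[e^{\imath(x-y)\cdot A_\e(\frac{x+y}{2})}-1\bigr]u(y),
\]
the ordinary Gagliardo seminorm $[\phi u]^{2}$ splits into three integrals. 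The first is bounded by $C\|u\|_{L^{2}}^{2}$ using $|\phi(x)-\phi(y)|\leq C\min(|x-y|,1)$ and integrability of $\min(|z|^{2},1)|z|^{-3-2s}$ on $\R^{3}$. The second is bounded by $\|\phi\|_{\infty}^{2}[u]_{A_\e}^{2}$. The third uses the elementary estimate $|e^{\imath\tau}-1|^{2}\leq\min(\tau^{2},4)$ together with the fact that, since $A$ is H\"older continuous and $\supp\phi$ is bounded, $|A_\e|$ is bounded on the region where $\phi(y)\neq 0$, so a change of variable $z=x-y$ reduces the estimate to $\int_{B_{R}}|u(y)|^{2}\bigl(\int_{\R^{3}}\min(M^{2}|z|^{2},4)|z|^{-3-2s}dz\bigr)dy\leq C\|u\|_{L^{2}}^{2}$. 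Adding these yields $\|\phi u\|_{H^{s}(\R^{3},\C)}\leq C\|u\|_\e$.

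Now, given a bounded sequence $(u_{n})$ in $H^{s}_\e$, the sequence $(\phi u_{n})$ is bounded in $H^{s}(\R^{3},\C)$, compactly supported in $B_{R}$. Applying the classical Rellich--Kondrachov theorem (to real and imaginary parts separately), I extract a subsequence converging strongly in $L^{r}(\R^{3},\C)$ for every $r\in[1,2^{*}_{s})$; since $\phi\equiv 1$ on $K$, this gives $u_{n}\to u$ in $L^{r}(K,\C)$, and a diagonal argument over an exhaustion by compacts yields convergence in $L^{r}_{\rm loc}(\R^{3},\C)$.

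The step I expect to be the main technical obstacle is the third cutoff term involving $|e^{\imath(x-y)\cdot A_\e(\frac{x+y}{2})}-1|$: one must carefully exploit the local boundedness of $A$ (via H\"older continuity) together with the cutoff to prevent divergence both near $z=0$ (handled by $s<1$ giving $\int_{|z|<1}|z|^{1-2s}dz<\infty$) and near infinity (handled by the saturation $|e^{\imath\tau}-1|^{2}\leq 4$ giving $\int_{|z|>1}|z|^{-3-2s}dz<\infty$). Everything else is then routine.
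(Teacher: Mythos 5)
Your argument is correct, and it is essentially the standard proof: the paper itself does not prove this statement (it is quoted from \cite{DS}), and the route you take --- diamagnetic inequality plus $(V_1)$ for the continuous embedding, a cutoff that strips the magnetic phase plus Rellich--Kondrachov for the local compact embedding --- is exactly the argument of d'Avenia--Squassina. One sentence in your third-term estimate is imprecisely stated, though your closing paragraph shows you know the fix: it is not true that ``$|A_\e|$ is bounded on the region where $\phi(y)\neq 0$,'' because the potential is evaluated at the midpoint $\frac{x+y}{2}$, which escapes every compact set as $x\to\infty$ even when $y\in B_R$ (recall $A$ is only H\"older continuous, hence may grow like $|z|^{\alpha}$). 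The correct statement is that the pointwise bound $|e^{\imath(x-y)\cdot A_\e(\frac{x+y}{2})}-1|^{2}\leq M^{2}|x-y|^{2}$ is only invoked on the region $|x-y|<1$, $y\in B_R$, where $\frac{x+y}{2}\in B_{R+1}$ and $M=\sup_{B_{R+1}}|A_\e|<\infty$ by continuity; on $|x-y|\geq 1$ one uses the saturation bound $4$, for which no information on $A$ is needed. With that split made explicit, the integral $\int_{|z|<1}M^{2}|z|^{-1-2s}\,dz+\int_{|z|\geq 1}4|z|^{-3-2s}\,dz$ is finite and the rest of your proof goes through.
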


\begin{lem}\label{aux}\cite{AD}
If $u\in H^{s}(\R^{3}, \R)$ and $u$ has compact support, then $w=e^{\imath A(0)\cdot x} u \in \h$.
\end{lem}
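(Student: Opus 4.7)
The plan is to verify directly the two defining requirements of $\h$ for $w=e^{\imath A(0)\cdot x}u$: finiteness of the weighted $L^{2}$ integral with weight $V_{\e}$, and finiteness of the magnetic Gagliardo seminorm $[w]_{A_{\e}}$. The first is immediate, since $|w(x)|=|u(x)|$, $u$ is supported in some compact set $K\subset\R^{3}$, and $V$ is continuous and hence bounded on the compact set $\e K$, so $\int_{\R^{3}} V_{\e}(x)|w|^{2}\,dx\leq(\sup_{\e K} V)\|u\|_{L^{2}}^{2}<\infty$.

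For the seminorm, a short algebraic manipulation based on $e^{\imath A(0)\cdot y}=e^{\imath A(0)\cdot x}e^{-\imath A(0)\cdot(x-y)}$ yields
\[
w(x)-e^{\imath(x-y)\cdot A_{\e}(\frac{x+y}{2})}w(y)=e^{\imath A(0)\cdot x}\bigl[u(x)-e^{\imath\theta_{\e}(x,y)}u(y)\bigr],
\]
where $\theta_{\e}(x,y):=(x-y)\cdot[A_{\e}(\tfrac{x+y}{2})-A(0)]$. Combined with the elementary inequality $|a-e^{\imath\theta}b|^{2}\leq 2|a-b|^{2}+2|b|^{2}|1-e^{\imath\theta}|^{2}$ valid for $a,b\in\C$ and $\theta\in\R$, the task reduces to showing that
\[
I_{\e}:=\iint_{\R^{6}}\frac{|u(y)|^{2}\,|1-e^{\imath\theta_{\e}(x,y)}|^{2}}{|x-y|^{3+2s}}\,dx\,dy<\infty,
\]
since $[u]^{2}$ is finite by assumption.

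I would estimate $I_{\e}$ by splitting the integration domain into $|x-y|\leq 1$ and $|x-y|>1$. On the outer region the trivial bound $|1-e^{\imath\theta}|^{2}\leq 4$ and Fubini control the contribution by $4\|u\|_{L^{2}}^{2}\int_{|z|>1}|z|^{-(3+2s)}\,dz<\infty$. On the inner region I would use $|1-e^{\imath\theta}|\leq|\theta|$ together with the $\alpha$-Hölder continuity of $A$, which gives $|\theta_{\e}(x,y)|\leq C\e^{\alpha}|x-y|\,|\tfrac{x+y}{2}|^{\alpha}$. Since $u(y)=0$ for $y\notin K$ and $|x-y|\leq 1$ confines the relevant $x$ to a bounded enlargement of $K$, the factor $|(x+y)/2|^{2\alpha}$ is uniformly bounded on the effective region, so this piece is controlled by
\[
C\e^{2\alpha}\int_{K}|u(y)|^{2}\,dy\int_{|z|\leq 1}\frac{dz}{|z|^{1+2s}}<\infty,
\]
the $z$-integral converging because $1+2s<3$. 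Summing the two pieces gives $I_{\e}<\infty$ and hence $w\in\h$. The only mildly delicate step is the bookkeeping in the inner region: the compact support of $u$ is used precisely to prevent the factor $|(x+y)/2|^{\alpha}$ coming from the Hölder estimate from spoiling integrability at infinity.
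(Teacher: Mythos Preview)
Your argument is correct. The paper does not actually supply a proof of this lemma; it simply states it with a citation to \cite{AD}. Your verification of the two defining conditions of $\h$ is valid: the weighted $L^{2}$ bound is immediate from compact support and continuity of $V$, and your algebraic identity and the splitting of $I_{\e}$ into the regions $|x-y|>1$ and $|x-y|\leq 1$ handle the magnetic seminorm cleanly. The only point to double-check is the convergence of $\int_{|z|\leq 1}|z|^{-(1+2s)}\,dz$, which holds precisely because $1+2s<3$, i.e.\ $s<1$---so this is fine in the present setting $s\in(\tfrac{3}{4},1)$.

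Although the paper omits a proof here, the same mechanism appears in the proof of Lemma~\ref{AMlem1}, where the authors show $[w_{\e}]^{2}_{A_{\e}}\to[w]^{2}$ by expanding the magnetic seminorm, isolating a term $X_{\e}$ identical in structure to your $I_{\e}$, and bounding it via the same splitting (there with threshold $\e^{-\beta}$ instead of $1$) and the same H\"older estimate on $A$. Your use of compact support to control the factor $|(x+y)/2|^{2\alpha}$ on the near-diagonal region is exactly what replaces the decay estimate~\eqref{remdecay} employed in that lemma, where the test function is not compactly supported. So your approach is essentially the one the paper (and presumably \cite{AD}) would take.
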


\noindent
We also recall a fractional version of Lions lemma whose proof can be found in \cite{FQT}:
\begin{lem}\label{Lions}\cite{FQT}
	Let $q\in [2, \2)$. If $(u_{n})$ is a bounded sequence in $H^{s}(\R^{3}, \R)$ and if 
	\begin{equation*}
	\lim_{n\rightarrow \infty} \sup_{y\in \R^{3}} \int_{B_{R}(y)} |u_{n}|^{q} dx=0
	\end{equation*}
	for some $R>0$, then $u_{n}\rightarrow 0$ in $L^{r}(\R^{3}, \R)$ for all $r\in (2, 2^{*}_{s})$.
\end{lem}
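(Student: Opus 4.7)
My plan is a covering argument combined with local Hölder interpolation and the local fractional Sobolev embedding. First, I would reduce to $q=2$: since $B_{R}(y)$ has fixed finite measure and $q\geq 2$, Hölder's inequality gives $\|u_{n}\|_{L^{2}(B_{R}(y))}^{2}\leq |B_{R}|^{1-2/q}\|u_{n}\|_{L^{q}(B_{R}(y))}^{2}$, so the vanishing hypothesis is inherited and $S_{n}:=\sup_{y\in\R^{3}}\|u_{n}\|_{L^{2}(B_{R}(y))}^{2}\to 0$. For fixed $r\in(2,\2)$, let $\lambda\in(0,1)$ be the Lyapunov exponent with $\tfrac{1}{r}=\tfrac{1-\lambda}{2}+\tfrac{\lambda}{\2}$; a direct computation gives $(1-\lambda)r\in(0,2)$ and $\lambda r\in(0,\2)$, with $\lambda r\geq 2$ iff $r\geq 4-4/\2$.

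The central local estimate combines Lyapunov with the fractional Sobolev embedding $H^{s}(B_{R}(y))\hookrightarrow L^{\2}(B_{R}(y))$ (valid on each ball with a uniform constant, since all balls are translates):
\[
\|u\|_{L^{r}(B_{R}(y))}^{r}\leq \|u\|_{L^{2}(B_{R}(y))}^{(1-\lambda)r}\|u\|_{L^{\2}(B_{R}(y))}^{\lambda r}\leq C\,\|u\|_{L^{2}(B_{R}(y))}^{(1-\lambda)r}E_{y}^{\lambda r/2},
\]
where $E_{y}:=\|u\|_{L^{2}(B_{R}(y))}^{2}+\iint_{B_{R}(y)\times B_{R}(y)}\frac{|u(x)-u(z)|^{2}}{|x-z|^{3+2s}}\,dx\,dz$. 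I would then cover $\R^{3}$ by balls $B_{R}(y_{i})$ with bounded overlap $N_{0}$; a standard argument using $\sum_{i}\chi_{B_{R}(y_{i})}\leq N_{0}$ yields both $\sum_{i}E_{y_{i}}\leq N_{0}\|u\|_{H^{s}(\R^{3})}^{2}$ and $E_{y_{i}}\leq \|u\|_{H^{s}(\R^{3})}^{2}$. In the regime $r\geq 4-4/\2$, I split $E_{y_{i}}^{\lambda r/2}\leq E_{y_{i}}\|u\|_{H^{s}(\R^{3})}^{\lambda r-2}$ and bound $\|u\|_{L^{2}(B_{R}(y_{i}))}^{(1-\lambda)r}\leq S_{n}^{(1-\lambda)r/2}$; summing over $i$ gives
\[
\|u_{n}\|_{L^{r}(\R^{3})}^{r}\leq C\, \|u_{n}\|_{H^{s}(\R^{3})}^{\lambda r}\,S_{n}^{(1-\lambda)r/2}\longrightarrow 0,
\]
since $\|u_{n}\|_{H^{s}(\R^{3})}$ is bounded and $(1-\lambda)r/2>0$.

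For the residual range $r\in(2,4-4/\2)$, I would pick $r_{0}\in(4-4/\2,\2)$ and apply the global Lyapunov inequality $\|u_{n}\|_{L^{r}(\R^{3})}\leq \|u_{n}\|_{L^{2}(\R^{3})}^{1-\theta}\|u_{n}\|_{L^{r_{0}}(\R^{3})}^{\theta}$ with the appropriate $\theta\in(0,1)$, using $L^{2}$-boundedness and the previous step to conclude. The delicate point of the proof is the exponent bookkeeping in the good regime: the Sobolev bound has to be written in the combined form $E_{y}^{\lambda r/2}$ so that, after peeling off the bounded factor $\|u\|_{H^{s}(\R^{3})}^{\lambda r-2}$, exactly one $E_{y_{i}}$-factor remains and sums to a global $H^{s}$-norm, leaving a genuinely positive power of the vanishing $S_{n}$ in the final bound. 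A naive use of the global embedding $\|u\|_{L^{\2}(\R^{3})}\leq C\|u\|_{H^{s}(\R^{3})}$ in place of the local one would fail, because the resulting sum $\sum_{i}\|u\|_{L^{2}(B_{R}(y_{i}))}^{(1-\lambda)r}$ would no longer be controllable.
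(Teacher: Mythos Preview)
The paper does not prove this lemma; it merely cites \cite{FQT} for the proof, so there is nothing in the paper to compare against directly. Your argument is correct and is essentially the standard Lions concentration--compactness proof transported to the fractional setting, which is what \cite{FQT} does as well.

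A small stylistic remark on the structure: the customary version of the argument fixes a single exponent $r_{0}$ for which $\lambda r_{0}=2$ exactly (your threshold $r_{0}=4-4/\2$), carries out the covering-and-summation only at that exponent, and then interpolates on \emph{both} sides between $L^{2}$ and $L^{\2}$ via the global bound $\|u_{n}\|_{L^{r_{0}}(\R^{3})}\to 0$. Your split into the range $\lambda r\geq 2$ (where you peel off $E_{y_{i}}^{\lambda r/2-1}$ as a bounded constant) and the residual range (handled by interpolation) is equally valid and arguably more transparent about why the summation closes, but it is slightly more work than necessary. Also, ``Lyapunov inequality'' is nonstandard terminology here; the usual name is the interpolation (or H\"older) inequality for Lebesgue norms.
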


\section{variational setting and the modified functional}
Let us introduce the following functional $J_{\e}: \h\rightarrow \R$ defined as
\begin{align*}
J_{\e}(u)=\frac{1}{2}\|u\|_{\e}^{2}+\frac{b}{4}[u]_{A_{\e}}^{4}-\frac{1}{2}\int_{\R^{3}} G(\e x, |u|^{2})\, dx.
\end{align*}
It is easy to check that $J_{\e}\in C^{1}(\h, \R)$ and that its differential $J_{\e}'$ is given by
\begin{align*}
\langle J_{\e}'(u), v\rangle &=\langle u, v\rangle_{\e}+b[u]^{2}_{A_{\e}} \Re\iint_{\R^{6}} \frac{(u(x)-e^{\imath(x-y)\cdot A_{\e}(\frac{x+y}{2})} u(y))\overline{(v(x)-e^{\imath(x-y)\cdot A_{\e}(\frac{x+y}{2})}v(y))}}{|x-y|^{3+2s}} dx dy \\
&-\Re\int_{\R^{3}} g(\e x, |u|^{2})u\bar{v} dx.
\end{align*}
Therefore, weak solutions to (\ref{MPe}) can be found as critical points of $J_{\e}$.
We will also consider the following family of autonomous problems associated to \eqref{MPe}, that is for all $\mu>0$
\begin{equation}\label{AP0}
(a+b[u]^{2})(-\Delta)^{s} u + \mu u=  f(u^{2})u \mbox{ in } \R^{3}, 
\end{equation}
and we introduce the  corresponding energy functional $J_{\mu}: H^{s}_{\mu}\rightarrow \R$ given by
\begin{align*}
J_{\mu}(u)
=\frac{1}{2}\|u\|^{2}_{\mu}+\frac{b}{4}[u]^{4}-\frac{1}{2}\int_{\R^{3}} F(u^{2})\, dx
\end{align*}
where $H^{s}_{\mu}$ stands for the fractional Sobolev space $H^{s}(\R^{3}, \R)$ endowed with the norm 
$$\|u\|_{\mu}^{2}=a[u]^{2}+\mu\|u\|^{2}_{L^{2}(\R^{3})}.$$ 

\noindent
We stress that, under the assumptions on $f$, $J_{\e}$ possesses a mountain pass geometry \cite{AR}. Indeed, we can prove that:
\begin{lem}\label{MPG}
\begin{compactenum}[$(i)$]
\item $J_{\e}(0)=0$;
\item there exists $\alpha, \rho>0$ such that $J_{\e}(u)\geq \alpha$ for any $u\in \h$ such that $\|u\|_{\e}=\rho$;
\item there exists $e\in \h$ with $\|e\|_{\e}>\rho$ such that $J_{\e}(e)<0$.
\end{compactenum}
\end{lem}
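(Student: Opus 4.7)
The plan is to verify the three items in succession, using conditions $(g_1)$-$(g_3)$ on the penalized nonlinearity together with the diamagnetic inequality (Lemma \ref{DI}) to reduce all estimates to the scalar Sobolev space $H^{s}(\R^{3},\R)$.

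Item $(i)$ is immediate since $G(\e x,0)=0$. For $(ii)$, I would first note that $(g_1)$, $(g_2)$ and $(f_2)$ together provide, for every $\eta>0$, a constant $C_{\eta}>0$ with
\[
g(x,t)\le \eta\,t+C_{\eta}\,t^{(q-2)/2}\qquad\forall\,t\ge 0,\ x\in\R^{3},
\]
and hence, after integrating in $t$ and taking $t=|u|^{2}$,
\[
G(\e x,|u|^{2})\le \tfrac{\eta}{2}\,|u|^{4}+\tfrac{2C_{\eta}}{q}\,|u|^{q}.
\]
Since $s>\tfrac{3}{4}$ we have $4<\2$, so Theorem \ref{Sembedding} together with Lemma \ref{DI} provides constants $C_{r}>0$ with $\|u\|_{L^{r}(\R^{3})}=\| |u|\|_{L^{r}(\R^{3})}\le C_{r}\|u\|_{\e}$ for $r\in\{2,4,q\}$. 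Substituting into $J_{\e}$ yields
\[
J_{\e}(u)\ge \tfrac{1}{2}\|u\|_{\e}^{2}+\tfrac{b}{4}[u]_{A_{\e}}^{4}-C\,\eta\,\|u\|_{\e}^{4}-C_{\eta}'\,\|u\|_{\e}^{q},
\]
and since both $4,q>2$, fixing $\eta=1$ and choosing $\rho>0$ small enough gives $J_{\e}(u)\ge \alpha:=\rho^{2}/4$ whenever $\|u\|_{\e}=\rho$.

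For $(iii)$, I would pick once and for all a real-valued $\phi\in C^{\infty}_{c}(\R^{3},\R)$ with $\phi\ge 0$, $\phi\not\equiv 0$ and $\e\,\supp(\phi)\subset \Lambda$, so that $g(\e x,\cdot)=f(\cdot)$ on $\supp(\phi)$ and $G(\e x,|t\phi|^{2})=F(t^{2}\phi^{2})$; such a $\phi$, viewed as complex-valued, belongs to $\h$ (its magnetic Gagliardo seminorm is finite because $A_{\e}$ is bounded on the compact set $\supp(\phi)$ and one controls the integrand by $|\phi(x)-\phi(y)|^{2}+|x-y|^{2}|A_{\e}|^{2}\phi(y)^{2}$). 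Integration of the Ambrosetti-Rabinowitz condition $(f_{3})$ produces $c_{1},c_{2}>0$ with
\[
F(\tau)\ge c_{1}\,\tau^{\theta/2}-c_{2}\qquad\forall\,\tau\ge 0,
\]
so that
\[
J_{\e}(t\phi)\le \tfrac{t^{2}}{2}\|\phi\|_{\e}^{2}+\tfrac{b\,t^{4}}{4}[\phi]_{A_{\e}}^{4}-\tfrac{c_{1}}{2}\,t^{\theta}\|\phi\|_{L^{\theta}}^{\theta}+\tfrac{c_{2}}{2}\,|\supp\phi|,
\]
which tends to $-\infty$ as $t\to+\infty$. Taking $e=t\phi$ with $t$ large completes the proof.

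The only genuinely non-trivial point is that the Kirchhoff correction contributes a quartic term $\tfrac{b}{4}t^{4}[\phi]_{A_{\e}}^{4}$ to $J_{\e}(t\phi)$; it is precisely to dominate this term, rather than only the quadratic part, that $(f_{3})$ imposes $\theta>4$ instead of the usual $\theta>2$ of the non-Kirchhoff setting. The hypothesis $s>3/4$ enters only cosmetically in $(ii)$, through the requirement $4<\2$ needed to control the $L^{4}$-norm arising from the quartic growth in $|u|$ produced by the argument $|u|^{2}$ of $G$.
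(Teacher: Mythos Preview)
Your proof is correct and follows essentially the same route as the paper: for $(ii)$ both use the growth estimate $G(\e x,t^{2})\le \delta t^{4}+C_{\delta}t^{q}$ together with the Sobolev embedding (Theorem~\ref{Sembedding}) to get $J_{\e}(u)\ge c\|u\|_{\e}^{2}-C\|u\|_{\e}^{4}-C'\|u\|_{\e}^{q}$, and for $(iii)$ both test along a ray $t\mapsto t\phi$ with $\supp(\phi)\subset\Lambda_{\e}$ and invoke $(f_{3})$ with $\theta>4$ to beat the quartic Kirchhoff term. One small remark on your construction in $(iii)$: the bound you sketch for $[\phi]_{A_{\e}}$ involves $A_{\e}\bigl(\tfrac{x+y}{2}\bigr)$, and the midpoint need not lie in $\supp(\phi)$ when only one of $x,y$ does; the cleanest fix is either to split into $|x-y|<1$ and $|x-y|\ge1$ (so that on the near set the midpoint stays in a fixed compact neighbourhood of $\supp(\phi)$), or simply to take $\phi=e^{\imath A(0)\cdot x}\psi$ with $\psi\in C^{\infty}_{c}(\R^{3},\R)$ and appeal to Lemma~\ref{aux}, which is what the paper implicitly relies on.
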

\begin{proof}
By $(g_1)$ and $(g_2)$, for all $\delta>0$ there exists $C_{\delta}>0$ such that 
$$
|G(\e x, t^2)|\leq \delta |t|^{4}+C_{\delta}|t|^{q} \mbox{ for all } x\in \R^{3}, t\in \R.
$$
This fact combined with Theorem \ref{Sembedding} implies that
$$
J_{\e}(u)\geq C\|u\|^{2}_{\e}-\delta C\|u\|_{\e}^{4}-C_{\delta} \|u\|^{q}_{\e}.
$$
Since $q\in (4, \2)$, it follows that $(i)$ holds.
Now, fix $u\in \h\setminus\{0\}$ with $supp(u)\subset \Lambda_{\e}$. By $(f_3)$ we get
\begin{align*}
J_{\e}(Tu)&\leq \frac{T^{2}}{2} \|u\|^{2}_{\e}+b\frac{T^{4}}{4}[u]_{A_{\e}}^{4}-\frac{1}{2}\int_{\Lambda_{\e}} F(T^{2}|u|^{2})\, dx \\
&\leq \frac{T^{2}}{2}\|u\|^{2}_{\e}+\frac{T^{4}}{4}b[u]_{A_{\e}}^{4}-CT^{\theta} \int_{\Lambda_{\e}} |u|^{\theta}\, dx+C
\end{align*}
which in view of $\theta>4$ yields $J_{\e}(Tu)\rightarrow -\infty$ as $T\rightarrow \infty$.
\end{proof}

\noindent
From Lemma \ref{MPG} it follows that we can define the minimax level 
\begin{align*}
c_{\e}=\inf_{\gamma\in \Gamma_{\e}} \max_{t\in [0, 1]} J_{\e}(\gamma(t)) \quad \mbox{ where } \quad \Gamma_{\e}=\{\gamma\in C([0, 1], \h): \gamma(0)=0 \mbox{ and } J_{\e}(\gamma(1))<0\}.
\end{align*}
Using a version of the mountain pass theorem without $(PS)$ condition (see \cite{W}), we can find a Palais-Smale sequence $(u_{n})$ at the level $c_{\e}$.
Now, we prove that $J_{\e}$ enjoys of the following compactness property:
\begin{lem}\label{PSc}
Let $c\in \R$. Then $J_{\e}$ satisfies the Palais-Smale condition at the level $c$.
\end{lem}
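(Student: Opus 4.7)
The plan is to run the standard four-step scheme for $(PS)$ in Kirchhoff-type problems, the main subtlety being that weak convergence alone does not identify $\lim[u_{n}]_{A_{\e}}^{2}$ with $[u]_{A_{\e}}^{2}$. First I would prove boundedness: for a $(PS)_{c}$ sequence $(u_{n})$, compute $J_{\e}(u_{n})-\frac{1}{\theta}\langle J_{\e}'(u_{n}),u_{n}\rangle$; using $\theta>4$ together with $(g_{3})(i)$ on $\Lambda_{\e}$ and $(g_{3})(ii)$ on $\Lambda_{\e}^{c}$ (which gives $\frac{1}{\theta}g(\e x,|u_{n}|^{2})|u_{n}|^{2}-\frac{1}{2}G(\e x,|u_{n}|^{2})\geq -\frac{1}{2k}V_{\e}(x)|u_{n}|^{2}$ on $\Lambda_{\e}^{c}$), one obtains the lower bound $(\frac{1}{2}-\frac{1}{\theta}-\frac{1}{2k})\|u_{n}\|_{\e}^{2}+b(\frac{1}{4}-\frac{1}{\theta})[u_{n}]_{A_{\e}}^{4}$. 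The choice $k>2$ in the penalization keeps the coefficient of $\|u_{n}\|_{\e}^{2}$ strictly positive, so $(u_{n})$ is bounded in $\h$ and $([u_{n}]_{A_{\e}})$ is bounded in $\R$.

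Up to a subsequence, then, $u_{n}\rightharpoonup u$ in $\h$, $|u_{n}|\to|u|$ a.e., $u_{n}\to u$ in $L_{\mathrm{loc}}^{r}(\R^{3},\C)$ for every $r\in[1,\2)$ by Theorem \ref{Sembedding}, and $[u_{n}]_{A_{\e}}^{2}\to t_{0}^{2}$ for some $t_{0}\geq 0$. Testing $\langle J_{\e}'(u_{n}),v\rangle=o(1)$ against $v\in C_{c}^{\infty}(\R^{3},\C)$, and using weak convergence on the bilinear magnetic form together with dominated convergence for the nonlinear pairing on the compact support of $v$, shows that $u$ weakly solves $(a+bt_{0}^{2})(-\Delta)_{A_{\e}}^{s}u+V_{\e}u=g(\e x,|u|^{2})u$; pairing with $u$ itself yields $(a+bt_{0}^{2})[u]_{A_{\e}}^{2}+\int_{\R^{3}}V_{\e}|u|^{2}\,dx=\int_{\R^{3}}g(\e x,|u|^{2})|u|^{2}\,dx$.

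The hard step is upgrading the local convergence to global convergence of the nonlinear pairing $\int g(\e x,|u_{n}|^{2})|u_{n}|^{2}\to \int g(\e x,|u|^{2})|u|^{2}$. Since $\Lambda$ is bounded, $\Lambda_{\e}\subset B_{R_{0}}$ for some $R_{0}>0$; for $R>R_{0}$ pick a cutoff $\eta_{R}\in C^{\infty}(\R^{3},[0,1])$ with $\eta_{R}\equiv 0$ on $B_{R}$ and $\eta_{R}\equiv 1$ outside $B_{2R}$, and test $\langle J_{\e}'(u_{n}),\eta_{R}u_{n}\rangle=o(1)$. On $\mathrm{supp}(\eta_{R})\subset\Lambda_{\e}^{c}$, $(g_{3})(ii)$ bounds the resulting nonlinear term by $\frac{1}{k}\int V_{\e}|u_{n}|^{2}\eta_{R}\,dx$; the genuinely nonlocal cross-contributions of the form $\iint_{|x|>2R,\,|y|<R}|x-y|^{-3-2s}(\cdots)\,dx\,dy$ produced when the cutoff is applied to the magnetic Gagliardo form vanish as $R\to\infty$ uniformly in $n$ by boundedness of $(u_{n})$ in $\h$. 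Absorbing the $\frac{1}{k}$-term (which is possible since $k>2$), I get $\lim_{R\to\infty}\limsup_{n}\int_{|x|>R}V_{\e}|u_{n}|^{2}\,dx=0$, which together with the local $L^{r}$-convergence upgrades to $u_{n}\to u$ in $L^{r}(\R^{3},\C)$ for every $r\in[2,\2)$; the claimed convergence of the nonlinear pairing then follows from $(g_{1})$--$(g_{2})$ on $\Lambda_{\e}$ and the penalization bound $g(\e x,|u_{n}|^{2})|u_{n}|^{2}\leq \frac{V_{\e}}{k}|u_{n}|^{2}$ outside.

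Passing to the limit in $\langle J_{\e}'(u_{n}),u_{n}\rangle=o(1)$ now gives $\lim\|u_{n}\|_{\e}^{2}+bt_{0}^{4}=\|u\|_{\e}^{2}+bt_{0}^{2}[u]_{A_{\e}}^{2}$. Combined with the weak lower-semicontinuity inequalities $\|u\|_{\e}\leq\liminf\|u_{n}\|_{\e}$ and $[u]_{A_{\e}}\leq t_{0}$, this forces both $t_{0}^{2}=[u]_{A_{\e}}^{2}$ and $\|u_{n}\|_{\e}\to\|u\|_{\e}$, which with the weak convergence yields $u_{n}\to u$ strongly in $\h$.
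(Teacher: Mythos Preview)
Your argument is correct and, although it shares Step~1 (boundedness) and the cutoff tail estimate with the paper, the way you close the proof is genuinely different and in fact more economical. The paper, after the tail estimate \eqref{T}, devotes a long Step~3 to proving that the \emph{local} magnetic Gagliardo seminorm over $B_{R}$ converges (display \eqref{SPLITbr}), by an intricate splitting into four pieces $I_{n,\rho},II_{n,\rho},III_{n,\rho},IV_{n,\rho}$; only then does it combine the local seminorm convergence with the tail estimate to conclude $\|u_{n}\|_{\e}\to\|u\|_{\e}$. You bypass this entirely: from the tail control on $\int V_{\e}|u_{n}|^{2}$ and compact local embedding you get global $L^{r}$-convergence for $r\in[2,\2)$, pass to the limit in the Nehari identity $\langle J_{\e}'(u_{n}),u_{n}\rangle=o(1)$, and then use the simple algebraic observation
\[
\lim\|u_{n}\|_{\e}^{2}-\|u\|_{\e}^{2}=bt_{0}^{2}\bigl([u]_{A_{\e}}^{2}-t_{0}^{2}\bigr)\leq 0,
\]
which together with weak lower semicontinuity forces equality. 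Your route exploits the structure of the Kirchhoff term in a way the paper does not, and it also sidesteps the long computation the paper carries out for \eqref{PS3}: indeed the cross-commutator $\iint|u_{n}(y)|^{2}|\eta_{R}(x)-\eta_{R}(y)|^{2}|x-y|^{-3-2s}\,dxdy$ is bounded by $C R^{-2s}\|u_{n}\|_{L^{2}}^{2}$ via the elementary pointwise estimate $\int_{\R^{3}}|\eta_{R}(x)-\eta_{R}(y)|^{2}|x-y|^{-3-2s}\,dx\leq CR^{-2s}$, so it vanishes uniformly in $n$ exactly as you claim. The paper's approach, on the other hand, never needs to identify $t_{0}^{2}$ with $[u]_{A_{\e}}^{2}$ or to use that $u$ solves a limiting equation; it is purely norm-based and would work verbatim for functionals without the Nehari-type identity you exploit.
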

\begin{proof}
Let $(u_{n})\subset \h$ be a $(PS)_{c}$-sequence of $J_{\e}$, that is 
$
J_{\e}(u_{n})\rightarrow c \mbox{ and } J_{\e}'(u_{n})\rightarrow 0
$
as $n\rightarrow \infty$.
Since the proof is very long, we divide it into four steps.\\
{\bf Step 1} The sequence $(u_{n})$ is bounded in $\h$. 
Indeed, by $(g_3)$ we get
\begin{align*}
c+o_{n}(1)\|u_{n}\|_{\e}&= J_{\e}(u_{n})-\frac{1}{\theta}\langle J'_{\e}(u_{n}), u_{n}\rangle \\
&= \left(\frac{1}{2}-\frac{1}{\theta}\right)\|u_{n}\|^{2}_{\e}+\left( \frac{1}{4}-\frac{1}{\theta}\right)b[u_{n}]_{A_{\e}}^{4}\\
&+\frac{1}{\theta}\int_{\R^{3}} \left[g_{\e}(x, |u_{n}|^{2})|u_{n}|^{2}-\frac{\theta}{2} G_{\e}(x, |u_{n}|^{2})\right]\, dx\\
&\geq \left(\frac{1}{2}-\frac{1}{\theta}\right)\|u_{n}\|^{2}_{\e}
+\left(\frac{2-\theta}{2\theta} \right)\int_{\Lambda^{c}_{\e}} G_{\e}(x, |u_{n}|^{2})\, dx \\
&\geq \left(\frac{1}{2}-\frac{1}{\theta}\right)\|u_{n}\|^{2}_{\e}+\left(\frac{2-\theta}{2\theta k} \right)\int_{\Lambda^{c}_{\e}} V(\e x) |u_{n}|^{2}\, dx \\
&\geq \left(\frac{\theta-2}{2\theta}\right)\left( 1-\frac{1}{k} \right)\|u_{n}\|^{2}_{\e},
\end{align*}
and using the fact that $k>2$, we can conclude that $(u_{n})$ is bounded in $\h$. 
Consequently, we may assume that $u_{n}\rightharpoonup u$ in $\h$ and $[u_{n}]_{A_{\e}}^{2}\rightarrow \ell^{2}\in (0, \infty)$. \\
Set 
$$
M_{n}:=a+b[u_{n}]_{A_{\e}}^{2},
$$
and we note that $M_{n}\rightarrow a+b\ell^{2}$ as $n\rightarrow \infty$.
In what follows we prove that $u_{n}\rightarrow u$ in $\h$. \\
{\bf Step 2} For any $\xi>0$ there exists $R=R_{\xi}>0$ such that $\Lambda_{\e}\subset B_{R}$ and
\begin{equation}\label{T}
\limsup_{n\rightarrow \infty}\int_{B_{R}^{c}} \int_{\R^{3}}a \frac{|u_{n}(x)-u_{n}(y)e^{\imath A_{\e}(\frac{x+y}{2})\cdot (x-y)}|^{2}}{|x-y|^{3+2s}} dx dy+\int_{B_{R}^{c}} V_{\e}(x)|u_{n}(x)|^{2}\, dx\leq \xi.
\end{equation}
Let $\eta_{R}\in C^{\infty}(\R^{3}, \R)$ be such that $0\leq \eta_{R}\leq 1$, $\eta_{R}=0$ in $B_{\frac{R}{2}}$, $\eta_{R}=1$ in $B_{R}^{c}$ and $|\nabla \eta_{R}|\leq \frac{C}{R}$ for some $C>0$ independent of $R$.
From $\langle J'_{\e}(u_{n}), \eta_{R}u_{n}\rangle =o_{n}(1)$ it follows that
\begin{align*}
&\Re \left[M_{n} \iint_{\R^{6}} \frac{(u_{n}(x)-u_{n}(y)e^{\imath A_{\e}(\frac{x+y}{2})\cdot (x-y)})\overline{(u_{n}(x)\eta_{R}(x)-u_{n}(y)\eta_{R}(y)e^{\imath A_{\e}(\frac{x+y}{2})\cdot (x-y)})}}{|x-y|^{3+2s}}\, dx dy \right]\\
&+\int_{\R^{3}} V_{\e}\eta_{R} |u_{n}|^{2}\, dx=\int_{\R^{N}} g_{\e}(x, |u_{n}|^{2})|u_{n}|^{2}\eta_{R}\, dx+o_{n}(1).
\end{align*}
Since
\begin{align*}
&\Re\left(\iint_{\R^{6}} \frac{(u_{n}(x)-u_{n}(y)e^{\imath A_{\e}(\frac{x+y}{2})\cdot (x-y)})\overline{(u_{n}(x)\eta_{R}(x)-u_{n}(y)\eta_{R}(y)e^{\imath A_{\e}(\frac{x+y}{2})\cdot (x-y)})}}{|x-y|^{3+2s}}\, dx dy \right)\\
&=\Re\left(\iint_{\R^{6}} \overline{u_{n}(y)}e^{-\imath A_{\e}(\frac{x+y}{2})\cdot (x-y)}\frac{(u_{n}(x)-u_{n}(y)e^{\imath A_{\e}(\frac{x+y}{2})\cdot (x-y)})(\eta_{R}(x)-\eta_{R}(y))}{|x-y|^{3+2s}}  \,dx dy\right)\\
&+\iint_{\R^{6}} \eta_{R}(x)\frac{|u_{n}(x)-u_{n}(y)e^{\imath A_{\e}(\frac{x+y}{2})\cdot (x-y)}|^{2}}{|x-y|^{3+2s}}\, dx dy,
\end{align*}
and $M_{n}\geq a$, we can use $(g_3)$-(ii) to get
\begin{align}\label{PS1}
&a\iint_{\R^{6}} \eta_{R}(x)\frac{|u_{n}(x)-u_{n}(y)e^{\imath A_{\e}(\frac{x+y}{2})\cdot (x-y)}|^{2}}{|x-y|^{3+2s}}\, dx dy+\int_{\R^{3}} V_{\e}(x)\eta_{R} |u_{n}|^{2}\, dx\nonumber\\
&\leq -\Re \left[M_{n} \iint_{\R^{6}} \overline{u_{n}(y)}e^{-\imath A_{\e}(\frac{x+y}{2})\cdot (x-y)}\frac{(u_{n}(x)-u_{n}(y)e^{\imath A_{\e}(\frac{x+y}{2})\cdot (x-y)})(\eta_{R}(x)-\eta_{R}(y))}{|x-y|^{3+2s}}  \,dx dy\right] \nonumber\\
&+\frac{1}{k}\int_{\R^{3}} V_{\e} \eta_{R} |u_{n}|^{2}\, dx+o_{n}(1).
\end{align}
Using H\"older inequality and the boundedness of $(u_{n})$ in $\h$ we obtain
\begin{align}\label{PS2}
&\left|\Re \left[M_{n}\iint_{\R^{6}} \overline{u_{n}(y)}e^{-\imath A_{\e}(\frac{x+y}{2})\cdot (x-y)}\frac{(u_{n}(x)-u_{n}(y)e^{\imath A_{\e}(\frac{x+y}{2})\cdot (x-y)})(\eta_{R}(x)-\eta_{R}(y))}{|x-y|^{3+2s}}  \,dx dy\right]\right| \nonumber\\
&\leq C\left(\iint_{\R^{6}} \frac{|u_{n}(x)-u_{n}(y)e^{\imath A_{\e}(\frac{x+y}{2})\cdot (x-y)}|^{2}}{|x-y|^{3+2s}}\,dxdy  \right)^{\frac{1}{2}} \left(\iint_{\R^{6}} |u_{n}(y)|^{2}\frac{|\eta_{R}(x)-\eta_{R}(y)|^{2}}{|x-y|^{3+2s}} \, dxdy\right)^{\frac{1}{2}} \nonumber\\
&\leq C \left(\iint_{\R^{6}} |u_{n}(y)|^{2}\frac{|\eta_{R}(x)-\eta_{R}(y)|^{2}}{|x-y|^{3+2s}} \, dxdy\right)^{\frac{1}{2}}.
\end{align}
Now, we show that
\begin{equation}\label{PS3}
\limsup_{R\rightarrow \infty}\limsup_{n\rightarrow \infty} \iint_{\R^{6}} |u_{n}(y)|^{2}\frac{|\eta_{R}(x)-\eta_{R}(y)|^{2}}{|x-y|^{3+2s}} \, dxdy=0.
\end{equation}
Let us note that  
$$
\R^{6}=((\R^{3}\setminus B_{2R})\times (\R^{3}\setminus B_{2R})) \cup ((\R^{3}\setminus B_{2R})\times B_{2R})\cup (B_{2R}\times \R^{3})=: X^{1}_{R}\cup X^{2}_{R} \cup X^{3}_{R}.
$$
Therefore
\begin{align}\label{Pa1}
&\iint_{\R^{6}}\frac{|\eta_{R}(x)-\eta_{R}(y)|^{2}}{|x-y|^{3+2s}} |u_{n}(x)|^{2} dx dy =\iint_{X^{1}_{R}}\frac{|\eta_{R}(x)-\eta_{R}(y)|^{2}}{|x-y|^{3+2s}} |u_{n}(x)|^{2} dx dy \nonumber \\
&+\iint_{X^{2}_{R}}\frac{|\eta_{R}(x)-\eta_{R}(y)|^{2}}{|x-y|^{3+2s}} |u_{n}(x)|^{2} dx dy+
\iint_{X^{3}_{R}}\frac{|\eta_{R}(x)-\eta_{R}(y)|^{2}}{|x-y|^{3+2s}} |u_{n}(x)|^{2} dx dy.
\end{align}
Since $\eta_{R}=1$ in $\R^{3}\setminus B_{2R}$, we can see that
\begin{align}\label{Pa2}
\iint_{X^{1}_{R}}\frac{|u_{n}(x)|^{2}|\eta_{R}(x)-\eta_{R}(y)|^{2}}{|x-y|^{3+2s}} dx dy=0.
\end{align}
Now, fix $K>4$, and we observe that
\begin{equation*}
X^{2}_{R}=(\R^{3} \setminus B_{2R})\times B_{2R} \subset ((\R^{3}\setminus B_{KR})\times B_{2R})\cup ((B_{KR}\setminus B_{2R})\times B_{2R}) 
\end{equation*}
If $(x, y) \in (\R^{3}\setminus B_{KR})\times B_{2R}$, then
\begin{equation*}
|x-y|\geq |x|-|y|\geq |x|-2R>\frac{|x|}{2}. 
\end{equation*}
Therefore, using $0\leq \eta_{R}\leq 1$, $|\nabla \eta_{R}|\leq \frac{C}{R}$ and applying H\"older inequality we obtain
\begin{align}\label{Pa3}
&\iint_{X^{2}_{R}}\frac{|u_{n}(x)|^{2}|\eta_{R}(x)-\eta_{R}(y)|^{2}}{|x-y|^{3+2s}} dx dy \nonumber \\
&=\int_{\R^{3}\setminus B_{KR}} \int_{B_{2R}} \frac{|u_{n}(x)|^{2}|\eta_{R}(x)-\eta_{R}(y)|^{2}}{|x-y|^{3+2s}} dx dy + \int_{B_{kR}\setminus B_{2R}} \int_{B_{2R}} \frac{|u_{n}(x)|^{2}|\eta_{R}(x)-\eta_{R}(y)|^{2}}{|x-y|^{3+2s}} dx dy \nonumber \\
&\leq 2^{2+3+2s} \int_{\R^{3}\setminus B_{KR}} \int_{B_{2R}} \frac{|u_{n}(x)|^{2}}{|x|^{3+2s}}\, dxdy+ \frac{C}{R^{2}} \int_{B_{KR}\setminus B_{2R}} \int_{B_{2R}} \frac{|u_{n}(x)|^{2}}{|x-y|^{3+2(s-1)}}\, dxdy \nonumber \\
&\leq CR^{3} \int_{\R^{3}\setminus B_{KR}} \frac{|u_{n}(x)|^{2}}{|x|^{3+2s}}\, dx + \frac{C}{R^{2}} (KR)^{2(1-s)} \int_{B_{KR}\setminus B_{2R}} |u_{n}(x)|^{2} dx \nonumber \\
&\leq CR^{3} \left( \int_{\R^{3}\setminus B_{KR}} |u_{n}(x)|^{2^{*}_{s}} dx \right)^{\frac{2}{2^{*}_{s}}} \left(\int_{\R^{3}\setminus B_{KR}}\frac{1}{|x|^{\frac{3^{2}}{2s} +3}}\, dx \right)^{\frac{2s}{3}} + \frac{C K^{2(1-s)}}{R^{2s}} \int_{B_{KR}\setminus B_{2R}} |u_{n}(x)|^{2} dx \nonumber \\
&\leq \frac{C}{K^{3}} \left( \int_{\R^{3}\setminus B_{KR}} |u_{n}(x)|^{2^{*}_{s}} dx \right)^{\frac{2}{2^{*}_{s}}} + \frac{C K^{2(1-s)}}{R^{2s}} \int_{B_{KR}\setminus B_{2R}} |u_{n}(x)|^{2} dx \nonumber \\
&\leq \frac{C}{K^{3}}+ \frac{C K^{2(1-s)}}{R^{2s}} \int_{B_{KR}\setminus B_{2R}} |u_{n}(x)|^{2} dx.
\end{align}
Take $\delta\in (0,1)$, and we obtain
\begin{align}\label{Ter1}
&\iint_{X^{3}_{R}} \frac{|u_{n}(x)|^{2} |\eta_{R}(x)- \eta_{R}(y)|^{2}}{|x-y|^{3+2s}}\, dxdy \nonumber\\
&\leq \int_{B_{2R}\setminus B_{\delta R}} \int_{\R^{3}} \frac{|u_{n}(x)|^{2} |\eta_{R}(x)- \eta_{R}(y)|^{2}}{|x-y|^{3+2s}}\, dxdy + \int_{B_{\delta R}} \int_{\R^{3}} \frac{|u_{n}(x)|^{2} |\eta_{R}(x)- \eta_{R}(y)|^{2}}{|x-y|^{3+2s}}\, dxdy. 
\end{align}
Since
\begin{align*}
\int_{B_{2R}\setminus B_{\delta R}} \int_{\R^{3} \cap \{y: |x-y|<R\}} \frac{|u_{n}(x)|^{2} |\eta_{R}(x)- \eta_{R}(y)|^{2}}{|x-y|^{3+2s}}\, dxdy \leq \frac{C}{R^{2s}} \int_{B_{2R}\setminus B_{\delta R}} |u_{n}(x)|^{2} dx
\end{align*}
and 
\begin{align*}
\int_{B_{2R}\setminus B_{\delta R}} \int_{\R^{3} \cap \{y: |x-y|\geq R\}} \frac{|u_{n}(x)|^{2} |\eta_{R}(x)- \eta_{R}(y)|^{2}}{|x-y|^{3+2s}}\, dxdy \leq \frac{C}{R^{2s}} \int_{B_{2R}\setminus B_{\delta R}} |u_{n}(x)|^{2} dx,
\end{align*}
we can see that
\begin{align}\label{Ter2}
\int_{B_{2R}\setminus B_{\delta R}} \int_{\R^{3}} \frac{|u_{n}(x)|^{2} |\eta_{R}(x)- \eta_{R}(y)|^{2}}{|x-y|^{3+2s}}\, dxdy \leq \frac{C}{R^{2s}} \int_{B_{2R}\setminus B_{\delta R}} |u_{n}(x)|^{2} dx. 
\end{align}
On the other hand, from the definition of $\eta_{R}$, $\e\in (0,1)$, and $\eta_{R}\leq 1$ we obtain
\begin{align}\label{Ter3}
\int_{B_{\delta R}} \int_{\R^{3}} \frac{|u_{n}(x)|^{2} |\eta_{R}(x)- \eta_{R}(y)|^{2}}{|x-y|^{3+2s}}\, dxdy &= \int_{B_{\delta R}} \int_{\R^{3}\setminus B_{R}} \frac{|u_{n}(x)|^{2} |\eta_{R}(x)- \eta_{R}(y)|^{2}}{|x-y|^{3+2s}}\, dxdy\nonumber \\
&\leq 4 \int_{B_{\delta R}} \int_{\R^{3}\setminus B_{R}} \frac{|u_{n}(x)|^{2}}{|x-y|^{3+2s}}\, dxdy\nonumber \\
&\leq C \int_{B_{\delta R}} |u_{n}|^{2} dx \int_{(1-\delta)R}^{\infty} \frac{1}{r^{1+2s}} dr\nonumber \\
&=\frac{C}{[(1-\delta)R]^{2s}} \int_{B_{\delta R}} |u_{n}|^{2} dx
\end{align}
where we used the fact that if $(x, y) \in B_{\delta R}\times (\R^{3} \setminus B_{R})$, then $|x-y|>(1-\delta)R$. \\
Then \eqref{Ter1}, \eqref{Ter2} and \eqref{Ter3} yield
\begin{align}\label{Pa4}
\iint_{X^{3}_{R}} &\frac{|u_{n}(x)|^{2} |\eta_{R}(x)- \eta_{R}(y)|^{2}}{|x-y|^{3+2s}}\, dxdy \nonumber\\
&\leq \frac{C}{R^{2s}} \int_{B_{2R}\setminus B_{\delta R}} |u_{n}(x)|^{2} dx + \frac{C}{[(1-\e)R]^{2s}} \int_{B_{\delta R}} |u_{n}(x)|^{2} dx. 
\end{align}
In view of \eqref{Pa1}, \eqref{Pa2}, \eqref{Pa3} and \eqref{Pa4} we can infer 
\begin{align}\label{Pa5}
\iint_{\R^{6}} &\frac{|u_{n}(x)|^{2} |\eta_{R}(x)- \eta_{R}(y)|^{2}}{|x-y|^{3+2s}}\, dxdy \nonumber \\
&\leq \frac{C}{K^{3}} + \frac{CK^{2(1-s)}}{R^{2s}} \int_{B_{KR}\setminus B_{2R}} |u_{n}(x)|^{2} dx + \frac{C}{R^{2s}} \int_{B_{2R}\setminus B_{\delta R}} |u_{n}(x)|^{2} dx \nonumber \\
&\quad+ \frac{C}{[(1-\delta)R]^{2s}}\int_{B_{\delta R}} |u_{n}(x)|^{2} dx. 
\end{align}
Since $(|u_{n}|)$ is bounded in $H^{s}(\R^{3}, \R)$, using Sobolev embedding $H^{s}(\R^{3}, \R)\subset L^{\2}(\R^{3}, \R)$ (see \cite{DPV}), we may assume that $|u_{n}|\rightarrow u$ in $L^{2}_{loc}(\R^{3}, \R)$ for some $u\in H^{s}(\R^{3}, \R)$. Letting the limit as $n\rightarrow \infty$ in \eqref{Pa5} we find
\begin{align*}
&\limsup_{n\rightarrow \infty} \iint_{\R^{6}} \frac{|u_{n}(x)|^{2} |\eta_{R}(x)- \eta_{R}(y)|^{2}}{|x-y|^{3+2s}}\, dxdy\\
&\leq \frac{C}{K^{3}} + \frac{CK^{2(1-s)}}{R^{2s}} \int_{B_{KR}\setminus B_{2R}} |u(x)|^{2} dx + \frac{C}{R^{2s}} \int_{B_{2R}\setminus B_{\delta R}} |u(x)|^{2} dx + \frac{C}{[(1-\delta)R]^{2s}}\int_{B_{\delta R}} |u(x)|^{2} dx \\
&\leq \frac{C}{K^{3}} + CK^{2} \left( \int_{B_{KR}\setminus B_{2R}} |u(x)|^{2^{*}_{s}} dx\right)^{\frac{2}{2^{*}_{s}}} + C\left(\int_{B_{2R}\setminus B_{\delta R}} |u(x)|^{2^{*}_{s}} dx\right)^{\frac{2}{2^{*}_{s}}} \nonumber \\
&\quad+ C\left( \frac{\delta}{1-\delta}\right)^{2s} \left(\int_{B_{\delta R}} |u(x)|^{2^{*}_{s}} dx\right)^{\frac{2}{2^{*}_{s}}}, 
\end{align*}
where in the last passage we used H\"older inequality. 
Since $u\in L^{2^{*}_{s}}(\R^{3}, \R)$, $K>4$ and $\delta \in (0,1)$ we can see that
\begin{align*}
\limsup_{R\rightarrow \infty} \int_{B_{KR}\setminus B_{2R}} |u(x)|^{2^{*}_{s}} dx = \limsup_{R\rightarrow \infty} \int_{B_{2R}\setminus B_{\delta R}} |u(x)|^{2^{*}_{s}} dx = 0.
\end{align*}
Thus, taking $\delta= \frac{1}{K}$, we have
\begin{align*}
&\limsup_{R\rightarrow \infty} \limsup_{n\rightarrow \infty} \iint_{\R^{6}} \frac{|u_{n}(x)|^{2} |\eta_{R}(x)- \eta_{R}(y)|^{2}}{|x-y|^{3+2s}}\, dxdy\\
&\leq \lim_{K\rightarrow \infty} \limsup_{R\rightarrow \infty} \Bigl[\, \frac{C}{K^{3}} + CK^{2} \left( \int_{B_{KR}\setminus B_{2R}} |u(x)|^{2^{*}_{s}} dx\right)^{\frac{2}{2^{*}_{s}}} + C\left(\int_{B_{2R}\setminus B_{\frac{1}{K} R}} |u(x)|^{2^{*}_{s}} dx\right)^{\frac{2}{2^{*}_{s}}} \\
&+ C\left(\frac{1}{K-1}\right)^{2s} \left(\int_{B_{\frac{1}{K} R}} |u(x)|^{2^{*}_{s}} dx\right)^{\frac{2}{2^{*}_{s}}}\, \Bigr]\\
&\leq \lim_{K\rightarrow \infty} \frac{C}{K^{3}} + C\left(\frac{1}{K-1}\right)^{2s} \left(\int_{\R^{3}} |u(x)|^{2^{*}_{s}} dx \right)^{\frac{2}{2^{*}_{s}}}= 0,
\end{align*}
which implies that \eqref{PS3} holds true.  Hence, 
putting together \eqref{PS1}, \eqref{PS2} and \eqref{PS3} we can deduce that
$$
\limsup_{R\rightarrow \infty}\limsup_{n\rightarrow \infty} \int_{B_{R}^{c}} \int_{\R^{3}}a \frac{|u_{n}(x)-u_{n}(y)e^{\imath A_{\e}(\frac{x+y}{2})\cdot (x-y)}|^{2}}{|x-y|^{3+2s}} dx dy+\int_{B_{R}^{c}} V_{\e}(x)|u_{n}(x)|^{2}\, dx=0,
$$
which yields \eqref{T}. \\
{\bf Step 3}  For all $R>0$ it holds
\begin{align}\label{SPLITbr}
\lim_{n\rightarrow \infty} &\int_{B_{R}} dx \int_{\R^{3}} \frac{|u_{n}(x)-u_{n}(y)e^{\imath A_{\e}(\frac{x+y}{2})\cdot (x-y)}|^{2}}{|x-y|^{3+2s}} \, dy + \int_{B_{R}} V_{\e} |u_{n}|^{2} dx \nonumber\\
&= \int_{B_{R}} dx \int_{\R^{3}} \frac{|u(x)-u(y)e^{\imath A_{\e}(\frac{x+y}{2})\cdot (x-y)}|^{2}}{|x-y|^{3+2s}} \, dy + \int_{B_{R}} V_{\e} |u|^{2} dx.  
\end{align}
Let $\eta_{\rho}\in C^{\infty}(\R^{3}, \R)$ be such that $\eta_{\rho}=1$ in $B_{\rho}$ and $\eta_{\rho}=0$ in $B_{2\rho}^{c}$, with $0\leq \eta_{\rho}\leq 1$. 

Set
\begin{equation*}
\Phi_{n}(x):=M_{n}\int_{\R^{3}} \frac{|(u_{n}(x)-u(x)) - (u_{n}(y)- u(y))e^{\imath A_{\e}(\frac{x+y}{2})\cdot (x-y)})|^{2}}{|x-y|^{3+2s}} \, dy+ V_{\e} |u_{n}(x)-u(x)|^{2}. 
\end{equation*}
Fix $R>0$ and choose $\rho>R$. Then we have 
\begin{align}\label{Phi}
0&\leq \int_{B_{R}} \Phi_{n}(x) \, dx= \int_{B_{R}} \Phi_{n}(x)\eta_{\rho}(x) \,dx \nonumber \\
&\leq M_{n} \iint_{\R^{6}} \frac{|(u_{n}(x) - u_{n}(y)e^{\imath A_{\e}(\frac{x+y}{2})\cdot (x-y)})- (u(x)- u(y)e^{\imath A_{\e}(\frac{x+y}{2})\cdot (x-y)})|^{2}}{|x-y|^{3+2s}} \eta_{\rho}(x)\, dxdy\nonumber \\
&\quad+ \int_{\R^{3}} V_{\e} |u_{n}- u|^{2}\eta_{\rho} \,dx \nonumber \\
&= M_{n}\iint_{\R^{6}} \frac{|u_{n}(x) - u_{n}(y)e^{\imath A_{\e}(\frac{x+y}{2})\cdot (x-y)}|^{2}}{|x-y|^{3+2s}} \eta_{\rho}(x)\, dxdy + \int_{\R^{3}} V_{\e} |u_{n}|^{2}\eta_{\rho}\,dx  \nonumber \\
&\quad + M_{n}\iint_{\R^{6}} \frac{|u(x) - u(y)e^{\imath A_{\e}(\frac{x+y}{2})\cdot (x-y)}|^{2}}{|x-y|^{3+2s}} \eta_{\rho}(x)\, dxdy + \int_{\R^{3}} V_{\e} |u|^{2}\eta_{\rho} \,dx \nonumber \\
&\quad -2\Re\left[ M_{n}\iint_{\R^{6}} \frac{(u_{n}(x) - u_{n}(y)e^{\imath A_{\e}(\frac{x+y}{2})\cdot (x-y)})\overline{(u(x)-u(y)e^{\imath A_{\e}(\frac{x+y}{2})\cdot (x-y)})}}{|x-y|^{3+2s}} \eta_{\rho}(x)\, dxdy \right. \nonumber \\
&\quad \left.+ \int_{\R^{3}} V_{\e} u_{n}\bar{u}\,\eta_{\rho} \,dx\right] \nonumber \\
&=I_{n,\rho}-II_{n,\rho}+III_{n,\rho}+IV_{n,\rho}\leq |I_{n,\rho}|+ |II_{n,\rho}|+ |III_{n,\rho}|+ |IV_{n,\rho}|,
\end{align}
where 
\begin{align*}
&I_{n,\rho}:=M_{n}\iint_{\R^{6}} \frac{|u_{n}(x) - u_{n}(y)e^{\imath A_{\e}(\frac{x+y}{2})\cdot (x-y)}|^{2}}{|x-y|^{3+2s}} \eta_{\rho}(x)\, dxdy + \int_{\R^{3}} V_{\e} |u_{n}|^{2}\eta_{\rho}\,dx- \int_{\R^{3}} g(\e x, |u_{n}|^{2}) |u_{n}|^{2} \eta_{\rho}\, dx,\\
&II_{n,\rho}:= \Re\left[ M_{n}\iint_{\R^{6}} \frac{(u_{n}(x) - u_{n}(y)e^{\imath A_{\e}(\frac{x+y}{2})\cdot (x-y)}) \overline{(u(x)-u(y)e^{\imath A_{\e}(\frac{x+y}{2})\cdot (x-y)})}}{|x-y|^{3+2s}} \eta_{\rho}(x)\, dxdy \right. \\
&\qquad\left.+ \int_{\R^{3}} V_{\e} u_{n}\bar{u} \eta_{\rho}\,dx\right]- \Re\int_{\R^{3}} g(\e x, |u_{n}|^{2}) u_{n} \bar{u} \eta_{\rho}\, dx,\\
&III_{n,\rho}:=-\Re\left[ M_{n}\iint_{\R^{6}} \frac{(u_{n}(x) - u_{n}(y)e^{\imath A_{\e}(\frac{x+y}{2})\cdot (x-y)})\overline{(u(x)-u(y)e^{\imath A_{\e}(\frac{x+y}{2})\cdot (x-y)})}}{|x-y|^{3+2s}} \eta_{\rho}(x) dxdy\right. \\
&\qquad \left.+ \int_{\R^{3}} V_{\e} u_{n}\bar{u} \eta_{\rho}\,dx\right]+ M_{n}\iint_{\R^{6}} \frac{|u(x) - u(y)e^{\imath A_{\e}(\frac{x+y}{2})\cdot (x-y)}|^{2}}{|x-y|^{3+2s}} \eta_{\rho}(x)\, dxdy + \int_{\R^{3}} V_{\e} |u|^{2}\eta_{\rho}\,dx\\
&\qquad =:-III^{1}_{n, \rho}+III^{2}_{n,\rho},\\
&IV_{n,\rho}:=\int_{\R^{3}} g(\e x, |u_{n}|^{2})|u_{n}|^{2} \eta_{\rho}\, dx- \Re\int_{\R^{3}} g(\e x, |u_{n}|^{2})u_{n} \bar{u} \eta_{\rho}\, dx.
\end{align*}
Let us prove that 
\begin{equation}\label{I0}
\lim_{\rho \rightarrow \infty} \limsup_{n\rightarrow \infty} |I_{n,\rho}|=0. 
\end{equation}
Firstly, we note that $I_{n,\rho}$ can be written as
\begin{align*}
I_{n,\rho}&=\langle J_{\e}'(u_{n}), u_{n}\eta_{\rho} \rangle \nonumber \\
&-\Re \left[M_{n}\iint_{\R^{6}} \frac{(u_{n}(x)- u_{n}(y)e^{\imath A_{\e}(\frac{x+y}{2})\cdot (x-y)})(\eta_{\rho}(x)- \eta_{\rho}(y))}{|x-y|^{3+2s}} \overline{u_{n}(y)}e^{-\imath A_{\e}(\frac{x+y}{2})\cdot (x-y)}\, dxdy\right].
\end{align*}
Since $(u_{n}\eta_{\rho})$ is bounded in $\h$, we have $\langle J_{\e}'(u_{n}), u_{n}\eta_{\rho} \rangle=o_{n}(1)$, and then
\begin{equation}\label{vin1}
I_{n, \rho}= o_{n}(1)- \Re \left[M_{n}\iint_{\R^{6}} \frac{(u_{n}(x)- u_{n}(y)e^{\imath A_{\e}(\frac{x+y}{2})\cdot (x-y)})(\eta_{\rho}(x)- \eta_{\rho}(y))}{|x-y|^{3+2s}} \overline{u_{n}(y)}e^{-\imath A_{\e}(\frac{x+y}{2})\cdot (x-y)}\, dxdy\right].
\end{equation}
Applying H\"older inequality and using the boundedness of $(u_{n})$ in $\h$ and \eqref{PS3} with $\eta_{R}= 1-\eta_{\rho}$, we can infer that
\begin{equation*}
\lim_{\rho \rightarrow \infty} \limsup_{n\rightarrow \infty} \left|M_{n} \iint_{\R^{6}} \frac{(u_{n}(x)- u_{n}(y)e^{\imath A_{\e}(\frac{x+y}{2})\cdot (x-y)})(\eta_{\rho}(x)- \eta_{\rho}(y))}{|x-y|^{3+2s}}\overline{u_{n}(y)}e^{-\imath A_{\e}(\frac{x+y}{2})\cdot (x-y)} dxdy\right| =0,
\end{equation*}
which together with \eqref{vin1} yields \eqref{I0}. 
Now, we note that 
\begin{align*}
II_{n, \rho}&=\langle J_{\e}'(u_{n}), u\eta_{\rho} \rangle \\
&- \Re \left[M_{n}\iint_{\R^{6}} \frac{(u_{n}(x)- u_{n}(y)e^{\imath A_{\e}(\frac{x+y}{2})\cdot (x-y)})(\eta_{\rho}(x)- \eta_{\rho}(y))}{|x-y|^{3+2s}} \overline{u(y)}e^{-\imath A_{\e}(\frac{x+y}{2})\cdot (x-y)}\, dxdy \right]. 
\end{align*}
Proceeding as in the previous case, we can show that
\begin{equation*}
\lim_{\rho \rightarrow \infty} \limsup_{n\rightarrow \infty} \left|M_{n}\iint_{\R^{6}} \frac{(u_{n}(x)- u_{n}(y)e^{\imath A_{\e}(\frac{x+y}{2})\cdot (x-y)})(\eta_{\rho}(x)- \eta_{\rho}(y))}{|x-y|^{3+2s}}\overline{u(y)}e^{-\imath A_{\e}(\frac{x+y}{2})\cdot (x-y)}\, dxdy\right| =0,
\end{equation*} 
and being $\langle J_{\e}'(u_{n}), u\eta_{\rho} \rangle=o_{n}(1)$, we obtain
\begin{equation}\label{II0}
\lim_{\rho \rightarrow \infty} \limsup_{n\rightarrow \infty} |II_{n, \rho}|=0. 
\end{equation}
Now, we show that
\begin{equation}\label{III0}
\lim_{\rho \rightarrow \infty}\lim_{n\rightarrow \infty} |III_{n, \rho}|=0. 
\end{equation}
Firstly, we can use $M_{n}\rightarrow a+b\ell^{2}$ and the Dominated Convergence Theorem to see that
\begin{align}\label{SPLIT1}
\lim_{\rho\rightarrow \infty} \lim_{n\rightarrow \infty} III^{2}_{n, \rho}=(a+b\ell^{2})[u]_{A_{\e}}^{2}+\int_{\R^{3}} V_{\e}|u|^{2}dx=: L.
\end{align}
On the other hand, we can observe that $III_{n, \rho}^{1}$ can be written as follows:
\begin{align}\label{SPLIT}
III_{n, \rho}^{1} 
&=\Re\left[ M_{n}\iint_{\R^{6}} \frac{(u_{n}(x) - u_{n}(y)e^{\imath A_{\e}(\frac{x+y}{2})\cdot (x-y)})\overline{(u(x)\eta_{\rho}(x)-u(y) \eta_{\rho}(y) e^{\imath A_{\e}(\frac{x+y}{2})\cdot (x-y)})}}{|x-y|^{3+2s}} dxdy\right] \nonumber \\
&+\Re \int_{\R^{3}} V_{\e} u_{n}\bar{u} \eta_{\rho}\,dx \nonumber\\
&-\Re\left[ M_{n}\iint_{\R^{6}} \frac{(u_{n}(x) - u_{n}(y)e^{\imath A_{\e}(\frac{x+y}{2})\cdot (x-y)}) (\eta_{\rho}(x)-\eta_{\rho}(y))}{|x-y|^{3+2s}} \overline{u(y)} e^{-\imath A_{\e}(\frac{x+y}{2})\cdot (x-y)} dxdy\right] \nonumber \\
&=: A_{n, \rho}-B_{n, \rho}.
\end{align}
From the weak convergence  of $(u_{n})$  and $M_{n}\rightarrow a+b\ell^{2}$ we can obtain that
\begin{align*}
\lim_{n\rightarrow \infty}A_{n, \rho}&= \Re\left[ (a+b\ell^{2})\iint_{\R^{6}} \frac{(u(x) - u(y)e^{\imath A_{\e}(\frac{x+y}{2})\cdot (x-y)})\overline{(u(x)\eta_{\rho}(x)-u(y) \eta_{\rho}(y) e^{\imath A_{\e}(\frac{x+y}{2})\cdot (x-y)})}}{|x-y|^{3+2s}} dxdy\right] \\
&\quad +\Re \int_{\R^{3}} V_{\e} |u|^{2} \eta_{\rho}\,dx \\
&= \Re\left[ (a+b\ell^{2})\iint_{\R^{6}} \frac{|u(x) - u(y)e^{\imath A_{\e}(\frac{x+y}{2})\cdot (x-y)}|^{2}}{|x-y|^{3+2s}} \eta_{\rho}(x) dxdy\right] \\
&\quad+ \Re\left[ (a+b\ell^{2})\iint_{\R^{6}}  \frac{(u(x)- u(y)e^{\imath A_{\e}(\frac{x+y}{2})\cdot (x-y)})(\eta_{\rho}(x)- \eta_{\rho}(y))}{|x-y|^{3+2s}}\overline{u(y)}e^{-\imath A_{\e}(\frac{x+y}{2})\cdot (x-y)}\, dxdy \right]\\
&\quad+\Re \int_{\R^{3}} V_{\e} |u|^{2} \eta_{\rho}\,dx.
\end{align*}
Noting that
\begin{align}\label{PS3m}
&\left|\iint_{\R^{6}}  \frac{(u(x)- u(y)e^{\imath A_{\e}(\frac{x+y}{2})\cdot (x-y)})(\eta_{\rho}(x)- \eta_{\rho}(y))}{|x-y|^{3+2s}}\overline{u(y)}e^{-\imath A_{\e}(\frac{x+y}{2})\cdot (x-y)}\, dxdy\right|  \nonumber\\
&\leq [u]_{A_{\e}} \left(\iint_{\R^{6}} |u(y)|^{2}\frac{|\eta_{\rho}(x)-\eta_{\rho}(y)|^{2}}{|x-y|^{3+2s}} \, dxdy\right)^{1/2}\rightarrow 0 \mbox{ as } \rho\rightarrow \infty
\end{align}
(one can argue as in \eqref{PS3}), and using the Dominated Convergence Theorem we can deduce that
\begin{align*}
\lim_{\rho\rightarrow \infty}\lim_{n\rightarrow \infty}A_{n, \rho}=L.
\end{align*}
Similarly to \eqref{PS3m}, we also have
\begin{align*}
\limsup_{n\rightarrow \infty}|B_{n, \rho}|\leq C \left(\iint_{\R^{6}} |u(y)|^{2}\frac{|\eta_{\rho}(x)-\eta_{\rho}(y)|^{2}}{|x-y|^{3+2s}} \, dxdy\right)^{1/2}\rightarrow 0 \mbox{ as } \rho\rightarrow \infty.
\end{align*}
From the above relations of limits we can infer that
\begin{align}\label{SPLIT2}
\lim_{\rho\rightarrow \infty} \lim_{n\rightarrow \infty} III^{1}_{n, \rho}=L.
\end{align}
Combining \eqref{SPLIT1} and \eqref{SPLIT2} and using the definition of $III_{n, \rho}$ we can conclude that \eqref{III0} holds true. 

In the light of $(g_{1})$ and $(g_{2})$ and the strong convergence of $|u_{n}|\rightarrow |u|$ in $L^{p}_{loc}(\R^{3}, \R)$ for $1\leq p<\frac{6}{3-2s}$ (by Theorem \ref{Sembedding}), we deduce that for any $\rho>R$ it holds
\begin{equation}\label{IV0}
\lim_{n\rightarrow \infty} |IV_{n, \rho}|=0. 
\end{equation}
Putting together \eqref{Phi}, \eqref{I0}, \eqref{II0}, \eqref{III0} and \eqref{IV0} we get
$$
0\leq \limsup_{n\rightarrow \infty} \int_{B_{R}} \Phi_{n}(x) dx\leq0,
$$
that is $\lim_{n\rightarrow \infty} \int_{B_{R}} \Phi_{n}(x) dx=0$ which yields \eqref{SPLITbr}.\\
{\bf Step 4} Conclusion. Using \eqref{T} we know that for each $\zeta>0$ there exists $R=R(\zeta)>\frac{C}{\zeta}$ such that 
\begin{equation}\label{ter8}
\limsup_{n\rightarrow \infty} \left[ \int_{\R^{3}\setminus B_{R}} \,dx \int_{\R^{3}} a\frac{|u_{n}(x)- u_{n}(y)e^{\imath A_{\e}(\frac{x+y}{2})\cdot (x-y)}|^{2}}{|x-y|^{3+2s}}\,dy + \int_{\R^{3}\setminus B_{R}} V(\e x) |u_{n}|^{2} \,dx\right] <\zeta. 
\end{equation}
Taking into account $u_{n}\rightharpoonup u$ in $\h$, \eqref{ter8} and \eqref{SPLITbr} we can infer
\begin{align*}
\|u\|_{\e}^{2} &\leq \liminf_{n\rightarrow \infty} \|u_{n}\|_{\e}^{2} \leq  \limsup_{n\rightarrow \infty} \|u_{n}\|_{\e}^{2}  \\
&= \limsup_{n\rightarrow \infty} \Bigl[ \, \int_{B_{R}} dx \int_{\R^{3}} a\frac{|u_{n}(x)- u_{n}(y)e^{\imath A_{\e}(\frac{x+y}{2})\cdot (x-y)}|^{2}}{|x-y|^{3+2s}}\, dy + \int_{B_{R}} V_{\e} \,|u_{n}|^{2} dx \\
&\quad\quad+  \int_{\R^{3}\setminus B_{R}} dx \int_{\R^{3}} a\frac{|u_{n}(x)- u_{n}(y)e^{\imath A_{\e}(\frac{x+y}{2})\cdot (x-y)}|^{2}}{|x-y|^{3+2s}}\, dy + \int_{\R^{3}\setminus B_{R}} V_{\e} |u_{n}|^{2} dx \, \Bigr]\\
&\leq \int_{B_{R}} dx \int_{\R^{3}} a\frac{|u(x)-u(y)e^{\imath A_{\e}(\frac{x+y}{2})\cdot (x-y)}|^{2}}{|x-y|^{3+2s}} \, dy + \int_{B_{R}} V_{\e} |u|^{2} dx +\zeta.
\end{align*}
Since $R\rightarrow \infty$ as $\zeta\rightarrow 0$, we get
\begin{equation*}
\|u\|_{\e}^{2}\leq \liminf_{n\rightarrow \infty} \|u_{n}\|_{\e}^{2} \leq \limsup_{n\rightarrow \infty} \|u_{n}\|_{\e}^{2}\leq \|u\|_{\e}^{2}, 
\end{equation*}
which implies $\|u_{n}\|_{\e}\rightarrow \|u\|_{\e}$. Recalling that $\h$ is a Hilbert space, we can deduce that $u_{n}\rightarrow u$ in $\h$ as $n\rightarrow \infty$. 
\end{proof}

Since we are looking for multiple critical points of the functional $J_{\e}$, we shall consider it constrained to an appropriated subset of $\h$.
More precisely, we define the Nehari manifold associated to (\ref{MPe}), that is
\begin{equation*}
\mathcal{N}_{\e}:= \{u\in \h \setminus \{0\} : \langle J_{\e}'(u), u \rangle =0\},
\end{equation*}
and we indicate by $\mathcal{N}_{\mu}$ the Nehari manifold associated to \eqref{AP0}.
Moreover, it is easy to show (see \cite{W}) that $c_{\e}$ can be characterized as follows:
$$
c_{\e}=\inf_{u\in \h\setminus\{0\}} \sup_{t\geq 0} J_{\e}(t u)=\inf_{u\in \N_{\e}} J_{\e}(u).
$$
In what follows, we denote by $c_{\mu}$ the minimax level for the autonomous problem \eqref{AP0}.

From the growth conditions of $g$, we can see that 
for a fixed $u\in \mathcal{N}_{\e}$
\begin{align*}
0&\geq\|u\|_{\e}^{2}-\int_{\R^{3}} g(\e_{n} x, |u|^{2})|u|^{2}\,dx\\
&\geq \|u\|_{\e}^{2}-\frac{1}{k} \int_{\R^{3}} V_{\e}(x)|u|^{2}\, dx-C\|u\|_{\e}^{q} \\
&\geq \min\left\{a, \frac{k-1}{k}\right\}\|u\|^{2}_{\e}-C\|u\|_{\e}^{q},
\end{align*}
so there exists $r>0$ independent of  $u$ such that 
\begin{equation}\label{uNr}
\|u\|_{\e}\geq r \mbox{ for all } u\in \mathcal{N}_{\e}.
\end{equation}
Now, we prove the following result.
\begin{prop}\label{propPSc}
Let $c\in \R$. Then, the functional $J_{\e}$ restricted to $\mathcal{N}_{\e}$ satisfies the $(PS)_{c}$ condition at the level $c$.
\end{prop}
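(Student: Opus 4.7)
The plan is to reduce the constrained Palais--Smale condition on $\mathcal{N}_{\e}$ to the unconstrained one already established in Lemma \ref{PSc}. Given $(u_{n})\subset \mathcal{N}_{\e}$ with $J_{\e}(u_{n})\to c$ and $\|(J_{\e}|_{\mathcal{N}_{\e}})'(u_{n})\|_{*}\to 0$, the argument of Step~1 in Lemma \ref{PSc} applies verbatim (only $\langle J_{\e}'(u_{n}),u_{n}\rangle=0$ and $J_{\e}(u_{n})\to c$ are used there), yielding boundedness of $(u_{n})$ in $\h$. The Lagrange multiplier rule then produces $\lambda_{n}\in\R$ with $J_{\e}'(u_{n})-\lambda_{n}\Psi_{\e}'(u_{n})\to 0$ in $\h^{*}$, where
$\Psi_{\e}(u):=\langle J_{\e}'(u),u\rangle=\|u\|_{\e}^{2}+b[u]_{A_{\e}}^{4}-\int_{\R^{3}} g(\e x,|u|^{2})|u|^{2}\,dx$
is the natural constraint defining $\mathcal{N}_{\e}$. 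Testing with $u_{n}$ and using $\Psi_{\e}(u_{n})=0$ gives $\lambda_{n}\langle\Psi_{\e}'(u_{n}),u_{n}\rangle=o_{n}(1)$, so the whole proof reduces to showing that $\langle\Psi_{\e}'(u_{n}),u_{n}\rangle$ stays bounded away from zero.

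For this crucial estimate I would compute $\langle\Psi_{\e}'(u),u\rangle$ directly, and then use the Nehari identity $\|u\|_{\e}^{2}+b[u]_{A_{\e}}^{4}=\int g(\e x,|u|^{2})|u|^{2}\,dx$ to cancel the leading positive contributions coming from $\|u\|_{\e}^{2}$ and $b[u]_{A_{\e}}^{4}$. After this cancellation the expression collapses to
\begin{equation*}
\langle\Psi_{\e}'(u),u\rangle=-2\|u\|_{\e}^{2}-2\int_{\R^{3}}\bigl[g'(\e x,|u|^{2})|u|^{2}-g(\e x,|u|^{2})\bigr]|u|^{2}\,dx.
\end{equation*}
Splitting the integral, on $\Lambda_{\e}$ one has $g(\e x,\cdot)=f$ and $(f_{4})$ gives $f'(t)t-f(t)\geq C_{\sigma}t^{(\sigma-2)/2}\geq 0$, so the integrand has the favourable sign. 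On $\Lambda_{\e}^{c}$ the penalized nonlinearity $\tilde f$ does not satisfy the usual monotonicity, but by construction $\tilde f'\geq 0$ (hence $\tilde f'(|u|^{2})|u|^{4}\geq 0$), while $(g_{3})(ii)$ bounds $\tilde f(|u|^{2})|u|^{2}$ by $V_{\e}|u|^{2}/k$; therefore the $\Lambda_{\e}^{c}$ contribution is at most $(2/k)\|u\|_{\e}^{2}$. Combined with the uniform bound $\|u_{n}\|_{\e}\geq r$ from \eqref{uNr}, this yields $\langle\Psi_{\e}'(u_{n}),u_{n}\rangle\leq -2(1-1/k)r^{2}$, which is strictly negative since $k>2$.

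With this lower bound, $\lambda_{n}\to 0$; the boundedness of $\Psi_{\e}'(u_{n})$ (which follows from that of $(u_{n})$) then gives $J_{\e}'(u_{n})\to 0$ in $\h^{*}$, so $(u_{n})$ is a genuine $(PS)_{c}$ sequence and Lemma \ref{PSc} provides a subsequence converging strongly in $\h$. The limit lies in $\mathcal{N}_{\e}$ by continuity of $\Psi_{\e}$ and \eqref{uNr}. The main obstacle is precisely the strict negativity of $\langle\Psi_{\e}'(u_{n}),u_{n}\rangle$: the Kirchhoff correction $b[u]_{A_{\e}}^{4}$ generates extra fourth-order terms that do not cancel in the classical Nehari computation, and the penalized nonlinearity ruins the monotonicity of $t\mapsto g(x,t)/t$ on $\Lambda_{\e}^{c}$; both obstacles are defused by exploiting $(f_{4})$ inside $\Lambda_{\e}$ and by trading the lost monotonicity on $\Lambda_{\e}^{c}$ for the strict factor $1/k<1/2$ supplied by the penalization.
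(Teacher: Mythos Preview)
Your proof is correct and follows the same strategy as the paper: reduce to the unconstrained Palais--Smale condition via Lagrange multipliers by showing $\langle\Psi_{\e}'(u_{n}),u_{n}\rangle$ stays strictly negative. Your execution of the key negativity estimate is in fact slightly cleaner than the paper's: you obtain the uniform bound $\langle\Psi_{\e}'(u_{n}),u_{n}\rangle\leq -2(1-1/k)\|u_{n}\|_{\e}^{2}\leq -2(1-1/k)r^{2}$ directly, whereas the paper only proves $\langle T_{\e}'(u_{n}),u_{n}\rangle\leq -C\int_{\Lambda_{\e}}|u_{n}|^{\sigma}$ and then needs an additional contradiction argument (if the limit were zero, $u_{n}\to 0$ in $L^{\sigma}(\Lambda_{\e})$ would force $\|u_{n}\|_{\e}\to 0$ against \eqref{uNr}).
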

\begin{proof}
Let $(u_{n})\subset \mathcal{N}_{\e}$ be such that $J_{\e}(u_{n})\rightarrow c$ and $\|J'_{\e}(u_{n})_{|\mathcal{N}_{\e}}\|_{*}=o_{n}(1)$. Then (see \cite{W}) we can find $(\lambda_{n})\subset \R$ such that
\begin{equation}\label{AFT}
J'_{\e}(u_{n})=\lambda_{n} T'_{\e}(u_{n})+o_{n}(1),
\end{equation}
where $T_{\e}: \h\rightarrow \R$ is defined as
\begin{align*}
T_{\e}(u)=\|u\|_{\e}^{2}+b[u]^{4}_{A_{\e}}-\int_{\R^{3}} g(\e x, |u|^{2})|u|^{2}\, dx.
\end{align*}
In view of $\langle J'_{\e}(u_{n}), u_{n}\rangle=0$, $g(\e x, |u|^{2})$ is constant in $\Lambda_{\e}^{c}\cap \{|u|^{2}>T_{a'}\}$,  and using the definitions of $g$, the monotonicity of $\eta$ and $(f_4)$, we obtain
\begin{align}
&\langle T'_{\e}(u_{n}), u_{n}\rangle \nonumber\\
&=2\|u_{n}\|_{\e}^{2}+4 b[u_{n}]^{4}_{A_{\e}}-2\int_{\R^{3}} g'(\e x, |u_{n}|^{2})|u_{n}|^{4}\, dx-2\int_{\R^{3}} g(\e x, |u_{n}|^{2})|u_{n}|^{2}\, dx \nonumber\\
&=-2\|u_{n}\|^{2}_{\e}+2\int_{\R^{3}} g(\e x, |u_{n}|^{2})|u_{n}|^{2}\, dx-2\int_{\R^{3}} g'(\e x, |u_{n}|^{2})|u_{n}|^{4}\, dx \nonumber \\
&\leq -C\int_{\Lambda_{\e}\cup \{|u_{n}|^{2}<t_{a'}\}}  |u_{n}|^{\sigma} dx \nonumber \\
&\leq -C\int_{\Lambda_{\e}} |u_{n}|^{\sigma} dx \label{22ZS1}.
\end{align}
Since $(u_{n})$ is bounded in $\h$, we may assume that $\langle T'_{\e}(u_{n}), u_{n}\rangle\rightarrow \ell\leq 0$. If $\ell=0$, from \eqref{22ZS1} it follows that $u_{n}\rightarrow 0$ in $L^{\sigma}(\Lambda_{\e}, \R)$. Using $\langle J'_{\e}(u_{n}), u_{n}\rangle=0$, $g$ is subcritical and $(g_3)$-(ii) we have
\begin{equation*}
\|u_{n}\|^{2}_{\e} \leq  \int_{\Lambda_{\e}^{c}} g(\e x, |u_{n}|^{2}) |u_{n}|^{2} \,dx+o_{n}(1)\leq \frac{1}{K}\int_{\R^{3}} V_{\e}(x)|u_{n}|^{2}\, dx+ o_{n}(1)
\end{equation*}
that is $\|u_{n}\|_{\e}\rightarrow 0$ which contradicts \eqref{uNr}. Consequently, $\ell<0$ and in the light of \eqref{AFT} we can deduce that $\lambda_{n}\rightarrow 0$. Hence, $u_{n}$ is a $(PS)_{c}$ sequence for the unconstrained functional and we can apply Lemma \ref{PSc} to get the thesis.
\end{proof}

As a byproduct of the above proof we have the following result:
\begin{cor}\label{cor}
The critical points of the functional $J_{\e}$ on $\mathcal{N}_{\e}$ are critical points of $J_{\e}$.
\end{cor}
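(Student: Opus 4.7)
The plan is to invoke the Lagrange multiplier principle together with the negativity estimate for $\langle T_\e'(u),u\rangle$ that was already extracted in the proof of Proposition \ref{propPSc}. Since $\mathcal{N}_\e = T_\e^{-1}(0) \setminus \{0\}$ and $J_\e, T_\e \in C^1(H^s_\e,\R)$, a constrained critical point $u \in \mathcal{N}_\e$ of $J_\e|_{\mathcal{N}_\e}$ yields, via the standard Lagrange multiplier rule, the existence of $\lambda \in \R$ such that $J_\e'(u) = \lambda\, T_\e'(u)$ in $(H^s_\e)^*$. Pairing this identity with $u$ itself and using the Nehari constraint $\langle J_\e'(u),u\rangle = 0$, we get $\lambda\langle T_\e'(u),u\rangle = 0$.

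The heart of the argument is thus to rule out $\langle T_\e'(u),u\rangle = 0$. First I would reproduce the computation carried out in \eqref{22ZS1}: combining $\langle J_\e'(u),u\rangle=0$ with assumption $(f_4)$, the definitions of $g$, $\hat f$ and $\xi$, together with the monotonicity property $(\xi_3)$, one obtains
\begin{equation*}
\langle T_\e'(u), u\rangle \leq -C\int_{\Lambda_\e} |u|^\sigma\, dx.
\end{equation*}
Therefore if $\langle T_\e'(u),u\rangle = 0$, then $u\equiv 0$ on $\Lambda_\e$. Plugging this back into $\langle J_\e'(u),u\rangle = 0$ and using $(g_3)$-(ii) in $\Lambda_\e^c$ (exactly as done at the end of the proof of Proposition \ref{propPSc}), one deduces
\begin{equation*}
\|u\|_\e^2 \leq \int_{\Lambda_\e^c} g(\e x, |u|^2)|u|^2\, dx \leq \frac{1}{k}\int_{\R^3} V_\e(x)|u|^2\, dx \leq \frac{1}{k}\|u\|_\e^2,
\end{equation*}
which, since $k>2$, forces $u=0$, contradicting $u \in \mathcal{N}_\e$. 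Consequently $\langle T_\e'(u),u\rangle < 0$, hence $\lambda = 0$, and $J_\e'(u) = 0$ in $(H^s_\e)^*$ as required.

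The main obstacle, conceptually, is establishing the strict sign $\langle T_\e'(u),u\rangle < 0$ on $\mathcal{N}_\e$; however, that work has already been carried out in \eqref{22ZS1} and the subsequent paragraph of Proposition \ref{propPSc}, so in the proof of the corollary one only needs to cite those computations, leaving essentially a two-line argument. No new ingredient beyond assumption $(f_4)$, property $(\xi_3)$ of the penalization, and $k>2$ is required.
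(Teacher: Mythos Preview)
Your proposal is correct and follows exactly the approach implicit in the paper, which presents the corollary as an immediate byproduct of the proof of Proposition~\ref{propPSc}: the key estimate \eqref{22ZS1} together with the contradiction argument based on $(g_3)$-(ii) and $k>2$ gives $\langle T_\e'(u),u\rangle<0$ on $\mathcal{N}_\e$, whence the Lagrange multiplier must vanish. The only cosmetic point is that, strictly speaking, the inequality $\langle T_\e'(u),u\rangle<0$ (which you derive anyway) should logically precede the invocation of the Lagrange multiplier rule, since it is what guarantees $T_\e'(u)\neq 0$ and hence that $\mathcal{N}_\e$ is a $C^1$-manifold near $u$.
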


At this point, we provide some useful results about Kirchhoff autonomous problems \eqref{AP0}.
We begin proving the following Lions compactness result.
\begin{lem}\label{LionsFS}
Let $(u_{n})\subset H^{s}_{\mu}$ be a $(PS)_{c}$ sequence for $J_{\mu}$. Then one of the following conclusions holds:
\begin{compactenum}[$(i)$]
\item $u_{n}\rightarrow 0$ in $H^{s}_{\mu}$;
\item there exists a sequence $(y_{n})\subset \R^{3}$ and constants $R, \beta>0$ such that
$$
\liminf_{n\rightarrow \infty} \int_{B_{R}(y_{n})} |u_{n}|^{2}dx\geq \beta>0.
$$
\end{compactenum}
\end{lem}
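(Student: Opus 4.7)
The plan is to establish this as a straightforward vanishing/non-vanishing dichotomy: assume the alternative $(ii)$ fails and deduce $(i)$. First I would check that $(u_n)$ is bounded in $H^s_\mu$ by the standard Ambrosetti–Rabinowitz argument, using $(f_3)$: combining $J_\mu(u_n)\to c$ and $\langle J'_\mu(u_n),u_n\rangle = o_n(1)$ with $\theta>4$,
\begin{equation*}
c + o_n(1)\|u_n\|_\mu \,\geq\, \Bigl(\tfrac{1}{2}-\tfrac{1}{\theta}\Bigr)\|u_n\|_\mu^{2} + \Bigl(\tfrac{1}{4}-\tfrac{1}{\theta}\Bigr) b[u_n]^{4},
\end{equation*}
which yields a uniform bound on $\|u_n\|_\mu$ (and thus on $[u_n]$).

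Next, suppose that $(ii)$ does not hold. Then for every $R>0$,
\begin{equation*}
\lim_{n\to\infty}\,\sup_{y\in\R^{3}}\,\int_{B_{R}(y)} |u_n|^{2}\,dx \;=\; 0.
\end{equation*}
Since $(u_n)$ is bounded in $H^{s}_{\mu}=H^{s}(\R^{3},\R)$, I would apply Lemma \ref{Lions} (with $q=2$) to conclude that $u_n\to 0$ in $L^{r}(\R^{3},\R)$ for every $r\in(2,2^{*}_{s})$.

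Now I would use the subcritical growth of $f$. By $(f_1)$ and $(f_2)$, for any $\delta>0$ there exists $C_\delta>0$ with $|f(t)|\le \delta + C_\delta t^{(q-2)/2}$ for all $t\ge 0$, so
\begin{equation*}
\int_{\R^{3}} f(u_n^{2})\,u_n^{2}\,dx \;\leq\; \delta\,\|u_n\|_{L^{2}}^{2} + C_\delta\,\|u_n\|_{L^{q}}^{q}.
\end{equation*}
Since $q\in(4,2^{*}_{s})\subset(2,2^{*}_{s})$, the second term tends to $0$, while the first is $O(\delta)$ uniformly in $n$ thanks to the $L^2$-bound coming from $\|u_n\|_\mu$. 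Letting $\delta\to 0$ gives $\int f(u_n^{2})u_n^{2}\,dx\to 0$.

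Finally, inserting this into $\langle J'_\mu(u_n),u_n\rangle = \|u_n\|_\mu^{2} + b[u_n]^{4} - \int f(u_n^{2})u_n^{2}\,dx = o_n(1)$, I conclude that $\|u_n\|_\mu^{2}+b[u_n]^{4}\to 0$, hence $\|u_n\|_\mu\to 0$, which is precisely $(i)$. There is no serious obstacle here: the only mild technical point is that the Kirchhoff term $b[u_n]^{4}$ appears with a positive sign in both the boundedness estimate and in $\langle J'_\mu(u_n),u_n\rangle$, so it does not interfere with either step; in particular it cannot prevent $\|u_n\|_\mu\to 0$ in the vanishing case.
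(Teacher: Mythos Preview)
Your proof is correct and follows essentially the same approach as the paper's: assume $(ii)$ fails, establish boundedness of $(u_n)$ via the Ambrosetti--Rabinowitz condition $(f_3)$, invoke Lemma~\ref{Lions} to get $u_n\to 0$ in $L^{r}(\R^{3})$ for $r\in(2,2^{*}_{s})$, use $(f_1)$--$(f_2)$ to deduce $\int f(u_n^{2})u_n^{2}\,dx\to 0$, and then read off $\|u_n\|_\mu\to 0$ from $\langle J'_\mu(u_n),u_n\rangle=o_n(1)$. The only difference is that you spell out the growth estimate on $f$ and the role of the Kirchhoff term more explicitly, whereas the paper simply refers back to the boundedness argument in Lemma~\ref{PSc}.
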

\begin{proof}
Assume that $(ii)$ does not occur. Arguing as in the proof of Lemma \ref{PSc} we can see that $(u_{n})$ is bounded in $H^{s}_{\mu}$. Then we can use Lemma \ref{Lions} to deduce that $u_{n}\rightarrow 0$ in $L^{r}(\R^{3}, \R)$ for all $r\in (2, \2)$. In view of $(f_1)$-$(f_2)$ we get $\int_{\R^{3}} f(u^{2}_{n})u^{2}_{n}dx=o_{n}(1)$. This fact combined with $\langle J'_{\mu}(u_{n}), u_{n}\rangle=o_{n}(1)$ yields $\|u_{n}\|^{2}_{\mu}\leq \|u_{n}\|^{2}_{\mu}+b[u_{n}]^{4}=\int_{\R^{3}} f(u_{n}^{2})u_{n}^{2}dx+o_{n}(1)=o_{n}(1)$.
\end{proof}

\noindent 
Therefore, we can prove an existence result for the autonomous Kirchhoff problem.
\begin{lem}\label{FS}
Fo all $\mu>0$, there exists a positive ground state solution of \eqref{AP0}.
\end{lem}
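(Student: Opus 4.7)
The plan is to obtain a ground state for \eqref{AP0} by minimising $J_{\mu}$ on the Nehari manifold $\N_{\mu}$, combined with a concentration--compactness argument for the Kirchhoff coefficient. First, using $(f_1)$--$(f_4)$ exactly as in Lemma \ref{MPG} and Proposition \ref{propPSc} (but with $f$ in place of $g$ and no penalisation region), one checks that $J_{\mu}$ has mountain pass geometry, that the ground state level $c_{\mu}:=\inf_{\N_{\mu}} J_{\mu}$ is strictly positive and coincides with the mountain pass level, and that any constrained Palais--Smale sequence on $\N_{\mu}$ is in fact an unconstrained $(PS)_{c_{\mu}}$-sequence for $J_{\mu}$ (the Lagrange multiplier vanishes thanks to $(f_4)$, as in the proof of Proposition \ref{propPSc}). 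Ekeland's variational principle applied on $\N_{\mu}$ then produces a sequence $(u_{n})\subset \N_{\mu}$ with $J_{\mu}(u_{n})\to c_{\mu}$ and $J'_{\mu}(u_{n})\to 0$, and $(f_3)$ yields its boundedness in $H^{s}_{\mu}$.

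Next, I would apply Lemma \ref{LionsFS}: vanishing cannot occur since $c_{\mu}>0$, so there exist $(y_{n})\subset \R^{3}$ and $R,\beta>0$ with $\liminf_{n}\int_{B_{R}(y_{n})} u_{n}^{2}\,dx\ge \beta$. Setting $\tilde{u}_{n}(x):=u_{n}(x+y_{n})$, the translation invariance of $J_{\mu}$ preserves both the level and the $(PS)$ property, and up to a subsequence one has $\tilde{u}_{n}\rightharpoonup u\neq 0$ in $H^{s}_{\mu}$, $\tilde{u}_{n}\to u$ a.e.\ and in $L^{r}_{\mathrm{loc}}(\R^{3},\R)$ for $r\in[1,\2)$, and $[\tilde{u}_{n}]^{2}\to t_{0}\in(0,\infty)$.

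The main obstacle is the passage to the limit in the Kirchhoff factor, since a priori the weak-limit procedure only shows that $u$ solves the modified equation $(a+bt_{0})(-\Delta)^{s}u+\mu u = f(u^{2})u$ with $t_{0}\ge [u]^{2}$ by lower semicontinuity. To close the gap I would argue by Brezis--Lieb: writing $v_{n}:=\tilde{u}_{n}-u$ and using the standard splittings $[\tilde{u}_{n}]^{2}=[u]^{2}+[v_{n}]^{2}+o_{n}(1)$ and $\|\tilde{u}_{n}\|_{L^{2}}^{2}=\|u\|_{L^{2}}^{2}+\|v_{n}\|_{L^{2}}^{2}+o_{n}(1)$, together with the Brezis--Lieb lemma for $F(u^{2})$ justified via $(f_1)$--$(f_2)$, the identities $J_{\mu}(\tilde{u}_{n})=c_{\mu}+o_{n}(1)$ and $\langle J'_{\mu}(\tilde{u}_{n}),\tilde{u}_{n}\rangle=o_{n}(1)$ decouple into the corresponding identities for $u$ plus residual terms in $v_{n}$. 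Since $u$ is a nontrivial critical point of the modified functional, a fibration argument based on $(f_4)$ shows $J_{\mu}(u)\ge c_{\mu}$; the residual part then forces $\|v_{n}\|_{\mu}\to 0$, hence $t_{0}=[u]^{2}$, $u$ is a critical point of $J_{\mu}$, and $J_{\mu}(u)=c_{\mu}$.

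Finally, for positivity, I would note that $|u|\in H^{s}(\R^{3},\R)$ with $[|u|]\le [u]$, and by $(f_4)$ the map $t\mapsto J_{\mu}(t|u|)$ attains its positive maximum at some unique $t^{*}\in(0,1]$ with $t^{*}|u|\in \N_{\mu}$ and $J_{\mu}(t^{*}|u|)\le J_{\mu}(u)=c_{\mu}$. Thus $w:=t^{*}|u|$ is a nonnegative ground state, and the strong maximum principle for $(-\Delta)^{s}+\mu/(a+b[w]^{2})$ applied to $w\ge 0$ (see \cite{FQT}) yields $w>0$ in $\R^{3}$, giving the desired positive ground state of \eqref{AP0}.
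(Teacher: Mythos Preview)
Your overall architecture matches the paper: mountain pass geometry, boundedness of the $(PS)_{c_\mu}$ sequence, Lemma~\ref{LionsFS} to exclude vanishing, translation by $y_n$ to obtain a nontrivial weak limit $u$, and then identifying $u$ as a ground state. The positivity step via fibration on $|u|$ is a legitimate alternative to the paper's device of testing the equation against $u^{-}$.

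The genuine gap is in your treatment of the Kirchhoff coefficient. Once you know that $u$ solves only the \emph{modified} equation $(a+bt_0)(-\Delta)^s u+\mu u=f(u^2)u$ with $t_0\ge [u]^2$, you assert that ``a fibration argument based on $(f_4)$ shows $J_\mu(u)\ge c_\mu$''. This does not follow: from $t_0\ge[u]^2$ one gets $\langle J_\mu'(u),u\rangle\le 0$, so the unique $t^*$ with $t^*u\in\N_\mu$ satisfies $t^*\le 1$, and since $t\mapsto J_\mu(tu)$ is decreasing for $t>t^*$ you obtain $J_\mu(u)\le J_\mu(t^*u)$, not the reverse. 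The Brezis--Lieb splitting also picks up the cross term $\tfrac{b}{2}[u]^2[v_n]^2$ in the energy, so the ``residual forces $\|v_n\|_\mu\to 0$'' step is not automatic either.

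The paper sidesteps Brezis--Lieb entirely. Writing $B^2:=\lim_n[\tilde u_n]^2$, it argues by contradiction: if $B^2>[u]^2$ then $\langle J_\mu'(u),u\rangle<0$, so there is $t_0\in(0,1)$ with $t_0u\in\N_\mu$. One then evaluates the quantity
\[
J_\mu(v)-\tfrac{1}{4}\langle J_\mu'(v),v\rangle=\tfrac{1}{4}\|v\|_\mu^2+\int_{\R^3}\Bigl(\tfrac{1}{4}f(v^2)v^2-\tfrac{1}{2}F(v^2)\Bigr)dx,
\]
whose integrand is nonnegative and increasing in $v^2$ by $(f_3)$--$(f_4)$. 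Applying this at $t_0u$ (strictly dominated by the value at $u$ since $t_0<1$) and then Fatou's lemma along $(\tilde u_n)$ yields $c_\mu\le J_\mu(t_0u)<c_\mu$, a contradiction; hence $B^2=[u]^2$, $J_\mu'(u)=0$, and a second application of the same identity with Fatou gives $J_\mu(u)=c_\mu$. Replacing your Brezis--Lieb paragraph by this argument closes the gap.
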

\begin{proof}
It is easy to check that $J_{\mu}$ has a mountain pass geometry, so there exists a sequence $(u_{n})\subset H^{s}_{\mu}$ such that $J_{\mu}(u_{n})\rightarrow c_{\mu}$ and $J'_{\mu}(u_{n})\rightarrow 0$.  Thus, $(u_{n})$ is bounded in $H^{s}_{\mu}$ and we may assume that $u_{n}\rightharpoonup u$ in $H^{s}_{\mu}$ and $[u_{n}]^{2}\rightarrow B^{2}$. 
Suppose that $u\neq 0$. Since $\langle J'_{\mu}(u_{n}), \varphi\rangle=o_{n}(1)$ we can deduce that for all $\varphi\in C^{\infty}_{c}(\R^{3}, \R)$
\begin{equation}\label{HZman}
 \int_{\R^{3}} a(-\Delta)^{\frac{s}{2}} u (-\Delta)^{\frac{s}{2}}\varphi+\mu u \varphi \,dx+bB^{2} \left( \int_{\R^{3}} (-\Delta)^{\frac{s}{2}} u (-\Delta)^{\frac{s}{2}}\varphi \,dx\right)-\int_{\R^{3}} f(u^{2})u\varphi \,dx=0.
\end{equation}
Let us note that $B^{2}\geq [u]^{2}$ by Fatou's Lemma. If by contradiction $B^{2}>[u]^{2}$, we may use \eqref{HZman} to deduce that $\langle J'_{\mu}(u), u\rangle<0$. Moreover, conditions $(f_1)$-$(f_2)$ imply that $\langle J'_{\mu}(t u), t u\rangle>0$ for small $t>0$. Then there exists $t_{0}\in (0, 1)$ such that $t_{0} u\in \N_{\mu}$ and $\langle J'_{\mu}(t_{0} u), t_{0} u\rangle=0$. Using Fatou's Lemma, $t_{0}\in (0, 1)$ and  $\frac{1}{4}f(t)t-\frac{1}{2}F(t)$ is increasing for $t>0$ (by $(f_3)$ and $(f_4)$) we get
\begin{align*}
c_{\mu}&\leq J_{\mu}(t_{0} u)-\frac{1}{4} \langle J'_{\mu}(t_{0} u), t_{0} u\rangle<\liminf_{n\rightarrow \infty} \left[J_{\mu}(u_{n})-\frac{1}{4} \langle J'_{\mu}(u_{n}), u_{n}\rangle\right]=c_{\mu}
\end{align*}
which gives a contradiction. Therefore $B^{2}= [u]^{2}$ and we deduce that $J'_{\mu}(u)=0$. Hence  $u\in \N_{\mu}$. Using the fact that $\langle J'_{\mu}(u), u^{-}\rangle=0$ and $(f_1)$ we can see that $u\geq 0$ in $\R^{3}$. Moreover we can argue as in Lemma \ref{moser} to infer that $u\in L^{\infty}(\R^{3}, \R)$. Since $u$ satisfies
$$
(-\Delta)^{s}u=(a+b[u]^{2})^{-1}[f(u^{2})u-\mu u]\in L^{\infty}(\R^{3}, \R),
$$
and $s>\frac{3}{4}>\frac{1}{2}$, we obtain $u\in C^{1, \gamma}(\R^{3}, \R)\cap L^{\infty}(\R^{3},\R)$, for some $\gamma>0$ (see \cite{S}) and that $u>0$ by the maximum principle. Now we prove that $J_{\mu}(u)=c_{\mu}$. Indeed, using $u\in \N_{\mu}$, $(f_3)$ and Fatou's Lemma we have
\begin{align*}
c_{\mu}&\leq J_{\mu}(u)-\frac{1}{4}\langle J'_{\mu}(u), u\rangle\\
&\leq \liminf_{n\rightarrow \infty} \left[\frac{1}{4}\|u_{n}\|_{\mu}^{2}+\int_{\R^{3}} \frac{1}{4} f(u^{2}_{n})u^{2}_{n}-\frac{1}{2} F(u^{2}_{n}) dx\right] \\
&=\liminf_{n\rightarrow \infty} J_{\mu}(u_{n})-\frac{1}{4}\langle J'_{\mu}(u_{n}), u_{n}\rangle \\
&=c_{\mu}.
\end{align*}
Now, we consider the case $u=0$. Since $c_{\mu}>0$ and $J_{\mu}$ is continuous, we can see that $\|u_{n}\|_{\mu}\nrightarrow 0$. From Lemma \ref{LionsFS} it follows that we can  define $v_{n}(x)=u_{n}(x+y_{n})$ such that $v_{n}\rightharpoonup v$ in $H^{s}_{\mu}$ for some $v\neq 0$. Then we can argue as in the previous case to get the thesis.
\end{proof}

\noindent
The next result shows an interesting relation between $c_{\e}$ and $c_{V_{0}}$.
\begin{lem}\label{AMlem1}
The numbers $c_{\e}$ and $c_{V_{0}}$ satisfy the following inequality
$$
\limsup_{\e\rightarrow 0} c_{\e}\leq c_{V_{0}}.
$$
\end{lem}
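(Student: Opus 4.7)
The plan is to bound $c_\e$ from above by testing $J_\e$ along a mountain-pass path built from a ground state of the limit autonomous problem and then let $\e\to 0$. By Lemma \ref{FS} applied with $\mu=V_0$, there exists a positive ground state $w\in\mathcal{N}_{V_0}$ of \eqref{AP0} with $J_{V_0}(w)=c_{V_0}$. Fix $x_0\in M$, so that $V(x_0)=V_0$, and choose $\delta>0$ small enough that $B_{2\delta}(x_0)\subset\Lambda$. Let $\eta\in C_c^\infty(\R^3,\R)$ be a cut-off with $0\leq\eta\leq 1$, $\eta\equiv 1$ on $B_\delta(0)$ and $\mathrm{supp}\,\eta\subset B_{2\delta}(0)$, and set
\[
w_\e(x):=\eta(\e x - x_0)\,w\!\left(x-\tfrac{x_0}{\e}\right),\qquad u_\e(x):=e^{\imath A(x_0)\cdot x}\,w_\e(x).
\]
Since $w_\e\in H^s(\R^3,\R)$ has compact support, a translation-variant of Lemma \ref{aux} shows $u_\e\in \h$.

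I would then establish the three convergences, as $\e\to 0$,
\[
[u_\e]_{A_\e}^2\to [w]^2,\qquad \int_{\R^3} V_\e|u_\e|^2\,dx\to V_0\|w\|_{L^2(\R^3)}^2,\qquad \int_{\R^3} G(\e x,|u_\e|^2)\,dx\to \int_{\R^3} F(w^2)\,dx.
\]
The second is immediate from $|u_\e|=|w_\e|$, the change of variable $x\mapsto x+x_0/\e$, dominated convergence, and continuity of $V$ at $x_0$. For the third, since $\mathrm{supp}\,u_\e\subset\Lambda_\e$ for $\e$ small, $G(\e x,\cdot)\equiv F$ on the support, and the limit follows from $w_\e(\cdot+x_0/\e)\to w$ in $L^p(\R^3,\R)$ for $p\in[2,\2)$, together with $(f_1)$--$(f_2)$.

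The main technical obstacle is the first convergence, where the magnetic phase must be controlled. After the change of variables $x=x'+x_0/\e$, $y=y'+x_0/\e$ and absorbing the constant phase $e^{\imath A(x_0)\cdot x_0/\e}$, one rewrites
\[
[u_\e]_{A_\e}^2=\iint_{\R^6}\frac{|\tilde w_\e(x')-e^{\imath(x'-y')\cdot\Delta A_\e(x',y')}\tilde w_\e(y')|^2}{|x'-y'|^{3+2s}}\,dx'\,dy',
\]
with $\tilde w_\e(x'):=\eta(\e x')w(x')$ and $\Delta A_\e(x',y'):=A(\e(x'+y')/2+x_0)-A(x_0)$. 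The elementary inequality $|a-b|^2\leq(1+\tau)|a|^2+(1+1/\tau)|b|^2$ applied with $a=\tilde w_\e(x')-\tilde w_\e(y')$ and $b=(1-e^{\imath(x'-y')\cdot\Delta A_\e})\tilde w_\e(y')$ bounds the integral above by $(1+\tau)[\tilde w_\e]^2$ plus a remainder of the form $R_\e=C_\tau\iint|\tilde w_\e(y')|^2|e^{\imath(x'-y')\cdot\Delta A_\e}-1|^2|x'-y'|^{-3-2s}\,dx'dy'$. Using $|e^{\imath\theta}-1|^2\leq\min(\theta^2,4)$ and the H\"older regularity of $A$, I would show $R_\e\to 0$ by splitting the $y'$-domain into $|y'|\leq R$ (where a further split in $|x'-y'|$ uses the H\"older bound $|\Delta A_\e|\leq C(\e R)^\alpha\to 0$ on the short-range part, and a uniform $|x'-y'|^{-3-2s}$ tail bound on the long-range part) and $|y'|>R$ (controlled by $w\in L^2(\R^3)$), sending successively $\e\to 0$, $R\to\infty$, and $\tau\to 0^+$. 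Combined with $\tilde w_\e\to w$ in $H^s(\R^3,\R)$ and a mirrored lower bound, this gives $[u_\e]_{A_\e}^2\to[w]^2$.

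Once the three asymptotics are in hand, the standard Nehari argument (using $(g_1)$, $(g_4)$, and the fact that $g(\e x,\cdot)=f$ on $\mathrm{supp}\,u_\e\subset\Lambda_\e$) produces, for every small $\e$, a unique $t_\e>0$ with $t_\e u_\e\in\mathcal{N}_\e$, and the convergences above together with $w\in\mathcal{N}_{V_0}$ force $t_\e\to 1$. Consequently
\[
c_\e\leq\max_{t\geq 0}J_\e(t u_\e)=J_\e(t_\e u_\e)\longrightarrow J_{V_0}(w)=c_{V_0},
\]
which yields the desired inequality $\limsup_{\e\to 0}c_\e\leq c_{V_0}$.
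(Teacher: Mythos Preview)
Your proposal is correct and follows essentially the same strategy as the paper: build a competitor by cutting off a ground state $w$ of the autonomous problem, multiply by the constant phase $e^{\imath A(x_0)\cdot x}$, prove convergence of each energy component, and then use the Nehari structure to force $t_\e\to 1$ and conclude $c_\e\leq J_\e(t_\e u_\e)\to c_{V_0}$.

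The only noteworthy difference lies in how $[u_\e]_{A_\e}^2\to[w]^2$ is established. The paper expands exactly as $[\eta_\e w]^2+X_\e+2Y_\e$ and, to show $X_\e\to 0$, splits at $|x-y|=\e^{-\beta}$ and invokes the \emph{polynomial decay} $w(x)\leq C|x|^{-3-2s}$ (obtained from regularity of $w$) to control the term $\int|y|^{2\alpha}w^2(y)\,dy$. Your route via the $\tau$-inequality and the splitting in $|y'|$ is slightly more elementary: the region $|y'|>R$ is handled using only $w\in L^2$, because on the support of $\tilde w_\e$ one has $|\e y'|\leq 2\delta$, which keeps $|\Delta A_\e|$ bounded and makes the inner $x'$-integral uniformly finite. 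This avoids any appeal to the decay of $w$.

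One point in your sketch should be made precise: in the region $|y'|\leq R$, the ``long-range'' part in $|x'-y'|$ contributes a term of size $CL^{-2s}$ (where $L$ is the split radius), which does not vanish as $\e\to 0$ or $R\to\infty$. You therefore need to treat $L$ as an additional large parameter and send it to infinity (or, as the paper does, take an $\e$-dependent threshold $|x-y|\gtrless \e^{-\beta}$), and adjust the stated order of limits accordingly. With that clarification the argument is complete.
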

\begin{proof}
In the light of Lemma \ref{FS}, we can find a positive ground state $w\in H^{s}_{V_{0}}$ to \eqref{AP0}, that is $J'_{V_{0}}(w)=0$ and $J_{V_{0}}(w)=c_{V_{0}}$. 
Since $w\in C^{1, \gamma}(\R^{3}, \R)\cap L^{\infty}(\R^{3},\R)$, for some $\gamma>0$, we get $|w(x)|\rightarrow 0$ as $|x|\rightarrow \infty$. Observing that $w$ satisfies
$$
(-\Delta)^{s}w+\frac{V_{0}}{a+bM^{2}}w=(a+b[w]^{2})^{-1}[f(w^{2})w-V_{0} w]+\frac{V_{0}}{a+bM^{2}}w \mbox{ in } \R^{3},
$$
where $0<a\leq a+b[u]^{2}\leq a+bM^{2}$, we can
argue as in Lemma $4.3$ in \cite{FQT} to deduce the following decay estimate
\begin{equation}\label{remdecay}
0<w(x)\leq \frac{C}{|x|^{3+2s}} \quad \mbox{ for } |x|>>1.
\end{equation}
Now, let $\eta\in C^{\infty}_{c}(\R^{3}, [0,1])$ be a cut-off function such that $\eta=1$ in a neighborhood of zero $B_{\frac{\delta}{2}}$ and $\supp(\eta)\subset B_{\delta}\subset \Lambda$ for some $\delta>0$. 
Let us define $w_{\e}(x):=\eta_{\e}(x)w(x) e^{\imath A(0)\cdot x}$, with $\eta_{\e}(x)=\eta(\e x)$ for $\e>0$, and we note that $|w_{\e}|=\eta_{\e}w$ and $w_{\e}\in \h$ in view of Lemma \ref{aux}. 
Let us verify that
\begin{equation}\label{limwr}
\lim_{\e\rightarrow 0}\|w_{\e}\|^{2}_{\e}=\|w\|_{V_{0}}^{2}\in(0, \infty).
\end{equation}
From the Dominated Convergence Theorem it follows that $\int_{\R^{3}} V_{\e}(x)|w_{\e}|^{2}dx\rightarrow \int_{\R^{3}} V_{0} |w|^{2}dx$. Thus, it is only need to prove that
\begin{equation}\label{limwr*}
\lim_{\e\rightarrow 0}[w_{\e}]^{2}_{A_{\e}}=[w]^{2}.
\end{equation}
By Lemma $5$ in \cite{PP}, we know that 
\begin{equation}\label{PPlem}
[\eta_{\e} w]\rightarrow [w] \mbox{ as } \e\rightarrow 0.
\end{equation}
On the other hand
\begin{align*}
[w_{\e}]_{A_{\e}}^{2}
&=\iint_{\R^{6}} \frac{|e^{\imath A(0)\cdot x}\eta_{\e}(x)w(x)-e^{\imath A_{\e}(\frac{x+y}{2})\cdot (x-y)}e^{\imath A(0)\cdot y} \eta_{\e}(y)w(y)|^{2}}{|x-y|^{3+2s}} dx dy \nonumber \\
&=[\eta_{\e} w]^{2}
+\iint_{\R^{6}} \frac{\eta_{\e}^2(y)w^2(y) |e^{\imath [A_{\e}(\frac{x+y}{2})-A(0)]\cdot (x-y)}-1|^{2}}{|x-y|^{3+2s}} dx dy\\
&\quad+2\Re \iint_{\R^{6}} \frac{(\eta_{\e}(x)w(x)-\eta_{\e}(y)w(y))\eta_{\e}(y)w(y)(1-e^{-\imath [A_{\e}(\frac{x+y}{2})-A(0)]\cdot (x-y)})}{|x-y|^{3+2s}} dx dy \\
&=: [\eta_{\e} w]^{2}+X_{\e}+2Y_{\e}.
\end{align*}
In the light of 
$|Y_{\e}|\leq [\eta_{\e} w] \sqrt{X_{\e}}$ and \eqref{PPlem}, it is enough to see that $X_{\e}\rightarrow 0$ as $\e\rightarrow 0$ to deduce that \eqref{limwr*} holds.
For all $0<\beta<\alpha/({1+\alpha-s})$, we get
\begin{equation}\label{Ye}
\begin{split}
X_{\e}
&\leq \int_{\R^{3}} w^{2}(y) dy \int_{|x-y|\geq\e^{-\beta}} \frac{|e^{\imath [A_{\e}(\frac{x+y}{2})-A(0)]\cdot (x-y)}-1|^{2}}{|x-y|^{3+2s}} dx\\
&\quad+\int_{\R^{3}} w^{2}(y) dy  \int_{|x-y|<\e^{-\beta}} \frac{|e^{\imath [A_{\e}(\frac{x+y}{2})-A(0)]\cdot (x-y)}-1|^{2}}{|x-y|^{3+2s}} dx \\
&=:X^{1}_{\e}+X^{2}_{\e}.
\end{split}
\end{equation}
Since $|e^{\imath t}-1|^{2}\leq 4$ and $w\in H^{s}(\R^{3}, \R)$, we have
\begin{equation}\label{Ye1}
X_{\e}^{1}\leq C \int_{\R^{3}} w^{2}(y) dy \int_{\e^{-\beta}}^\infty \rho^{-1-2s} d\rho\leq C \e^{2\beta s} \rightarrow 0.
\end{equation}
Observing that $|e^{\imath t}-1|^{2}\leq t^{2}$ for all $t\in \R$, $A\in C^{0,\alpha}(\R^3,\R^3)$ for $\alpha\in(0,1]$, and $|x+y|^{2}\leq 2(|x-y|^{2}+4|y|^{2})$, we can deduce that
\begin{equation}\label{Ye2}
\begin{split}
X^{2}_{\e}&
	\leq \int_{\R^{3}} w^{2}(y) dy  \int_{|x-y|<\e^{-\beta}} \frac{|A_{\e}\left(\frac{x+y}{2}\right)-A(0)|^{2} }{|x-y|^{3+2s-2}} dx \\
	&\leq C\e^{2\alpha} \int_{\R^{3}} w^{2}(y) dy  \int_{|x-y|<\e^{-\beta}} \frac{|x+y|^{2\alpha} }{|x-y|^{3+2s-2}} dx \\
	&\leq C\e^{2\alpha} \left(\int_{\R^{3}} w^{2}(y) dy  \int_{|x-y|<\e^{-\beta}} \frac{1 }{|x-y|^{3+2s-2-2\alpha}} dx\right.\\
	&\quad+ \left. \int_{\R^{3}} |y|^{2\alpha} w^{2}(y) dy  \int_{|x-y|<\e^{-\beta}} \frac{1}{|x-y|^{3+2s-2}} dx\right) \\
	&=: C\e^{2\alpha} (X^{2, 1}_{\e}+X^{2, 2}_{\e}).
	\end{split}
	\end{equation}	
	Hence
	\begin{equation}\label{Ye21}
	X^{2, 1}_{\e}
	= C  \int_{\R^{3}} w^{2}(y) dy \int_0^{\e^{-\beta}} \rho^{1+2\alpha-2s} d\rho
	\leq C\e^{-2\beta(1+\alpha-s)}.
	\end{equation}
	On the other hand, using \eqref{remdecay}, we have
	\begin{equation}\label{Ye22}
	\begin{split}
	 X^{2, 2}_{\e}
	 &\leq C  \int_{\R^{3}} |y|^{2\alpha} w^{2}(y) dy \int_0^{\e^{-\beta}}\rho^{1-2s} d\rho  \\
	&\leq C \e^{-2\beta(1-s)} \left[\int_{B_1(0)}  w^{2}(y) dy + \int_{B_1^c(0)} \frac{1}{|y|^{2(3+2s)-2\alpha}} dy \right]  \\
	&\leq C \e^{-2\beta(1-s)}.
	\end{split}
	\end{equation}
	From \eqref{Ye}, \eqref{Ye1}, \eqref{Ye2}, \eqref{Ye21} and \eqref{Ye22} it follows that that $X_{\e}\rightarrow 0$,  that is \eqref{limwr} holds true.  
Now, let $t_{\e}>0$ be the unique number such that 
\begin{equation*}
J_{\e}(t_{\e} w_{\e})=\max_{t\geq 0} J_{\e}(t w_{\e}).
\end{equation*}
Clearly $t_{\e}$ satisfies 
\begin{equation}\label{AS1}
t_{\e}^{2}\|w_{\e}\|_{\e}^{2}+t_{\e}^{4}[w_{\e}]^{4}_{A_{\e}}=\int_{\R^{3}} g(\e x, t_{\e}^{2} |w_{\e}|^{2}) |t_{\e}w_{\e}|^{2}dx=\int_{\R^{3}} f(t_{\e}^{2} |w_{\e}|^{2}) |t_{\e}w_{\e}|^{2} dx,
\end{equation}
where we used $supp(\eta)\subset \Lambda$ and $g(t)=f(t)$ on $\Lambda$.\\
Now, we show that $t_{\e}\rightarrow 1$ as $\e\rightarrow 0$. Since $\eta=1$ in $B_{\frac{\delta}{2}}$, $w$ is a continuous positive function and $\frac{f(t)}{t}$ is increasing for $t>0$ by $(f_4)$, we can deduce
\begin{align*}
\frac{1}{t_{\e}^{2}}\|w_{\e}\|_{\e}^{2}+b [w_{\e}]_{A_{\e}}^{4} &= \int_{\R^{3}} \frac{f(t_{\e}^{2}|w_{\e}|^{2})}{t_{\e}^{2}|w_{\e}|^{2}} |w_{\e}|^{4} dx\\ 
&\geq \frac{f(t_{\e}^{2}\alpha^{2}_{0})}{t_{\e}^{2}\alpha^{2}_{0}}\int_{B_{\frac{\delta}{2}}}|w|^{4}dx
\end{align*}
where $\alpha_{0}:=\min_{\bar{B}_{\frac{\delta}{2}}} w>0$. Therefore, if $t_{\e}\rightarrow \infty$ as $\e\rightarrow 0$, we can use \eqref{limwr} to see that $b[w]^{2}=\infty$, an absurd.
On the other hand, if $t_{\e}\rightarrow 0$ as $\e\rightarrow 0$, by \eqref{AS1}, the growth assumptions on $g$ and  \eqref{limwr} yield $\|w\|_{V_{0}}^{2}= 0$, which is impossible.
Thus, $t_{\e}\rightarrow t_{0}\in (0, \infty)$ as $\e\rightarrow 0$.

Letting the limit as $\e\rightarrow 0$ in \eqref{AS1} and by \eqref{limwr}, we can see that 
\begin{equation*}
\frac{1}{t_{0}^{2}}\|w\|_{V_{0}}^{2}+b[w]^{4}=\int_{\R^{3}} \frac{f(t_{0}^{2} w^{2})}{(t_{0}^{2}w^{2})} w^{4} dx.
\end{equation*}
By $w\in \mathcal{N}_{V_{0}}$ and 
 $(f_4)$, we can conclude that $t_{0}=1$. Applying the Dominated Convergence Theorem, we can see that $\lim_{\e\rightarrow 0} J_{\e}(t_{\e} w_{\e})=J_{V_{0}}(w)=c_{V_{0}}$.
Using $c_{\e}\leq \max_{t\geq 0} J_{\e}(t w_{\e})=J_{\e}(t_{\e} w_{\e})$, we can infer  that
$\limsup_{\e\rightarrow 0} c_{\e}\leq c_{V_{0}}$.
\end{proof}


\section{Multiplicity result for the modified problem}
In this section we make use of the Ljusternik-Schnirelmann category theory to obtain multiple solutions to \eqref{MPe}. 
In particular, we relate the number of positive solutions of \eqref{MPe} to the topology of the set $M$.
For this reason, we take $\delta>0$ such that
$$
M_{\delta}=\{x\in \R^{3}: {\rm dist}(x, M)\leq \delta\}\subset \Lambda,
$$
and we consider $\eta\in C^{\infty}_{0}(\R_{+}, [0, 1])$ such that $\eta(t)=1$ if $0\leq t\leq \frac{\delta}{2}$ and $\eta(t)=0$ if $t\geq \delta$.\\
For any $y\in \Lambda$, we introduce (see \cite{AD})
$$
\Psi_{\e, y}(x)=\eta(|\e x-y|) w\left(\frac{\e x-y}{\e}\right)e^{\imath \tau_{y} \left( \frac{\e x-y}{\e} \right)},
$$
where $\tau_{y}(x)=\sum_{j=1}^{3} A_{j}(x)x_{j}$ and $w\in H^{s}(\R^{3})$ is a positive ground state solution to the autonomous problem \eqref{AP0} (see Lemma \ref{FS}), and
let $t_{\e}>0$ be the unique number such that 
$$
\max_{t\geq 0} J_{\e}(t \Psi_{\e, y})=J_{\e}(t_{\e} \Psi_{\e, y}). 
$$
Finally, we consider $\Phi_{\e}: M\rightarrow \N_{\e}$ defined by setting
$$
\Phi_{\e}(y)= t_{\e} \Psi_{\e, y}.
$$
\begin{lem}\label{lem3.4}
The functional $\Phi_{\e}$ satisfies the following limit
\begin{equation*}
\lim_{\e\rightarrow 0} J_{\e}(\Phi_{\e}(y))=c_{V_{0}} \mbox{ uniformly in } y\in M.
\end{equation*}
\end{lem}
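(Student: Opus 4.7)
The plan is to argue by contradiction. Suppose the conclusion fails: then there exist $\tau_0>0$, $\e_n\to 0$ and $y_n\in M$ with
\[
\bigl|J_{\e_n}(\Phi_{\e_n}(y_n))-c_{V_0}\bigr|\geq \tau_0.
\]
Since $M$ is compact, up to a subsequence $y_n\to y_0\in M$, and in particular $V(y_0)=V_0$. The guiding principle is that, after the change of variable $z:=x-y_n/\e_n$, the function $\Psi_{\e_n,y_n}$ becomes a small perturbation of $w(z)e^{\imath\tau_{y_n}(z)}$ concentrated where $\eta(\e_n|z|)=1$, and the energy $J_{\e_n}(t_{\e_n}\Psi_{\e_n,y_n})$ should converge to $J_{V_0}(w)=c_{V_0}$.

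First I would show that
\[
\lim_{n\to\infty}\|\Psi_{\e_n,y_n}\|_{\e_n}^{2}=\|w\|_{V_0}^{2},\qquad \lim_{n\to\infty}[\Psi_{\e_n,y_n}]_{A_{\e_n}}^{2}=[w]^{2},
\]
uniformly in $y_n\in M$. The potential part follows from the substitution $z=x-y_n/\e_n$, the identity $|\Psi_{\e_n,y_n}(z+y_n/\e_n)|=\eta(\e_n|z|)w(z)$, the continuity of $V$ at $y_0$ and the Dominated Convergence Theorem (with dominating function $\|V\|_{L^\infty(M_\delta)}w^2$, noting $V$ is continuous on the compact $M_\delta$). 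The Gagliardo part is the more delicate piece: one adapts the splitting used in Lemma \ref{AMlem1}, namely decomposing
\[
[\Psi_{\e_n,y_n}]_{A_{\e_n}}^{2}=[\eta_{\e_n}w]^{2}+X_{\e_n}+2Y_{\e_n},
\]
where $X_{\e_n}$ measures the effect of the phase $e^{\imath[A_{\e_n}((x+y)/2)-A(0)]\cdot(x-y)}$ around the translated centre $y_n/\e_n$, and invoking the H\"older regularity of $A$ together with the decay estimate \eqref{remdecay} for $w$. Compactness of $M$ converts the pointwise convergence into uniform convergence.

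Next I would show $t_{\e_n}\to 1$. Since $\operatorname{supp}(\Psi_{\e_n,y_n})\subset \Lambda_{\e_n}$ for $n$ large (because $|\e_n x-y_n|\leq \delta$ on the support of $\eta$ and $M_\delta\subset\Lambda$), one has $g(\e x,|\Psi_{\e_n,y_n}|^2)=f(|\Psi_{\e_n,y_n}|^2)$, so the Nehari identity reads
\[
t_{\e_n}^{2}\|\Psi_{\e_n,y_n}\|_{\e_n}^{2}+t_{\e_n}^{4}b[\Psi_{\e_n,y_n}]_{A_{\e_n}}^{4}=\int_{\R^{3}}f(t_{\e_n}^{2}|\Psi_{\e_n,y_n}|^{2})\,t_{\e_n}^{2}|\Psi_{\e_n,y_n}|^{2}\,dx.
\]
Arguing as in Lemma \ref{AMlem1}: if $t_{\e_n}\to\infty$, the monotonicity in $(f_4)$ and positivity of $w$ on $B_{\delta/2}$ force $b[w]^{4}=\infty$, impossible; if $t_{\e_n}\to 0$, the subcritical growth $(f_1)$--$(f_2)$ combined with the limits above gives $\|w\|_{V_0}^{2}=0$, again impossible. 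Hence $t_{\e_n}$ is bounded and bounded away from zero, and passing to the limit in the identity, $(f_4)$ and the fact that $w\in \N_{V_0}$ single out $t_{\e_n}\to 1$.

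Finally, the Dominated Convergence Theorem applied to
\[
J_{\e_n}(t_{\e_n}\Psi_{\e_n,y_n})=\tfrac{t_{\e_n}^{2}}{2}\|\Psi_{\e_n,y_n}\|_{\e_n}^{2}+\tfrac{t_{\e_n}^{4}}{4}b[\Psi_{\e_n,y_n}]_{A_{\e_n}}^{4}-\tfrac{1}{2}\int_{\R^{3}}F(t_{\e_n}^{2}|\Psi_{\e_n,y_n}|^{2})\,dx
\]
yields $J_{\e_n}(\Phi_{\e_n}(y_n))\to J_{V_0}(w)=c_{V_0}$, contradicting the initial assumption. The main obstacle is the uniform-in-$y$ control of the diamagnetic Gagliardo seminorm $[\Psi_{\e_n,y_n}]_{A_{\e_n}}^{2}$: the phase depends on the sliding centre $y_n/\e_n$, so one must re-run the magnetic perturbation estimate of Lemma \ref{AMlem1} at each $y_n$ and exploit the compactness of $M$ (and uniform H\"older modulus of $A$ on compacts) to convert the individual convergences into the required uniform one.
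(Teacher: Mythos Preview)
Your proposal is correct and follows essentially the same route as the paper: contradiction, convergence of $\|\Psi_{\e_n,y_n}\|_{\e_n}^{2}$ and $[\Psi_{\e_n,y_n}]_{A_{\e_n}}^{2}$ via the magnetic perturbation splitting of Lemma~\ref{AMlem1} (which the paper packages as a reference to \cite[Lemma~4.1]{AD}), then $t_{\e_n}\to 1$ by the same two-sided bounds, and finally Dominated Convergence. The only cosmetic differences are that the paper does not bother extracting a convergent subsequence of $(y_n)$ (it is unnecessary once you work along the fixed sequence), and your phrase ``uniformly in $y_n\in M$'' is superfluous at that point since you are already on a specific sequence.
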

\begin{proof}
Assume by contradiction that there exist $\delta_{0}>0$, $(y_{n})\subset M$ and $\e_{n}\rightarrow 0$ such that 
\begin{equation}\label{puac}
|J_{\e_{n}}(\Phi_{\e_{n}}(y_{n}))-c_{V_{0}}|\geq \delta_{0}.
\end{equation}
Using Lemma $4.1$ in \cite{AD} and the Dominated Convergence Theorem we can observe that 
\begin{align}\begin{split}\label{nio3}
&\| \Psi_{\e_{n}, y_{n}} \|^{2}_{\e_{n}}\rightarrow \|w\|^{2}_{V_{0}}\in (0, \infty). 
\end{split}\end{align}
On the other hand, since $\langle J'_{\e_{n}}(\Phi_{\e_{n}}(y_{n})),\Phi_{\e_{n}}(y_{n})\rangle=0$ and using the change of variable $\displaystyle{z=\frac{\e_{n}x-y_{n}}{\e_{n}}}$ it follows that
\begin{align*}
&t_{\e_{n}}^{2}\|\Psi_{\e_{n}, y_{n}}\|_{\e_{n}}^{2} +b t_{\e_{n}}^{4} [\Psi_{\e_{n}, y_{n}}]^{4}_{A_{\e_{n}}} \nonumber \\
&=\int_{\R^{3}} g(\e_{n}z+y_{n}, |t_{\e_{n}}\eta(|\e_{n}z|)w(z)|^{2}) |t_{\e_{n}}\eta(|\e_{n}z|)w(z)|^{2} dz.
\end{align*}
If $z\in B_{\frac{\delta}{\e_{n}}}(0)\subset M_{\delta}\subset \Lambda$, then $\e_{n} z+y_{n}\in B_{\delta}(y_{n})\subset M_{\delta}\subset \Lambda_{\e}$. Since $g(x,t)=f(t)$ for all $x\in \Lambda$ and $\eta(t)=0$ for $t\geq \delta$, we have
\begin{align}\label{1nio}
&t_{\e_{n}}^{2}\|\Psi_{\e_{n}, y_{n}}\|_{\e_{n}}^{2} +b t_{\e_{n}}^{4} [\Psi_{\e_{n}, y_{n}}]^{4}_{A_{\e_{n}}}\nonumber \\
&=\int_{\R^{3}} f(|t_{\e_{n}}\eta(|\e_{n}z|)w(z)|^{2}) |t_{\e_{n}}\eta(|\e_{n}z|)w(z)|^{2}.
\end{align}
In view of $\eta=1$ in $B_{\frac{\delta}{2}}(0)\subset B_{\frac{\delta}{\e_{n}}}(0)$ for all $n$ large enough and \eqref{1nio} we can deduce that
\begin{align}\label{nioo}
\frac{1}{t_{\e_{n}}^{2}} \|\Psi_{\e_{n}, y_{n}}\|_{\e_{n}}^{2}+ b[\Psi_{\e_{n}, y_{n}}]^{4}_{A_{\e_{n}}} 
&=\int_{\R^{3}} \frac{f(|t_{\e_{n}}\Psi_{\e_{n},y_{n}}|^{2})}{|t_{\e_{n}}\Psi_{\e_{n}, y_{n}}|^{2}}  |\Psi_{\e_{n}, y_{n}}|^{4}dx \nonumber \\
&\geq \int_{B_{\frac{\delta}{2}}(0)} \frac{f(|t_{\e_{n}}\eta(|\e_{n}z|)w(z)|^{2})}{|t_{\e_{n}}\eta(|\e_{n}z|)w(z)|^{2}} (\eta(|\e_{n}z|)w(z))^{4}dz \nonumber \\
&=\int_{B_{\frac{\delta}{2}}(0)} \frac{f(|t_{\e_{n}}w(z)|^{2})}{|t_{\e_{n}}w(z)|^{2}} w(z)^{4}dz \nonumber \\
&\geq   \frac{f(|t_{\e_{n}}w(\hat{z})|^{2})}{|t_{\e_{n}}w(\hat{z})|^{2}}w(\hat{z})^{4} |B_{\frac{\delta}{2}}(0)|, 
\end{align}
where
\begin{equation*}
w(\hat{z})=\min_{z\in B_{\frac{\delta}{2}}} w(z)>0.
\end{equation*} 
Now, assume by contradiction that $t_{\e_{n}}\rightarrow \infty$. 
This fact, \eqref{nioo} and \eqref{nio3} yield 
$$
b[w]^{4}=\infty,
$$
that is a contradiction.
Hence, $(t_{\e_{n}})$ is bounded and, up to subsequence, we may assume that $t_{\e_{n}}\rightarrow t_{0}$ for some $t_{0}\geq 0$.  
In particular $t_{0}>0$. In fact, if $t_{0}=0$, we can see that \eqref{uNr} and \eqref{1nio} imply that
$$
\min\{a, 1\} r\leq \int_{\R^{3}} f(|t_{\e_{n}}\eta(|\e_{n}z|)w(z)|^{2}) |t_{\e_{n}}\eta(|\e_{n}z|)w(z)|^{2}.
$$
In view of assumptions $(f_1)$-$(f_2)$ and \eqref{nio3} we can deduce that $t_{0}$ can not be zero.
Hence $t_{0}>0$.
Thus, letting the limit as $n\rightarrow \infty$ in \eqref{1nio}, we can see that
\begin{align*}
\frac{1}{t_{0}^{2}}\|w\|^{2}_{V_{0}}+b[w]^{4}=\int_{\R^{3}} \frac{f((t_{0} w)^{2})}{(t_{0}w)^{2}} \,w^{4} \, dx.
\end{align*}
Taking into account $w\in \N_{0}$ and using the fact that $\frac{f(t)}{t}$ is increasing by $(f_4)$, we can infer that $t_{0}=1$.
Letting the limit as $n\rightarrow \infty$ and using $t_{\e_{n}}\rightarrow 1$ 
we can conclude that
$$
\lim_{n\rightarrow \infty} J_{\e_{n}}(\Phi_{\e_{n}, y_{n}})=J_{V_{0}}(w)=c_{V_{0}},
$$
which provides a contradiction in view of \eqref{puac}.
\end{proof}

\noindent
For any $\delta>0$, we take $\rho=\rho(\delta)>0$ such that $M_{\delta}\subset B_{\rho}$, and we consider $\varUpsilon: \R^{3}\rightarrow \R^{3}$ defined by setting
\begin{equation*}
\varUpsilon(x)=
\left\{
\begin{array}{ll}
x &\mbox{ if } |x|<\rho \\
\frac{\rho x}{|x|} &\mbox{ if } |x|\geq \rho.
\end{array}
\right.
\end{equation*}
We define the barycenter map $\beta_{\e}: \N_{\e}\rightarrow \R^{3}$ as follows
\begin{align*}
\beta_{\e}(u)=\frac{\displaystyle{\int_{\R^{3}} \varUpsilon(\e x)|u(x)|^{4} \,dx}}{\displaystyle{\int_{\R^{3}} |u(x)|^{4} \,dx}}.
\end{align*}

\noindent
Arguing as Lemma $4.3$ in \cite{AD}, it is easy to see that the function $\beta_{\e}$ verifies the following limit:
\begin{lem}\label{lem3.5N}
\begin{equation*}
\lim_{\e \rightarrow 0} \beta_{\e}(\Phi_{\e}(y))=y \mbox{ uniformly in } y\in M.
\end{equation*}
\end{lem}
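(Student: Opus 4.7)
The plan is to argue by contradiction, reduce to the behaviour of a single integral via the natural change of variables $z=x-y/\e$, and then apply the dominated convergence theorem. Since the $t_{\e}^{4}$ factor appears both in numerator and denominator of $\beta_{\e}$, it plays no role and can be ignored from the start.

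Assume the conclusion fails. Then there exist $\delta_{0}>0$, $\e_{n}\to 0$ and $y_{n}\in M$ with
$|\beta_{\e_{n}}(\Phi_{\e_{n}}(y_{n}))-y_{n}|\geq \delta_{0}$. Since $M$ is compact, up to a subsequence $y_{n}\to y_{0}\in M\subset \Lambda$. Performing the change of variables $z=x-y_{n}/\e_{n}$ in both integrals defining $\beta_{\e_{n}}$, and using $|\Phi_{\e_{n}}(y_{n})(x)|=t_{\e_{n}}\eta(|\e_{n}x-y_{n}|)w((\e_{n}x-y_{n})/\e_{n})$, I get the identity
\begin{equation*}
\beta_{\e_{n}}(\Phi_{\e_{n}}(y_{n}))-y_{n}=\frac{\displaystyle\int_{\R^{3}}\bigl[\varUpsilon(\e_{n}z+y_{n})-y_{n}\bigr]\,\eta(|\e_{n}z|)^{4}\,w(z)^{4}\,dz}{\displaystyle\int_{\R^{3}}\eta(|\e_{n}z|)^{4}\,w(z)^{4}\,dz}.
\end{equation*}

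Now I would analyse the two integrals separately via dominated convergence. For the denominator: $\eta(|\e_{n}z|)^{4}w(z)^{4}\to w(z)^{4}$ pointwise (since $\eta\equiv 1$ in a neighbourhood of $0$), and $\eta(|\e_{n}z|)^{4}w(z)^{4}\leq w(z)^{4}\in L^{1}(\R^{3})$ (recall $w\in H^{s}(\R^{3})\cap L^{\infty}(\R^{3})$), hence the denominator converges to $\|w\|_{L^{4}}^{4}>0$. For the numerator: since $M_{\delta}\subset B_{\rho}$ and $y_{n}\in M$, the point $\e_{n}z+y_{n}$ converges to $y_{0}\in M\subset B_{\rho}$ for every fixed $z$; consequently, for $n$ large (depending on $z$), $\e_{n}z+y_{n}\in B_{\rho}$ and therefore $\varUpsilon(\e_{n}z+y_{n})=\e_{n}z+y_{n}\to y_{0}$. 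Thus the integrand converges pointwise to $(y_{0}-y_{0})\,w(z)^{4}=0$, while it is dominated by $2\rho\, w(z)^{4}\in L^{1}(\R^{3})$ (because $|\varUpsilon|\leq \rho$ and $|y_{n}|\leq \rho$). Hence the numerator tends to $0$.

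Combining the two convergences yields $\beta_{\e_{n}}(\Phi_{\e_{n}}(y_{n}))-y_{n}\to 0$, contradicting the lower bound $\delta_{0}$. The only subtle point is the one just used: one has to verify that $\varUpsilon$ acts as the identity on the relevant integration set in the limit, which works because $y_{n}$ stays well inside $B_{\rho}$ (as $M_{\delta}\subset B_{\rho}$) and the integrand is concentrated near $z=0$ in $z$--coordinates. Everything else is a routine application of dominated convergence; no compactness-type property of $\Phi_{\e}$ beyond the algebraic structure of the change of variable is required.
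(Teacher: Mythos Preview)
Your proof is correct and follows precisely the standard argument that the paper invokes by reference to Lemma~4.3 in \cite{AD}: contradiction, the change of variable $z=x-y_{n}/\e_{n}$, cancellation of $t_{\e_{n}}^{4}$, and dominated convergence on numerator and denominator. One minor remark: the passage to a subsequence $y_{n}\to y_{0}$ is not actually needed, since $M\subset B_{\rho}$ is compact and hence contained in some $B_{\rho'}$ with $\rho'<\rho$; then for each fixed $z$ one has $\e_{n}z+y_{n}\in B_{\rho}$ for all large $n$ and $\varUpsilon(\e_{n}z+y_{n})-y_{n}=\e_{n}z\to 0$ directly---but your version with the subsequence is equally valid.
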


\noindent
The next compactness result will play a fundamental role to prove that the solutions of \eqref{MPe} are also solution to \eqref{Pe}.
\begin{lem}\label{prop3.3}
Let $\e_{n}\rightarrow 0$ and $(u_{n})\subset \mathcal{N}_{\e_{n}}$ be such that $J_{\e_{n}}(u_{n})\rightarrow c_{V_{0}}$. Then there exists $(\tilde{y}_{n})\subset \R^{3}$ such that $v_{n}(x)=|u_{n}|(x+\tilde{y}_{n})$ has a convergent subsequence in $H^{s}_{V_{0}}$. Moreover, up to a subsequence, $y_{n}=\e_{n} \tilde{y}_{n}\rightarrow y_{0}$ for some $y_{0}\in M$.
\end{lem}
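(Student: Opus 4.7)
The plan is a four-step concentration-compactness argument.

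\textbf{Step 1 (Boundedness and non-vanishing).} Using $u_n\in\mathcal{N}_{\e_n}$, $J_{\e_n}(u_n)\to c_{V_0}$, and the computation of Step 1 of Lemma \ref{PSc} (based on $(g_3)$ and $k>2$), $(u_n)$ is bounded in $H^s_{\e_n}$. By the diamagnetic inequality (Lemma \ref{DI}), $(|u_n|)$ is bounded in $H^s(\R^3,\R)$. Assume, for contradiction, that $\sup_{y\in\R^3}\int_{B_R(y)}|u_n|^2\,dx\to 0$ for some $R>0$. Lemma \ref{Lions} then gives $|u_n|\to 0$ in $L^r(\R^3,\R)$ for $r\in(2,2^*_s)$. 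Combining subcritical growth of $f$ on $\Lambda_{\e_n}$ with $(g_3)$-(ii) on $\Lambda_{\e_n}^c$ in the identity $\langle J'_{\e_n}(u_n),u_n\rangle=0$ yields $(1-1/k)\min\{a,1\}\|u_n\|_{\e_n}^2\to 0$, contradicting $c_{V_0}>0$. Hence there exist $R,\beta>0$ and $(\tilde y_n)\subset\R^3$ with $\int_{B_R(\tilde y_n)}|u_n|^2\geq\beta$; setting $v_n(x):=|u_n|(x+\tilde y_n)$, one extracts $v_n\rightharpoonup v\neq 0$ in $H^s(\R^3,\R)$ along a subsequence.

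\textbf{Step 2 (Compactness of $y_n:=\e_n\tilde y_n$ in $\overline{\Lambda}$).} Suppose either $|y_n|\to\infty$ or $y_n\to y_0\in\R^3\setminus\overline{\Lambda}$. Since $\overline{\Lambda}$ is compact, in both cases for every fixed $R>0$ the set $(\Lambda_{\e_n}-\tilde y_n)\cap B_R$ is empty for $n$ large. The translated Nehari identity together with the diamagnetic inequality $[u_n]_{A_{\e_n}}^2\geq [v_n]^2$ and the splitting of the nonlinear term via $\Lambda_{\e_n}/\Lambda_{\e_n}^c$ give
\begin{equation*}
a[v_n]^2+\Bigl(1-\tfrac{1}{k}\Bigr)\int_{\R^3}V(\e_n z+y_n)v_n^2\,dz\leq\int_{\Lambda_{\e_n}-\tilde y_n}f(v_n^2)v_n^2\,dz.
\end{equation*}
By Fatou and $V\geq V_0$, the LHS has liminf $\geq a[v]^2+(1-1/k)V_0\int v^2>0$. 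The domain on the RHS eventually escapes every ball; combining subcritical growth, the $L^{2^*_s}$-bound of $(v_n)$, the $L^r_{\mathrm{loc}}$-convergence $v_n\to v$ from Theorem \ref{Sembedding}, and a Br\'ezis--Lieb-type argument, the RHS tends to $0$: a contradiction. Hence $y_n\to y_0\in\overline{\Lambda}$ along a subsequence.

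\textbf{Step 3 ($y_0\in M$ and strong convergence).} Passing to the limit in $\langle J'_{\e_n}(u_n),\varphi\rangle=0$ for $\varphi\in C_c^\infty$ via weak convergence and Fatou for $[u_n]_{A_{\e_n}}^2\to\ell^2\geq [v]^2$ (argued as in Lemma \ref{FS}), $v$ weakly solves the autonomous problem
\begin{equation*}
(a+b[v]^2)(-\Delta)^s v+V(y_0)v=f(v^2)v\quad\text{in }\R^3,
\end{equation*}
so $v\in\mathcal{N}_{V(y_0)}$. Fatou applied to $J_{\e_n}(u_n)-\tfrac{1}{\theta}\langle J'_{\e_n}(u_n),u_n\rangle$, combined with the diamagnetic inequality and the elementary monotonicity $\mu\mapsto c_\mu$, yields
\begin{equation*}
c_{V_0}=\lim_{n\to\infty}J_{\e_n}(u_n)\geq J_{V(y_0)}(v)-\tfrac{1}{\theta}\langle J'_{V(y_0)}(v),v\rangle\geq c_{V(y_0)}\geq c_{V_0}.
\end{equation*}
Equality throughout forces $V(y_0)=V_0$, hence $y_0\in M$, and $J_{V_0}(v)=c_{V_0}$. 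The equalities in the Fatou estimate also give $[v_n]^2\to [v]^2$ and $\int V(\e_n z+y_n)v_n^2\,dz\to V_0\int v^2\,dz$; together with $v_n\rightharpoonup v$ in the Hilbert space $H^s_{V_0}$ this upgrades to strong convergence $v_n\to v$ in $H^s_{V_0}$.

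The main obstacle is Step 2: the diamagnetic inequality is indispensable to reduce the complex-valued magnetic estimates to real-valued $H^s$-estimates on $v_n$, and the penalization bound $(g_3)$-(ii) on $\Lambda^c$ is precisely the tool that makes the translated Nehari inequality produce the contradiction with $v\neq 0$ when $y_n$ fails to converge into $\overline{\Lambda}$.
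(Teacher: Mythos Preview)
Your overall strategy is recognizable, but Step 3 contains a genuine gap that also undermines Step 2 as you have ordered things.

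In Step 3 you write that by ``passing to the limit in $\langle J'_{\e_n}(u_n),\varphi\rangle=0$'' one obtains that $v$ weakly solves the autonomous equation and hence $v\in\mathcal N_{V(y_0)}$. This does not follow. The functions $u_n$ are complex-valued and satisfy the \emph{magnetic} equation, whereas $v$ is the weak limit of $v_n=|u_n|(\cdot+\tilde y_n)$. Taking the modulus only yields a \emph{subsolution} inequality (this is exactly the Kato-type argument carried out in Lemma \ref{moser}), never an equation; there is no mechanism to convert weak convergence of $v_n$ into the statement that $v$ is a critical point of $J_{V(y_0)}$. The reference to ``argued as in Lemma \ref{FS}'' is misleading: in that lemma the sequence and its weak limit live in the same real, non-magnetic space and the equation passes to the limit directly, which is precisely what fails here. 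Consequently your Fatou chain for $y_0\in M$ and for the strong convergence $v_n\to v$ collapses.

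This also explains why Step 2 is incomplete. You need $\int_{\Lambda_{\e_n}-\tilde y_n} f(v_n^2)v_n^2\to 0$, but the domain $\Lambda_{\e_n}-\tilde y_n$ has measure $|\Lambda|\e_n^{-3}\to\infty$ and merely escapes every fixed ball; with only weak convergence and $L^{2^*_s}$-bounds there is no reason this integral vanishes, and the vague ``Br\'ezis--Lieb-type argument'' does not supply one.

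The paper proceeds in the opposite order and avoids both problems. It never claims that $v$ solves anything. Instead it projects $v_n$ onto $\mathcal N_{V_0}$ by choosing $t_n>0$ with $\tilde v_n:=t_n v_n\in\mathcal N_{V_0}$, and uses the diamagnetic inequality together with $V_0\le V$ and $(g_2)$ to get
\[
c_{V_0}\le J_{V_0}(\tilde v_n)\le \max_{t\ge 0} J_{\e_n}(t u_n)=J_{\e_n}(u_n)\to c_{V_0}.
\]
Thus $(\tilde v_n)\subset\mathcal N_{V_0}$ is a \emph{minimizing} sequence at level $c_{V_0}$, and the compactness established in Lemma \ref{FS} gives $\tilde v_n\to\tilde v$ strongly, hence $v_n\to v$ strongly in $H^s_{V_0}$. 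Only after this does the paper prove that $(y_n)$ is bounded with limit in $M$, and there the already-available strong convergence makes the integral over $\R^3\setminus B_{R/\e_n}$ vanish, closing the argument cleanly.
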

\begin{proof}
Using $\langle J'_{\e_{n}}(u_{n}), u_{n}\rangle=0$, $J_{\e_{n}}(u_{n})= c_{V_{0}}+o_{n}(1)$, Lemma \ref{AMlem1} and arguing as in the first part of Lemma \ref{PSc}, we can see that $\|u_{n}\|_{\e_{n}}\leq C$ for all $n\in \mathbb{N}$. Moreover, from Lemma \ref{DI}, we also know that $(|u_{n}|)$ is bounded in $H^{s}_{V_{0}}$.
Arguing as in the proof of Lemma \ref{LionsFS}, 
we can find a sequence $(\tilde{y}_{n})\subset \R^{3}$, and constants $R>0$ and $\beta>0$ such that
\begin{equation}\label{sacchi}
\liminf_{n\rightarrow \infty}\int_{B_{R}(\tilde{y}_{n})} |u_{n}|^{2} \, dx\geq \beta>0.
\end{equation}
Put $v_{n}(x)=|u_{n}|(x+\tilde{y}_{n})$. Hence, $(v_{n})$ is bounded in $H^{s}_{V_{0}}$ and we may assume that 
$v_{n}\rightharpoonup v\not\equiv 0$ in $H^{s}(\R^{3}, \R)$  as $n\rightarrow \infty$.
Fix $t_{n}>0$ such that $\tilde{v}_{n}=t_{n} v_{n}\in \mathcal{N}_{V_{0}}$. Using Lemma \ref{DI}, we can deduce that 
$$
c_{V_{0}}\leq J_{V_{0}}(\tilde{v}_{n})\leq \max_{t\geq 0}J_{\e_{n}}(tv_{n})= J_{\e_{n}}(u_{n})
$$
which together with Lemma \ref{AMlem1} yields $J_{V_{0}}(\tilde{v}_{n})\rightarrow c_{V_{0}}$. Moreover, $\tilde{v}_{n}\nrightarrow 0$ in $H^{s}_{V_{0}}$.
Since $(v_{n})$ and $(\tilde{v}_{n})$ are bounded in $H^{s}_{V_{0}}$ and $\tilde{v}_{n}\nrightarrow 0$  in $H^{s}_{V_{0}}$, we deduce that $t_{n}\rightarrow t^{*}\geq 0$. Indeed $t^{*}>0$ since $\tilde{v}_{n}\nrightarrow 0$  in $H^{s}_{V_{0}}$. From the uniqueness of the weak limit, we can deduce that $\tilde{v}_{n}\rightharpoonup \tilde{v}=t^{*}v\not\equiv 0$ in $H^{s}_{V_{0}}$. 
This combined with Lemma \ref{FS} implies that
\begin{equation}\label{elena}
\tilde{v}_{n}\rightarrow \tilde{v} \mbox{ in } H^{s}_{V_{0}}.
\end{equation} 
Consequently, $v_{n}\rightarrow v$ in $H^{s}_{V_{0}}$ as $n\rightarrow \infty$.

Now, we set $y_{n}=\e_{n}\tilde{y}_{n}$ and we show that $(y_{n})$ admits a subsequence, still denoted by $y_{n}$, such that $y_{n}\rightarrow y_{0}$ for some $y_{0}\in M$. We begin proving that $(y_{n})$ is bounded. Assume by contradiction that, up to a subsequence, $|y_{n}|\rightarrow \infty$ as $n\rightarrow \infty$. Choose $R>0$ such that $\Lambda \subset B_{R}(0)$. Then for $n$ large enough, we have $|y_{n}|>2R$, and for any $z\in B_{R/\e_{n}}$ it holds
$$
|\e_{n}z+y_{n}|\geq |y_{n}|-|\e_{n}z|>R.
$$
Taking into account $(u_{n})\subset \N_{\e_{n}}$, $(V_{1})$, Lemma \ref{DI}, the definition of $g$ and the change of variable $x\mapsto z+\tilde{y}_{n}$, we have
\begin{align*}
a[v_{n}]^{2}+\int_{\R^{3}} V_{0} v_{n}^{2}\, dx &\leq a[v_{n}]^{2}+\int_{\R^{3}} V_{0} v_{n}^{2}\, dx+b[v_{n}]^{4} \nonumber \\
&\leq\int_{\R^{3}} g(\e_{n} x+y_{n}, |v_{n}|^{2}) |v_{n}|^{2} \, dx \nonumber\\
&\leq \int_{B_{\frac{R}{\e_{n}}}(0)} \tilde{f}(|v_{n}|^{2}) |v_{n}|^{2} \, dx+\int_{\R^{3}\setminus B_{\frac{R}{\e_{n}}}(0)} f(|v_{n}|^{2}) |v_{n}|^{2}+|v_{n}|^{\2} \, dx \nonumber \\
&\leq \frac{V_{0}}{k}\int_{\R^{3}}|v_{n}|^{2}\, dx.
\end{align*}
which implies that
$v_{n}\rightarrow 0$ in $H^{s}_{V_{0}}$, that is a contradiction. Therefore, $(y_{n})$ is bounded and we may assume that $y_{n}\rightarrow y_{0}\in \R^{3}$. If $y_{0}\notin \overline{\Lambda}$, we can proceed as above to infer that $v_{n}\rightarrow 0$ in $H^{s}_{V_{0}}$, which is impossible. Thus $y_{0}\in \overline{\Lambda}$. 
Now, we aim to prove that $V(y_{0})=V_{0}$. Assume by contradiction that $V(y_{0})>V_{0}$.
In the light of (\ref{elena}), Fatou's Lemma, the invariance of  $\R^{3}$ by translations, Lemma \ref{DI} and Lemma \ref{AMlem1}, we obtain 
\begin{align*}
c_{V_{0}}=J_{V_{0}}(\tilde{v})&<\frac{a}{2}[\tilde{v}]^{2}+\frac{1}{2}\int_{\R^{3}} V(y_{0})\tilde{v}^{2} \, dx+\frac{b}{4}[\tilde{v}]^{4}-\frac{1}{2}\int_{\R^{3}} F(|\tilde{v}|^{2}) dx\\
&\leq \liminf_{n\rightarrow \infty}\Bigl[\frac{a}{2}[\tilde{v}_{n}]^{2}+\frac{1}{2}\int_{\R^{3}} V(\e_{n}x+y_{n}) |\tilde{v}_{n}|^{2} \, dx+\frac{b}{4}[\tilde{v}_{n}]^{4}-\frac{1}{2}\int_{\R^{3}} F(|\tilde{v}_{n}|^{2})\, dx  \Bigr] \\
&\leq \liminf_{n\rightarrow \infty}\Bigl[a\frac{t_{n}^{2}}{2}[|u_{n}|]^{2}+\frac{t_{n}^{2}}{2}\int_{\R^{3}} V(\e_{n}z) |u_{n}|^{2} \, dz
+ b\frac{t_{n}^{4}}{4}[|u_{n}|]^{4}-\frac{1}{2}\int_{\R^{3}} F(|t_{n} u_{n}|^{2}) \, dz  \Bigr] \\
&\leq \liminf_{n\rightarrow \infty} J_{\e_{n}}(t_{n} u_{n}) \leq \liminf_{n\rightarrow \infty} J_{\e_{n}}(u_{n})= c_{V_{0}}
\end{align*}
which is an absurd. Therefore, in view of $(V_2)$, we can conclude that $y_{0}\in M$.
\end{proof}

\noindent
Now, we consider the following subset of $\N_{\e}$ 
$$
\widetilde{\N}_{\e}=\left \{u\in \N_{\e}: J_{\e}(u)\leq c_{V_{0}}+h_{1}(\e)\right\},
$$
where $h_{1}:\R^{+}\rightarrow \R^{+}$ is such that $h_{1}(\e)\rightarrow 0$ as $\e \rightarrow 0$.
Fixed $y\in M$, we can use Lemma \ref{lem3.4} to see that $h_{1}(\e)=|J_{\e}(\Phi_{\e}(y))-c_{V_{0}}|\rightarrow 0$ as $\e \rightarrow 0$. Therefore $\Phi_{\e}(y)\in \widetilde{\N}_{\e}$, and $\widetilde{\N}_{\e}\neq \emptyset$ for any $\e>0$. Arguing as in Lemma $4.5$ in \cite{AD}, we have:
\begin{lem}\label{lem3.5}
For any $\delta>0$, there holds that
$$
\lim_{\e \rightarrow 0} \sup_{u\in \widetilde{\mathcal{N}}_{\e}} {\rm dist}(\beta_{\e}(u), M_{\delta})=0.
$$
\end{lem}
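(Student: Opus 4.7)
The plan is to argue by contradiction: suppose there exist $\delta_0 > 0$, a sequence $\e_n \to 0$, and $u_n \in \widetilde{\N}_{\e_n}$ with $\mathrm{dist}(\beta_{\e_n}(u_n), M_\delta) \geq \delta_0$ for all $n$. The strategy is to produce a concentration point $y_0 \in M \subset M_\delta$ for the $u_n$'s and then verify that $\beta_{\e_n}(u_n) \to y_0$, which is incompatible with the distance being bounded below.

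The first step is to verify the energy limit $J_{\e_n}(u_n) \to c_{V_0}$, which is what allows us to invoke Lemma \ref{prop3.3}. By the definition of $\widetilde{\N}_{\e_n}$ and $h_1(\e_n) \to 0$ we immediately get $\limsup_n J_{\e_n}(u_n) \leq c_{V_0}$. For the reverse inequality, I use the diamagnetic Lemma \ref{DI}: if $t_n > 0$ is chosen so that $t_n |u_n| \in \N_{V_0}$, then $J_{V_0}(t_n |u_n|) \leq \max_{t \geq 0} J_{\e_n}(t u_n) = J_{\e_n}(u_n)$ (using $V_0 \leq V_\e$ and $[|u_n|] \leq [u_n]_{A_\e}$), and since $J_{V_0}(t_n |u_n|) \geq c_{V_0}$, we obtain $\liminf_n J_{\e_n}(u_n) \geq c_{V_0}$.

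Next I apply Lemma \ref{prop3.3} to obtain $\tilde y_n \subset \R^3$ such that $v_n(x) := |u_n|(x + \tilde y_n) \to v$ strongly in $H^s_{V_0}$ with $v \not\equiv 0$, and, up to a subsequence, $y_n := \e_n \tilde y_n \to y_0 \in M$. By the continuous Sobolev embedding $H^s(\R^3, \R) \hookrightarrow L^4(\R^3, \R)$ (using $s > 3/4$, so $4 < 2^*_s$), we also have $v_n \to v$ in $L^4(\R^3, \R)$, hence $v_n^4 \to v^4$ in $L^1(\R^3, \R)$ with $\int_{\R^3} v^4\,dx > 0$.

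The final step is to pass to the limit in the barycenter. Performing the change of variable $x = z + \tilde y_n$ in the definition of $\beta_{\e_n}$ gives
\begin{equation*}
\beta_{\e_n}(u_n) = \frac{\displaystyle\int_{\R^3} \varUpsilon(\e_n z + y_n)\, v_n(z)^4 \, dz}{\displaystyle\int_{\R^3} v_n(z)^4 \, dz}.
\end{equation*}
Since $y_0 \in M \subset M_\delta \subset B_\rho$, the map $\varUpsilon$ is continuous at $y_0$ with $\varUpsilon(y_0) = y_0$; moreover $\e_n z + y_n \to y_0$ pointwise and $\|\varUpsilon\|_{L^\infty} \leq \rho$. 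Combining $v_n^4 \to v^4$ in $L^1$ with the dominated convergence theorem, the numerator tends to $y_0 \int_{\R^3} v^4\,dz$ and the denominator to $\int_{\R^3} v^4\,dz$, so $\beta_{\e_n}(u_n) \to y_0 \in M \subset M_\delta$, forcing $\mathrm{dist}(\beta_{\e_n}(u_n), M_\delta) \to 0$ and contradicting the initial assumption. The main obstacle is really the first step: securing $J_{\e_n}(u_n) \to c_{V_0}$ on the nose, because only then is Lemma \ref{prop3.3} available; once that is in hand, the remainder is a routine change-of-variable and dominated-convergence argument.
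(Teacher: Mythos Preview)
Your proof is correct and follows the standard route for this type of barycenter lemma, which is precisely the argument the paper has in mind when it refers to Lemma~4.5 in \cite{AD}. The structure---contradiction, verification that $J_{\e_n}(u_n)\to c_{V_0}$ via the diamagnetic inequality and the definition of $\widetilde{\N}_{\e_n}$, application of Lemma~\ref{prop3.3} to extract a concentration center $y_0\in M$, and passage to the limit in $\beta_{\e_n}$ after the change of variable $x=z+\tilde{y}_n$---is exactly the intended one. Two minor remarks: the inequality $J_{V_0}(t_n|u_n|)\le J_{\e_n}(t_n u_n)$ also uses $(g_2)$ (so that $G(\e_n x,\cdot)\le F(\cdot)$), which you implicitly invoke; and in the limiting step for the numerator it is cleanest to split $\int \varUpsilon(\e_n z+y_n)v_n^4 - y_0\int v^4$ into a term controlled by $\|v_n^4-v^4\|_{L^1}$ and a term handled by dominated convergence against the fixed majorant $2\rho\,v^4$, rather than appealing to pointwise convergence of $v_n$.
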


\noindent
We end this section proving a multiplicity result for \eqref{MPe}.
\begin{thm}\label{multiple}
For any $\delta>0$ such that $M_{\delta}\subset \Lambda$, there exists $\tilde{\e}_{\delta}>0$ such that, for any $\e\in (0, \e_{\delta})$, problem \eqref{MPe} has at least $cat_{M_{\delta}}(M)$ nontrivial solutions.
\end{thm}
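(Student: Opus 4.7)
The plan is to apply the abstract Ljusternik-Schnirelmann machinery for functionals on Nehari manifolds, in the spirit of Benci-Cerami, using the three maps $\Phi_{\e}$, $\beta_{\e}$ and $\varUpsilon$ already constructed together with the limit results of Lemmas \ref{lem3.4}, \ref{lem3.5N} and \ref{lem3.5}.

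First I would verify that the relevant maps are well-defined and continuous for $\e$ sufficiently small. Since $h_{1}(\e)\to 0$ and Lemma \ref{lem3.4} gives $J_{\e}(\Phi_{\e}(y))\to c_{V_{0}}$ uniformly in $y\in M$, we obtain $\Phi_{\e}(M)\subset \widetilde{\mathcal{N}}_{\e}$ for $\e$ small; the continuity of $\Phi_{\e}:M\to\widetilde{\mathcal{N}}_{\e}$ follows from the continuous dependence in $y$ of $\Psi_{\e,y}$ and of the projection $t_{\e}$, while continuity of $\beta_{\e}:\widetilde{\mathcal{N}}_{\e}\to\R^{3}$ is immediate from its integral definition and the bound $\|u\|_{\e}\geq r>0$ on $\mathcal{N}_{\e}$ given by \eqref{uNr}. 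By Lemma \ref{lem3.5}, for $\e$ small we also have $\beta_{\e}(\widetilde{\mathcal{N}}_{\e})\subset M_{\delta}$.

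The core step is to show the category inequality
$$
\mathrm{cat}_{\widetilde{\mathcal{N}}_{\e}}\bigl(\widetilde{\mathcal{N}}_{\e}\bigr)\geq \mathrm{cat}_{M_{\delta}}(M).
$$
To do this I construct an explicit homotopy between $\beta_{\e}\circ\Phi_{\e}:M\to M_{\delta}$ and the inclusion $\iota:M\hookrightarrow M_{\delta}$. By Lemma \ref{lem3.5N}, $\beta_{\e}(\Phi_{\e}(y))\to y$ uniformly on the compact set $M$, so for $\e$ small the segment $(1-t)y+t\,\beta_{\e}(\Phi_{\e}(y))$ lies in $B_{\rho}$ and therefore $H(t,y):=\varUpsilon\bigl((1-t)y+t\,\beta_{\e}(\Phi_{\e}(y))\bigr)$ defines a continuous map $H:[0,1]\times M\to M_{\delta}$ with $H(0,\cdot)=\iota$ and $H(1,\cdot)=\beta_{\e}\circ\Phi_{\e}$. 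The standard Benci-Cerami argument then gives the displayed category estimate.

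Finally, I would invoke the Ljusternik-Schnirelmann theorem for $C^{1}$-functionals on $C^{1}$-manifolds: by Proposition \ref{propPSc}, $J_{\e}$ restricted to $\mathcal{N}_{\e}$ satisfies the Palais-Smale condition at every level, in particular on the complete sublevel set $\widetilde{\mathcal{N}}_{\e}$; it is also bounded below on $\mathcal{N}_{\e}$ by $c_{\e}>0$. Consequently, $J_{\e}|_{\mathcal{N}_{\e}}$ admits at least $\mathrm{cat}_{\widetilde{\mathcal{N}}_{\e}}(\widetilde{\mathcal{N}}_{\e})\geq \mathrm{cat}_{M_{\delta}}(M)$ critical points in $\widetilde{\mathcal{N}}_{\e}$, which by Corollary \ref{cor} are critical points of the unconstrained functional $J_{\e}$, hence nontrivial solutions of \eqref{MPe}.

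The main obstacle is the construction and continuity of the homotopy $H$: one must ensure simultaneously that $\Phi_{\e}$ lands in $\widetilde{\mathcal{N}}_{\e}$, that $\beta_{\e}(\widetilde{\mathcal{N}}_{\e})\subset M_{\delta}$, and that the straight-line homotopy stays within $B_{\rho}$ so that composing with $\varUpsilon$ produces values in $M_{\delta}$. All three requirements hold only after choosing $\e_{\delta}$ small enough, the smallness being dictated respectively by Lemmas \ref{lem3.4}, \ref{lem3.5} and \ref{lem3.5N}; once this is arranged, the rest of the argument is routine.
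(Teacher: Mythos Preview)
Your proposal is correct and follows essentially the same route as the paper: the paper's own proof is a terse version of exactly this argument, citing Cingolani--Lazzo for the details of the homotopy and the category inequality, and then invoking Proposition~\ref{propPSc} and Corollary~\ref{cor} just as you do. One small slip worth fixing: in your homotopy step, saying the segment lies in $B_{\rho}$ and composing with $\varUpsilon$ does not by itself force $H(t,y)\in M_{\delta}$, since $\varUpsilon$ is the identity on $B_{\rho}$; what you actually need (and what Lemma~\ref{lem3.5N} gives) is that $|\beta_{\e}(\Phi_{\e}(y))-y|<\delta$ uniformly in $y\in M$, so the whole segment stays within distance $\delta$ of $y\in M$ and hence in $M_{\delta}$ directly.
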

\begin{proof}
Given  $\delta>0$ such that $M_{\delta}\subset \Lambda$, we can use Lemma \ref{lem3.5N}, Lemma \ref{lem3.4}, Lemma \ref{lem3.5} and argue as in \cite{CL} to deduce the existence of $\tilde{\e}_{\delta}>0$ such that, for any $\e\in (0, \e_{\delta})$, the following diagram
$$
M \stackrel{\Phi_{\e}}\rightarrow \widetilde{\mathcal{N}}_{\e} \stackrel{\beta_{\e}}\rightarrow M_{\delta}
$$
is well defined and $\beta_{\e}\circ \Phi_{\e}$ is homotopically equivalent to the embedding  $\iota: M\rightarrow M_{\delta}$. Hence, $cat_{\widetilde{\mathcal{N}}_{\e}}(\widetilde{\mathcal{N}}_{\e})\geq cat_{M_{\delta}}(M)$.
From Proposition \ref{propPSc} and standard Ljusternik-Schnirelmann theory, we can deduce that $J_{\e}$  possesses at least $cat_{\widetilde{\mathcal{N}}_{\e}}(\widetilde{\mathcal{N}}_{\e})$  critical points on $\mathcal{N}_{\e}$. In view of Corollary \ref{cor},  we obtain $cat_{M_{\delta}}(M)$  nontrivial solutions for  \eqref{MPe}.
\end{proof}

\section{Proof of Theorem \ref{thm1}}
This last section is devoted to the proof of the main result of this paper. In order to show that the solutions of \eqref{MPe} are indeed solutions to \eqref{Pe}, we need to verify that \eqref{ue} holds true.
For this purpose, we begin proving the following fundamental result in which we use a variant of Moser iteration scheme \cite{Moser} and a Kato's approximation argument \cite{Kato}. 
\begin{lem}\label{moser} 
Let $\e_{n}\rightarrow 0$ and $u_{n}\in \widetilde{\mathcal{N}}_{\e_{n}}$ be a solution to \eqref{MPe}. 
Then $v_{n}=|u_{n}|(\cdot+\tilde{y}_{n})$ satisfies $v_{n}\in L^{\infty}(\R^{3},\R)$ and there exists $C>0$ such that 
$$
\|v_{n}\|_{L^{\infty}(\R^{3})}\leq C \mbox{ for all } n\in \mathbb{N},
$$
where $\tilde{y}_{n}$ is given by Lemma \ref{prop3.3}.
Moreover
$$
\lim_{|x|\rightarrow \infty} v_{n}(x)=0 \mbox{ uniformly in } n\in \mathbb{N}.
$$
\end{lem}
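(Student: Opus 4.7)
The plan is to reduce the magnetic Kirchhoff equation for $u_n$ to a scalar fractional Kirchhoff inequality for $|u_n|$ via a Kato-type inequality for $(-\Delta)^s_{A_{\e_n}}$, and then run a fractional Moser iteration on the translated functions $v_n=|u_n|(\cdot+\tilde y_n)$. The uniform $L^\infty$ bound will follow from the uniform bound of $(v_n)$ in $H^s(\R^3,\R)$, which Lemma \ref{prop3.3} already provides; the uniform decay at infinity will follow from combining the Moser-type local estimate with the strong $L^2$-convergence $v_n\to v$ in $H^s_{V_0}$.

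The first step is the Kato-type inequality. Since a pointwise Kato inequality for $(-\Delta)^s_A$ is not available for general $s\in(3/4,1)$, I would use the approximation $u_{n,\delta}:=\sqrt{|u_n|^2+\delta^2}-\delta$ and test the magnetic equation against $\phi\,u_n/(|u_n|+\delta)$ for $\phi\in C^\infty_c(\R^3,[0,\infty))$. Using $|e^{i\theta}|=1$ and the elementary inequality
\[
\Re\Bigl[(u_n(x)-e^{i\Phi}u_n(y))\,\overline{u_n(x)/(|u_n(x)|+\delta)}\Bigr]\geq |u_n(x)|-|u_n(y)|+O(\delta),
\]
together with the symmetrisation of the double integral and the bound $[|u_n|]\le [u_n]_{A_{\e_n}}$ of Lemma \ref{DI}, one obtains, after letting $\delta\to 0^+$, the distributional inequality
\[
(a+b[u_n]^2_{A_{\e_n}})(-\Delta)^s|u_n|+V_{\e_n}(x)|u_n|\leq g(\e_n x,|u_n|^2)|u_n|\quad\text{in }\R^3.
\]
Translating this by $\tilde y_n$ and using that $M_n:=a+b[u_n]^2_{A_{\e_n}}$ is bounded away from zero and from above (by Lemma \ref{prop3.3} and Step~1 of Lemma \ref{PSc}), and that $v_n$ is bounded in $H^s(\R^3,\R)$ with $v_n\to v$ in $H^s_{V_0}$, I obtain a scalar inequality of the form
\[
a(-\Delta)^s v_n+V_0 v_n\leq h_n(x)\,v_n\quad\text{in }\R^3,
\]
where $h_n(x)=g(\e_n x+y_n,v_n^2)-V_0+V_{\e_n}(x+\tilde y_n)$ satisfies $|h_n|\leq C(v_n^{q-2}+1)$ uniformly in $n$ by $(g_1)$--$(g_2)$.

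Next I run a fractional Moser iteration on $v_n$. For $\beta\geq 1$ and $L>0$ set $w_{n,L}:=v_n\min(v_n,L)^{\beta-1}\in H^s(\R^3,\R)$ and test the scalar inequality with $v_n\min(v_n,L)^{2(\beta-1)}$. Using the standard algebraic bound
\[
(v_n(x)-v_n(y))\bigl(v_n\min(v_n,L)^{2(\beta-1)}(x)-v_n\min(v_n,L)^{2(\beta-1)}(y)\bigr)\geq \frac{1}{\beta}\bigl(w_{n,L}(x)-w_{n,L}(y)\bigr)^2,
\]
the fractional Sobolev embedding $H^s(\R^3,\R)\hookrightarrow L^{2^*_s}(\R^3,\R)$, and controlling the right-hand side via Hölder's inequality together with $v_n\in L^{2^*_s}(\R^3,\R)$ uniformly, I send $L\to\infty$ and iterate on the exponents $\beta_k=(2^*_s/2)^k$. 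The usual bootstrap yields $\|v_n\|_{L^\infty(\R^3)}\leq C(1+\|v_n\|_{L^{2^*_s}(\R^3)})^{\kappa}$ for some $\kappa>0$, hence $\|v_n\|_{L^\infty}\leq C$ uniformly in $n$.

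The main obstacle I expect is in fact the uniform decay at infinity, because Moser iteration gives only a global $L^\infty$ bound. To obtain the uniform vanishing $v_n(x)\to 0$ as $|x|\to\infty$, I would localise the iteration: for each $R\geq 1$ and $x_0$ with $|x_0|>2R$, test the scalar inequality with $\eta^2 v_n\min(v_n,L)^{2(\beta-1)}$ for a cut-off $\eta\in C^\infty_c(B_{2R}(x_0),[0,1])$ equal to $1$ on $B_R(x_0)$, paying a tail error of the form $C\int_{B_R^c(x_0)}v_n^2/|x-y|^{3+2s}\,dy$ (standard in the nonlocal Moser method). The result is a local estimate
\[
\|v_n\|_{L^\infty(B_1(x_0))}\leq C\bigl(\|v_n\|_{L^{2^*_s}(B_2(x_0))}+\text{tail}_n(x_0)\bigr),
\]
and both quantities tend to $0$ as $|x_0|\to\infty$ uniformly in $n$: the first because $v_n\to v$ strongly in $L^{2^*_s}(\R^3,\R)$ (interpolating the $H^s_{V_0}$ convergence of Lemma \ref{prop3.3} with the uniform $L^\infty$ bound) and the second because the tail is controlled by $\|v_n\|_{L^2(\R^3)}/|x_0|^{3+2s}$ up to a constant. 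Combining the two estimates yields $\lim_{|x|\to\infty}v_n(x)=0$ uniformly in $n$, which completes the proof.
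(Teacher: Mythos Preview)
Your overall strategy---derive a Kato-type inequality so that $|u_n|$ (and hence $v_n$) is a weak subsolution of a scalar fractional equation, then run a Moser iteration---is exactly the paper's approach, and your execution of the uniform $L^\infty$ bound is correct. The paper organises things slightly differently (it performs the Moser iteration directly on the magnetic equation using the test function $u_{L,n}^{2(\beta-1)}u_n$ and only afterwards proves the Kato inequality needed for the decay step), but this is cosmetic.

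The genuine difference is in the decay argument. The paper does \emph{not} localise the Moser iteration. Instead, once it knows that $v_n$ is a subsolution of $(-\Delta)^s v_n+\tfrac{V_0}{a+bM^2}v_n\le g_n$ with $g_n\to g$ in $L^r(\R^3)$ for all $r\in[2,\infty)$, it compares $v_n$ with the \emph{solution} $z_n$ of $(-\Delta)^s z_n+\tfrac{V_0}{a+bM^2}z_n=g_n$, written as the convolution $z_n=\mathcal K*g_n$ with the Bessel-type kernel $\mathcal K$. The decay of $\mathcal K$ together with the strong $L^r$-convergence of $g_n$ gives $z_n(x)\to 0$ as $|x|\to\infty$ uniformly in $n$, and the comparison $0\le v_n\le z_n$ finishes the proof.

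Your localised-Moser route can in principle be made to work, but the tail estimate you wrote is not right: the nonlocal tail $\mathrm{Tail}(v_n;x_0,R)\sim R^{2s}\int_{B_R^c(x_0)}v_n(y)\,|y-x_0|^{-3-2s}\,dy$ is \emph{not} bounded by $\|v_n\|_{L^2}/|x_0|^{3+2s}$---nothing in the integral sees $|x_0|$ that way. To push your argument through you would have to split $v_n=v+(v_n-v)$, use the strong $L^2$-convergence to make the $(v_n-v)$-tail uniformly small for large $n$, then show separately that the tail of the fixed limit $v$ vanishes as $|x_0|\to\infty$, and finally handle the finitely many small $n$ individually. This is doable but heavier than the paper's Bessel-kernel comparison, which bypasses tails entirely.
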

\begin{proof}
For any $L>0$ we define $u_{L,n}:=\min\{|u_{n}|, L\}\geq 0$ and we set $v_{L, n}=u_{L,n}^{2(\beta-1)}u_{n}$, where $\beta>1$ will be chosen later.
Taking $v_{L, n}$ as test function in (\ref{MPe}) we can see that
\begin{align}\label{conto1FF}
&(a+b[u_{n}]^{2}_{A_{\e_{n}}})\Re\left(\iint_{\R^{6}} \frac{(u_{n}(x)-u_{n}(y)e^{\imath A(\frac{x+y}{2})\cdot (x-y)})}{|x-y|^{3+2s}} \overline{(u_{n}u_{L,n}^{2(\beta-1)}(x)-u_{n}u_{L,n}^{2(\beta-1)}(y)e^{\imath A(\frac{x+y}{2})\cdot (x-y)})} \, dx dy\right)   \nonumber \\
&=\int_{\R^{3}} g(\e_{n} x, |u_{n}|^{2}) |u_{n}|^{2}u_{L,n}^{2(\beta-1)}  \,dx-\int_{\R^{3}} V(\e_{n} x) |u_{n}|^{2} u_{L,n}^{2(\beta-1)} \, dx.
\end{align}
Now, we observe that
\begin{align*}
&\Re\left[(u_{n}(x)-u_{n}(y)e^{\imath A(\frac{x+y}{2})\cdot (x-y)})\overline{(u_{n}u_{L,n}^{2(\beta-1)}(x)-u_{n}u_{L,n}^{2(\beta-1)}(y)e^{\imath A(\frac{x+y}{2})\cdot (x-y)})}\right] \\
&=\Re\Bigl[|u_{n}(x)|^{2}v_{L}^{2(\beta-1)}(x)-u_{n}(x)\overline{u_{n}(y)} u_{L,n}^{2(\beta-1)}(y)e^{-\imath A(\frac{x+y}{2})\cdot (x-y)}-u_{n}(y)\overline{u_{n}(x)} u_{L,n}^{2(\beta-1)}(x) e^{\imath A(\frac{x+y}{2})\cdot (x-y)} \\
&+|u_{n}(y)|^{2}u_{L,n}^{2(\beta-1)}(y) \Bigr] \\
&\geq (|u_{n}(x)|^{2}u_{L,n}^{2(\beta-1)}(x)-|u_{n}(x)||u_{n}(y)|u_{L,n}^{2(\beta-1)}(y)-|u_{n}(y)||u_{n}(x)|u_{L,n}^{2(\beta-1)}(x)+|u_{n}(y)|^{2}u^{2(\beta-1)}_{L,n}(y) \\
&=(|u_{n}(x)|-|u_{n}(y)|)(|u_{n}(x)|u_{L,n}^{2(\beta-1)}(x)-|u_{n}(y)|u_{L,n}^{2(\beta-1)}(y)).
\end{align*}
Thus
\begin{align}\label{realeF}
&\Re\left(\iint_{\R^{6}} \frac{(u_{n}(x)-u_{n}(y)e^{\imath A(\frac{x+y}{2})\cdot (x-y)})}{|x-y|^{3+2s}} \overline{(u_{n}u_{L,n}^{2(\beta-1)}(x)-u_{n}u_{L,n}^{2(\beta-1)}(y)e^{\imath A(\frac{x+y}{2})\cdot (x-y)})} \, dx dy\right) \nonumber\\
&\geq \iint_{\R^{6}} \frac{(|u_{n}(x)|-|u_{n}(y)|)}{|x-y|^{3+2s}} (|u_{n}(x)|u_{L,n}^{2(\beta-1)}(x)-|u_{n}(y)|u_{L,n}^{2(\beta-1)}(y))\, dx dy.
\end{align}
For all $t\geq 0$, we define
\begin{equation*}
\gamma(t)=\gamma_{L, \beta}(t)=t t_{L}^{2(\beta-1)},
\end{equation*}
where  $t_{L}=\min\{t, L\}$. 
Since $\gamma$ is an increasing function, we have
\begin{align*}
(p-q)(\gamma(p)- \gamma(q))\geq 0 \quad \mbox{ for any } p, q\in \R.
\end{align*}
Let us consider the functions 
\begin{equation*}
\Lambda(t)=\frac{|t|^{2}}{2} \quad \mbox{ and } \quad \Gamma(t)=\int_{0}^{t} (\gamma'(\tau))^{\frac{1}{2}} d\tau. 
\end{equation*}
and we note that
\begin{equation}\label{Gg}
\Lambda'(p-q)(\gamma(p)-\gamma(q))\geq |\Gamma(p)-\Gamma(q)|^{2} \quad \mbox{ for any } p, q\in\R. 
\end{equation}
Indeed, for any $p, q\in \R$ such that $p<q$, the Jensen inequality yields
\begin{align*}
\Lambda'(p-q)(\gamma(p)-\gamma(q))&=(p-q)\int_{q}^{p} \gamma'(t)dt\\
&=(p-q)\int_{q}^{p} (\Gamma'(t))^{2}dt \\
&\geq \left(\int_{q}^{p} \Gamma'(t) dt\right)^{2}\\
&=(\Gamma(p)-\Gamma(q))^{2}.
\end{align*}
In a similar way, we can prove that if $p\geq q$ then $\Lambda'(p-q)(\gamma(p)-\gamma(q))\geq (\Gamma(q)-\Gamma(p))^{2}$, that is \eqref{Gg} holds.
Hence, in view of \eqref{Gg}, we can deduce that
\begin{align}\label{Gg1}
|\Gamma(|u_{n}(x)|)- \Gamma(|u_{n}(y)|)|^{2} \leq (|u_{n}(x)|- |u_{n}(y)|)((|u_{n}|u_{L,n}^{2(\beta-1)})(x)- (|u_{n}|u_{L,n}^{2(\beta-1)})(y)). 
\end{align}
By \eqref{realeF} and \eqref{Gg1}, it follows that
\begin{align}\label{conto1FFF}
&\Re\left(\iint_{\R^{6}} \frac{(u_{n}(x)-u_{n}(y)e^{\imath A(\frac{x+y}{2})\cdot (x-y)})}{|x-y|^{3+2s}} \overline{(u_{n}u_{L,n}^{2(\beta-1)}(x)-u_{n}u_{L,n}^{2(\beta-1)}(y)e^{\imath A(\frac{x+y}{2})\cdot (x-y)})} \, dx dy\right) \nonumber \\
&\geq [\Gamma(|u_{n}|)]^{2}.
\end{align}
Observing that $\Gamma(|u_{n}|)\geq \frac{1}{\beta} |u_{n}| u_{L,n}^{\beta-1}$ and recalling the fractional Sobolev embedding $\mathcal{D}^{s,2}(\R^{3}, \R)\subset L^{\2}(\R^{3}, \R)$ (see \cite{DPV}), we get 
\begin{equation}\label{SS1}
[\Gamma(|u_{n}|)]^{2}\geq S_{*} \|\Gamma(|u_{n}|)\|^{2}_{L^{\2}(\R^{3})}\geq \left(\frac{1}{\beta}\right)^{2} S_{*}\||u_{n}| u_{L,n}^{\beta-1}\|^{2}_{L^{\2}(\R^{3})}.
\end{equation}
Putting together \eqref{conto1FF}, \eqref{conto1FFF}, \eqref{SS1} and noting that $a\leq a+b[u_{n}]_{A_{\e_{n}}}^{2}\leq a+bM^{2}$, we obtain that
\begin{align}\label{BMS}
a\left(\frac{1}{\beta}\right)^{2} S_{*}\||u_{n}| u_{L,n}^{\beta-1}\|^{2}_{L^{\2}(\R^{3})}+\int_{\R^{3}} V(\e_{n} x)|u_{n}|^{2}u_{L,n}^{2(\beta-1)} dx\leq \int_{\R^{3}} g(\e_{n}x, |u_{n}|^{2}) |u_{n}|^{2} u_{L,n}^{2(\beta-1)} dx.
\end{align}
Now, by $(g_1)$ and $(g_2)$, it follows that for any $\xi>0$ there exists $C_{\xi}>0$ such that
\begin{equation}\label{SS2}
g(\e_{n} x, t^{2})t^{2}\leq \xi |t|^{2}+C_{\xi}|t|^{\2} \mbox{ for all } t\in \R.
\end{equation}
Taking $\xi\in (0, V_{0})$ and using \eqref{BMS} and \eqref{SS2} we have
\begin{equation}\label{simo1}
\|w_{L,n}\|_{L^{\2}(\R^{3})}^{2}\leq C \beta^{2} \int_{\R^{3}} |u_{n}|^{\2}u_{L,n}^{2(\beta-1)},
\end{equation}
where we set $w_{L,n}:=|u_{n}| u_{L,n}^{\beta-1}$.

Take $\beta=\frac{\2}{2}$ and fix $R>0$. Recalling that $0\leq u_{L,n}\leq |u_{n}|$ and applying H\"older inequality, we get
\begin{align}\label{simo2}
\int_{\R^{3}} |u_{n}|^{\2}u_{L,n}^{2(\beta-1)}dx&=\int_{\R^{3}} |u_{n}|^{\2-2} |u_{n}|^{2} u_{L,n}^{\2-2}dx \nonumber\\
&=\int_{\R^{3}} |u_{n}|^{\2-2} (|u_{n}| u_{L,n}^{\frac{\2-2}{2}})^{2}dx \nonumber\\
&\leq \int_{\{|u_{n}|<R\}} R^{\2-2} |u_{n}|^{\2} dx+\int_{\{|u_{n}|>R\}} |u_{n}|^{\2-2} (|u_{n}| u_{L,n}^{\frac{\2-2}{2}})^{2}dx \nonumber\\
&\leq \int_{\{|u_{n}|<R\}} R^{\2-2} |u_{n}|^{\2} dx+\left(\int_{\{|u_{n}|>R\}} |u_{n}|^{\2} dx\right)^{\frac{\2-2}{\2}} \left(\int_{\R^{3}} (|u_{n}| u_{L,n}^{\frac{\2-2}{2}})^{\2}dx\right)^{\frac{2}{\2}}.
\end{align}
Since $(|u_{n}|)$ is bounded in $H^{s}(\R^{3}, \R)$, we can see that for any $R$ sufficiently large
\begin{equation}\label{simo3}
\left(\int_{\{|u_{n}|>R\}} |u_{n}|^{\2} dx\right)^{\frac{\2-2}{\2}}\leq  \frac{1}{2\beta^{2}}.
\end{equation}
In the light of \eqref{simo1}, \eqref{simo2} and \eqref{simo3}, we infer that
\begin{equation*}
\left(\int_{\R^{3}} (|u_{n}| u_{L,n}^{\frac{\2-2}{2}})^{\2} \right)^{\frac{2}{\2}}\leq C\beta^{2} \int_{\R^{3}} R^{\2-2} |u_{n}|^{\2} dx<\infty,
\end{equation*}
and taking the limit as $L\rightarrow \infty$ we deduce that $|u_{n}|\in L^{\frac{(\2)^{2}}{2}}(\R^{3},\R)$.

Using $0\leq u_{L,n}\leq |u_{n}|$ and passing to the limit as $L\rightarrow \infty$ in \eqref{simo1}, we have
\begin{equation*}
\|u_{n}\|_{L^{\beta\2}(\R^{3})}^{2\beta}\leq C \beta^{2} \int_{\R^{3}} |u_{n}|^{\2+2(\beta-1)},
\end{equation*}
from which we deduce that
\begin{equation*}
\left(\int_{\R^{3}} |u_{n}|^{\beta\2} dx\right)^{\frac{1}{(\beta-1)\2}}\leq C \beta^{\frac{1}{\beta-1}} \left(\int_{\R^{3}} |u_{n}|^{\2+2(\beta-1)}\right)^{\frac{1}{2(\beta-1)}}.
\end{equation*}
For $m\geq 1$ we define $\beta_{m+1}$ inductively so that $\2+2(\beta_{m+1}-1)=\2 \beta_{m}$ and $\beta_{1}=\frac{\2}{2}$. \\
Therefore
\begin{equation*}
\left(\int_{\R^{3}} |u_{n}|^{\beta_{m+1}\2} dx\right)^{\frac{1}{(\beta_{m+1}-1)\2}}\leq C \beta_{m+1}^{\frac{1}{\beta_{m+1}-1}} \left(\int_{\R^{3}} |u_{n}|^{\2\beta_{m}}\right)^{\frac{1}{\2(\beta_{m}-1)}}.
\end{equation*}
Let us define
$$
D_{m}=\left(\int_{\R^{3}} |u_{n}|^{\2\beta_{m}}\right)^{\frac{1}{\2(\beta_{m}-1)}}.
$$
Using an iteration argument, we can find $C_{0}>0$ independent of $m$ such that 
$$
D_{m+1}\leq \prod_{k=1}^{m} C \beta_{k+1}^{\frac{1}{\beta_{k+1}-1}}  D_{1}\leq C_{0} D_{1}.
$$
Taking the limit as $m\rightarrow \infty$ we get 
\begin{equation}\label{UBu}
\|u_{n}\|_{L^{\infty}(\R^{3})}\leq C_{0}D_{1}=:K \mbox{ for all } n\in \mathbb{N}.
\end{equation}
Moreover, by interpolation, we can deduce that $(|u_{n}|)$ strongly converges in $L^{r}(\R^{3}, \R)$ for all $r\in (2, \infty)$. From the growth assumptions on $g$, we can see that $g(\e x, |u_{n}|^{2})|u_{n}|$ strongly converges in $L^{r}(\R^{3}, \R)$ for all $r\in [2, \infty)$.

In what follows, we prove that $|u_{n}|$ is a weak subsolution to 
\begin{equation}\label{Kato0}
\left\{
\begin{array}{ll}
(a+b[v]^{2})(-\Delta)^{s}v+V_{0} v=g(\e_{n} x, v^{2})v &\mbox{ in } \R^{3} \\
v\geq 0 \quad \mbox{ in } \R^{3}.
\end{array}
\right.
\end{equation}
Roughly speaking, we will prove a Kato's inequality for the modulus of solutions of \eqref{MPe}.

Fix $\varphi\in C^{\infty}_{c}(\R^{3}, \R)$ such that $\varphi\geq 0$, and we take $\psi_{\delta, n}=\frac{u_{n}}{u_{\delta, n}}\varphi$ as test function in \eqref{Pe}, where $u_{\delta,n}=\sqrt{|u_{n}|^{2}+\delta^{2}}$ for $\delta>0$. Note that $\psi_{\delta, n}\in H^{s}_{\e_{n}}$ for all $\delta>0$ and $n\in \mathbb{N}$. Indeed $\int_{\R^{3}} V(\e_{n} x) |\psi_{\delta,n}|^{2} dx\leq \int_{\supp(\varphi)} V(\e_{n} x)\varphi^{2} dx<\infty$. 
Now, we can see that 
\begin{align*}
\psi_{\delta,n}(x)-\psi_{\delta,n}(y)e^{\imath A_{\e}(\frac{x+y}{2})\cdot (x-y)}&=\left(\frac{u_{n}(x)}{u_{\delta,n}(x)}\right)\varphi(x)-\left(\frac{u_{n}(y)}{u_{\delta,n}(y)}\right)\varphi(y)e^{\imath A_{\e}(\frac{x+y}{2})\cdot (x-y)}\\
&=\left[\left(\frac{u_{n}(x)}{u_{\delta,n}(x)}\right)-\left(\frac{u_{n}(y)}{u_{\delta,n}(x)}\right)e^{\imath A_{\e}(\frac{x+y}{2})\cdot (x-y)}\right]\varphi(x) \\
&+\left[\varphi(x)-\varphi(y)\right] \left(\frac{u_{n}(y)}{u_{\delta,n}(x)}\right) e^{\imath A_{\e}(\frac{x+y}{2})\cdot (x-y)} \\
&+\left(\frac{u_{n}(y)}{u_{\delta,n}(x)}-\frac{u_{n}(y)}{u_{\delta,n}(y)}\right)\varphi(y) e^{\imath A_{\e}(\frac{x+y}{2})\cdot (x-y)},
\end{align*}
and using $|z+w+k|^{2}\leq 4(|z|^{2}+|w|^{2}+|k|^{2})$ for all $z,w,k\in \C$, $|e^{\imath t}|=1$ for all $t\in \R$, $u_{\delta,n}\geq \delta$, $|\frac{u_{n}}{u_{\delta,n}}|\leq 1$, \eqref{UBu} and $|\sqrt{|z|^{2}+\delta^{2}}-\sqrt{|w|^{2}+\delta^{2}}|\leq ||z|-|w||$ for all $z, w\in \C$, we can deduce that
\begin{align*}
&|\psi_{\delta,n}(x)-\psi_{\delta,n}(y)e^{\imath A_{\e}(\frac{x+y}{2})\cdot (x-y)}|^{2} \\
&\leq \frac{4}{\delta^{2}}|u_{n}(x)-u_{n}(y)e^{\imath A_{\e}(\frac{x+y}{2})\cdot (x-y)}|^{2}\|\varphi\|^{2}_{L^{\infty}(\R^{3})} +\frac{4}{\delta^{2}}|\varphi(x)-\varphi(y)|^{2} \||u_{n}|\|^{2}_{L^{\infty}(\R^{3})} \\
&+\frac{4}{\delta^{4}} \||u_{n}|\|^{2}_{L^{\infty}(\R^{3})} \|\varphi\|^{2}_{L^{\infty}(\R^{3})} |u_{\delta,n}(y)-u_{\delta,n}(x)|^{2} \\
&\leq \frac{4}{\delta^{2}}|u_{n}(x)-u_{n}(y)e^{\imath A_{\e}(\frac{x+y}{2})\cdot (x-y)}|^{2}\|\varphi\|^{2}_{L^{\infty}(\R^{3})} +\frac{4K^{2}}{\delta^{2}}|\varphi(x)-\varphi(y)|^{2} \\
&+\frac{4K^{2}}{\delta^{4}} \|\varphi\|^{2}_{L^{\infty}(\R^{3})} ||u_{n}(y)|-|u_{n}(x)||^{2}. 
\end{align*}
In view of $u_{n}\in H^{s}_{\e_{n}}$, $|u_{n}|\in H^{s}(\R^{3}, \R)$ (by Lemma \ref{DI}) and $\varphi\in C^{\infty}_{c}(\R^{3}, \R)$, we get $\psi_{\delta,n}\in H^{s}_{\e_{n}}$.

Therefore
\begin{align}\label{Kato1}
&(a+b[u_{n}]^{2}_{A_{\e_{n}}})\Re\left[\iint_{\R^{6}} \frac{(u_{n}(x)-u_{n}(y)e^{\imath A_{\e}(\frac{x+y}{2})\cdot (x-y)})}{|x-y|^{3+2s}} \left(\frac{\overline{u_{n}(x)}}{u_{\delta,n}(x)}\varphi(x)-\frac{\overline{u_{n}(y)}}{u_{\delta,n}(y)}\varphi(y)e^{-\imath A_{\e}(\frac{x+y}{2})\cdot (x-y)}  \right) dx dy\right] \nonumber\\
&+\int_{\R^{3}} V(\e x)\frac{|u_{n}|^{2}}{u_{\delta,n}}\varphi dx=\int_{\R^{3}} g(\e x, |u_{n}|^{2})\frac{|u_{n}|^{2}}{u_{\delta,n}}\varphi dx.
\end{align}
From $\Re(z)\leq |z|$ for all $z\in \C$ and  $|e^{\imath t}|=1$ for all $t\in \R$, it follows that
\begin{align}\label{alves1}
&\Re\left[(u_{n}(x)-u_{n}(y)e^{\imath A_{\e}(\frac{x+y}{2})\cdot (x-y)}) \left(\frac{\overline{u_{n}(x)}}{u_{\delta,n}(x)}\varphi(x)-\frac{\overline{u_{n}(y)}}{u_{\delta,n}(y)}\varphi(y)e^{-\imath A_{\e}(\frac{x+y}{2})\cdot (x-y)}  \right)\right] \nonumber\\
&=\Re\left[\frac{|u_{n}(x)|^{2}}{u_{\delta,n}(x)}\varphi(x)+\frac{|u_{n}(y)|^{2}}{u_{\delta,n}(y)}\varphi(y)-\frac{u_{n}(x)\overline{u_{n}(y)}}{u_{\delta,n}(y)}\varphi(y)e^{-\imath A_{\e}(\frac{x+y}{2})\cdot (x-y)} -\frac{u_{n}(y)\overline{u_{n}(x)}}{u_{\delta,n}(x)}\varphi(x)e^{\imath A_{\e}(\frac{x+y}{2})\cdot (x-y)}\right] \nonumber \\
&\geq \left[\frac{|u_{n}(x)|^{2}}{u_{\delta,n}(x)}\varphi(x)+\frac{|u_{n}(y)|^{2}}{u_{\delta,n}(y)}\varphi(y)-|u_{n}(x)|\frac{|u_{n}(y)|}{u_{\delta,n}(y)}\varphi(y)-|u_{n}(y)|\frac{|u_{n}(x)|}{u_{\delta,n}(x)}\varphi(x) \right].
\end{align}
Now, we can note that
\begin{align}\label{alves2}
&\frac{|u_{n}(x)|^{2}}{u_{\delta,n}(x)}\varphi(x)+\frac{|u_{n}(y)|^{2}}{u_{\delta,n}(y)}\varphi(y)-|u_{n}(x)|\frac{|u_{n}(y)|}{u_{\delta,n}(y)}\varphi(y)-|u_{n}(y)|\frac{|u_{n}(x)|}{u_{\delta,n}(x)}\varphi(x) \nonumber\\
&=  \frac{|u_{n}(x)|}{u_{\delta,n}(x)}(|u_{n}(x)|-|u_{n}(y)|)\varphi(x)-\frac{|u_{n}(y)|}{u_{\delta,n}(y)}(|u_{n}(x)|-|u_{n}(y)|)\varphi(y) \nonumber\\
&=\left[\frac{|u_{n}(x)|}{u_{\delta,n}(x)}(|u_{n}(x)|-|u_{n}(y)|)\varphi(x)-\frac{|u_{n}(x)|}{u_{\delta,n}(x)}(|u_{n}(x)|-|u_{n}(y)|)\varphi(y)\right] \nonumber\\
&+\left(\frac{|u_{n}(x)|}{u_{\delta,n}(x)}-\frac{|u_{n}(y)|}{u_{\delta,n}(y)} \right) (|u_{n}(x)|-|u_{n}(y)|)\varphi(y) \nonumber\\
&=\frac{|u_{n}(x)|}{u_{\delta,n}(x)}(|u_{n}(x)|-|u_{n}(y)|)(\varphi(x)-\varphi(y)) +\left(\frac{|u_{n}(x)|}{u_{\delta,n}(x)}-\frac{|u_{n}(y)|}{u_{\delta,n}(y)} \right) (|u_{n}(x)|-|u_{n}(y)|)\varphi(y) \nonumber\\
&\geq \frac{|u_{n}(x)|}{u_{\delta,n}(x)}(|u_{n}(x)|-|u_{n}(y)|)(\varphi(x)-\varphi(y)) 
\end{align}
where in the last inequality we used that
$$
\left(\frac{|u_{n}(x)|}{u_{\delta,n}(x)}-\frac{|u_{n}(y)|}{u_{\delta,n}(y)} \right) (|u_{n}(x)|-|u_{n}(y)|)\varphi(y)\geq 0
$$
due to
$$
h(t)=\frac{t}{\sqrt{t^{2}+\delta^{2}}} \mbox{ is increasing for } t\geq 0 \quad \mbox{ and } \quad \varphi\geq 0 \mbox{ in }\R^{3}.
$$
Observing that
$$
\frac{|\frac{|u_{n}(x)|}{u_{\delta,n}(x)}(|u_{n}(x)|-|u_{n}(y)|)(\varphi(x)-\varphi(y))|}{|x-y|^{3+2s}}\leq \frac{||u_{n}(x)|-|u_{n}(y)||}{|x-y|^{\frac{3+2s}{2}}} \frac{|\varphi(x)-\varphi(y)|}{|x-y|^{\frac{3+2s}{2}}}\in L^{1}(\R^{6}),
$$
and $\frac{|u_{n}(x)|}{u_{\delta,n}(x)}\rightarrow 1$ a.e. in $\R^{3}$ as $\delta\rightarrow 0$,
we can apply \eqref{alves1}, \eqref{alves2} and the Dominated Convergence Theorem to infer that
\begin{align}\label{Kato2}
&\limsup_{\delta\rightarrow 0} \Re\left[\iint_{\R^{6}} \frac{(u_{n}(x)-u_{n}(y)e^{\imath A_{\e}(\frac{x+y}{2})\cdot (x-y)})}{|x-y|^{3+2s}} \left(\frac{\overline{u_{n}(x)}}{u_{\delta,n}(x)}\varphi(x)-\frac{\overline{u_{n}(y)}}{u_{\delta,n}(y)}\varphi(y)e^{-\imath A_{\e}(\frac{x+y}{2})\cdot (x-y)}  \right) dx dy\right] \nonumber\\
&\geq \limsup_{\delta\rightarrow 0} \iint_{\R^{6}} \frac{|u_{n}(x)|}{u_{\delta,n}(x)}(|u_{n}(x)|-|u_{n}(y)|)(\varphi(x)-\varphi(y)) \frac{dx dy}{|x-y|^{3+2s}} \nonumber\\
&=\iint_{\R^{6}} \frac{(|u_{n}(x)|-|u_{n}(y)|)(\varphi(x)-\varphi(y))}{|x-y|^{3+2s}} dx dy.
\end{align}
On the other hand, from the Dominated Convergence Theorem again (we recall that $\frac{|u_{n}|^{2}}{u_{\delta, n}}\leq |u_{n}|$ and $\varphi\in C^{\infty}_{c}(\R^{3}, \R)$) we can see that
\begin{equation}\label{Kato3}
\lim_{\delta\rightarrow 0} \int_{\R^{3}} V(\e_{n} x)\frac{|u_{n}|^{2}}{u_{\delta,n}}\varphi dx=\int_{\R^{3}} V(\e_{n} x)|u_{n}|\varphi dx\geq \int_{\R^{3}} V_{0}|u_{n}|\varphi dx
\end{equation}
and
\begin{equation}\label{Kato4}
\lim_{\delta\rightarrow 0}  \int_{\R^{3}} g(\e_{n} x, |u_{n}|^{2})\frac{|u_{n}|^{2}}{u_{\delta,n}}\varphi dx=\int_{\R^{3}} g(\e_{n} x, |u_{n}|^{2}) |u_{n}|\varphi dx.
\end{equation}
By Lemma \ref{DI} we can also see that
\begin{equation}\label{Kff}
\limsup_{n\rightarrow \infty} (a+b[u_{n}]^{2}_{A_{\e_{n}}})\geq (a+b[|u_{n}|]^{2}).
\end{equation}
Putting together \eqref{Kato1}, \eqref{Kato2}, \eqref{Kato3}, \eqref{Kato4} and \eqref{Kff} we can deduce that
\begin{align*}
(a+b[|u_{n}|]^{2})\iint_{\R^{6}} \frac{(|u_{n}(x)|-|u_{n}(y)|)(\varphi(x)-\varphi(y))}{|x-y|^{3+2s}} dx dy+\int_{\R^{3}} V_{0}|u_{n}|\varphi dx\leq 
\int_{\R^{3}} g(\e_{n} x, |u_{n}|^{2}) |u_{n}|\varphi dx
\end{align*}
for any $\varphi\in C^{\infty}_{c}(\R^{3}, \R)$ such that $\varphi\geq 0$, that is $|u_{n}|$ is a weak subsolution to \eqref{Kato0}.

Now, we set $v_{n}=|u_{n}|(\cdot+\tilde{y}_{n})$. Then Lemma \ref{DI} yields
$$
a+b[v_{n}]^{2}=a+b[|u_{n}|]^{2}\leq a+b[u_{n}]_{A_{\e_{n}}}^{2}\leq a+bM^{2}.
$$
We also note that $v_{n}$ satisfies 
\begin{equation}\label{Pkat}
(-\Delta)^{s} v_{n} + \frac{V_{0}}{a+bM^{2}} v_{n}\leq g_{n} \mbox{ in } \R^{3},
\end{equation}
where
$$
g_{n}:=(a+b[v_{n}]^{2})^{-1}[g(\e_{n} x+\e_{n}\tilde{y}_{n}, v_{n}^{2})v_{n}-V_{0}v_{n}]+\frac{V_{0}}{a+bM^{2}}v_{n}.
$$
Let $z_{n}\in H^{s}(\R^{3}, \R)$ be the unique solution to
\begin{equation}\label{US}
(-\Delta)^{s} z_{n} + \frac{V_{0}}{a+bM^{2}} z_{n}=g_{n} \mbox{ in } \R^{3}.
\end{equation}
In the light of \eqref{UBu}, we know that $\|v_{n}\|_{L^{\infty}(\R^{3})}\leq C$ for all $n\in \mathbb{N}$, and by interpolation $v_{n}\rightarrow v$ strongly converges in $L^{r}(\R^{3}, \R)$ for all $r\in (2, \infty)$, for some $v\in L^{r}(\R^{3}, \R)$.
From the growth assumptions on $g$, we can see that 
$$
g_{n}\rightarrow  (a+b[v]^{2})^{-1}[f(v^{2})v-V_{0}v]+\frac{V_{0}}{a+bM^{2}}v \mbox{ in } L^{r}(\R^{3}, \R) \quad \forall r\in [2, \infty),
$$ 
and there exists $C>0$ such that $\|g_{n}\|_{L^{\infty}(\R^{3})}\leq C$ for all $n\in \mathbb{N}$.
Then $z_{n}=\mathcal{K}*g_{n}$ (see \cite{FQT}), where $\mathcal{K}$ is the Bessel kernel, and arguing as in \cite{AM}, we can prove that $|z_{n}(x)|\rightarrow 0$ as $|x|\rightarrow \infty$ uniformly with respect to $n\in \mathbb{N}$.
Taking into account $v_{n}$ satisfies \eqref{Pkat} and $z_{n}$ solves \eqref{US}, we can use a comparison argument to see that $0\leq v_{n}\leq z_{n}$ a.e. in $\R^{3}$ and for all $n\in \mathbb{N}$. In conclusion, $v_{n}(x)\rightarrow 0$ as $|x|\rightarrow \infty$ uniformly with respect to $n\in \mathbb{N}$.
\end{proof}

\noindent
Now, we are ready to give the proof of Theorem \ref{thm1}.
\begin{proof}[Proof of Theorem \ref{thm1}]
Let $\delta>0$ be such that $M_{\delta}\subset \Lambda$, and we show that there exists  $\hat{\e}_{\delta}>0$ such that for any $\e\in (0, \hat{\e}_{\delta})$ and any solution $u\in \widetilde{\mathcal{N}}_{\e}$ of \eqref{MPe}, it holds
\begin{equation}\label{Ua}
\|u\|_{L^{\infty}(\R^{3}\setminus \Lambda_{\e})}<t_{a'}.
\end{equation}
We argue by contradiction, and assume that there is a sequence $\e_{n}\rightarrow 0$, $u_{n}\in \widetilde{\mathcal{N}}_{\e_{n}}$ such that
\begin{equation}\label{AbsAFF}
\|u_{n}\|_{L^{\infty}(\R^{3}\setminus \Lambda_{\e})}\geq t_{a'}.
\end{equation}
Since $J_{\e_{n}}(u_{n})\leq c_{V_{0}}+h_{1}(\e_{n})$, we can argue as in the first part of Lemma \ref{prop3.3} to deduce that $J_{\e_{n}}(u_{n})\rightarrow c_{V_{0}}$.
In view of Lemma \ref{prop3.3}, there exists $(\tilde{y}_{n})\subset \R^{3}$ such that $\e_{n}\tilde{y}_{n}\rightarrow y_{0}$ for some $y_{0} \in M$. 
Take $r>0$ such that, for some subsequence still denoted by itself, it holds $B_{r}(\tilde{y}_{n})\subset \Lambda$ for all $n\in \mathbb{N}$.
Hence $B_{\frac{r}{\e_{n}}}(\tilde{y}_{n})\subset \Lambda_{\e_{n}}$ $n\in \mathbb{N}$. Consequently, 
$$
\R^{3}\setminus \Lambda_{\e_{n}}\subset \R^{3} \setminus B_{\frac{r}{\e_{n}}}(\tilde{y}_{n}) \mbox{ for any } n\in \mathbb{N}.
$$ 
By Lemma \ref{moser}, we can find $R>0$ such that 
$$
v_{n}(x)<t_{a'} \mbox{ for } |x|\geq R, n\in \mathbb{N},
$$ 
where $v_{n}(x)=|u_{\e_{n}}|(x+ \tilde{y}_{n})$ ($v_{n}$ is also strongly convergent in $H^{s}(\R^{3}, \R)$), 
from which we deduce that $|u_{\e_{n}}(x)|<a$ for any $x\in \R^{3}\setminus B_{R}(\tilde{y}_{n})$ and $n\in \mathbb{N}$. Then, there exists $\nu \in \mathbb{N}$ such that for any $n\geq \nu$ and $r/\e_{n}>R$ it holds 
$$
\R^{3}\setminus \Lambda_{\e_{n}}\subset \R^{3} \setminus B_{\frac{r}{\e_{n}}}(\tilde{y}_{n})\subset \R^{3}\setminus B_{R}(\tilde{y}_{n}).
$$ 
Therefore, $|u_{\e_{n}}(x)|<a$ for any $x\in \R^{3}\setminus \Lambda_{\e_{n}}$ and $n\geq \nu$, and this is impossible by \eqref{AbsAFF}.

Let $\tilde{\e}_{\delta}>0$ be given by Theorem \ref{multiple} and we set $\e_{\delta}=\min\{\tilde{\e}_{\delta}, \hat{\e}_{\delta} \}$. Applying Theorem \ref{multiple} we obtain $cat_{M_{\delta}}(M)$ nontrivial solutions to \eqref{MPe}.
If $u\in \h$ is one of these solutions, then $u\in \widetilde{\mathcal{N}}_{\e}$, and in view of \eqref{Ua} and the definition of $g$ we can infer that $u$ is also a solution to \eqref{MPe}. Since $\hat{u}_{\e}(x)=u_{\e}(x/\e)$ is a solution to (\ref{P}), we can infer that \eqref{P} has at least $cat_{M_{\delta}}(M)$ nontrivial solutions.

Finally, we investigate the behavior of the maximum points of  $|\hat{u}_{\e_{n}}|$. Take $\e_{n}\rightarrow 0$ and $(u_{\e_{n}})$ a sequence of solutions to \eqref{MPe} as above. From $(g_1)$, we can find $\gamma>0$ such that
\begin{align}\label{4.18HZ}
g(\e x, t^{2})t^{2}\leq \frac{V_{0}}{2}t^{2}, \mbox{ for all } x\in \R^{3}, |t|\leq \gamma.
\end{align}
Arguing as above, we can find $R>0$ such that
\begin{align}\label{4.19HZ}
\|u_{\e_{n}}\|_{L^{\infty}(B^{c}_{R}(\tilde{y}_{n}))}<\gamma.
\end{align}
Up to a subsequence, we may also assume that
\begin{align}\label{4.20HZ}
\|u_{\e_{n}}\|_{L^{\infty}(B_{R}(\tilde{y}_{n}))}\geq \gamma.
\end{align}
Indeed, if \eqref{4.20HZ} does not hold, we get $\|u_{\e_{n}}\|_{L^{\infty}(\R^{3})}< \gamma$, and using $J_{\e_{n}}'(u_{\e_{n}})=0$, \eqref{4.18HZ} and Lemma \ref{DI} we can deduce that 
\begin{align*}
a[|u_{\e_{n}}|]^{2}+\int_{\R^{3}}V_{0}|u_{\e_{n}}|^{2}dx&\leq \|u_{\e_{n}}\|^{2}_{\e_{n}}+b[u_{\e_{n}}]^{4}_{A_{\e_{n}}}\\
&=\int_{\R^{3}} g_{\e_{n}}(x, |u_{\e_{n}}|^{2})|u_{\e_{n}}|^{2}\,dx\\
&\leq \frac{V_{0}}{2}\int_{\R^{3}}|u_{\e_{n}}|^{2}\, dx.
\end{align*}
This fact yields $\|u_{\e_{n}}\|_{H^{s}(\R^{3})}=0$, which is impossible. Hence, \eqref{4.20HZ} is verified.

In the light of \eqref{4.19HZ} and \eqref{4.20HZ}, we can see that the maximum points $p_{n}$ of $|u_{\e_{n}}|$ belong to $B_{R}(\tilde{y}_{n})$, that is $p_{n}=\tilde{y}_{n}+q_{n}$ for some $q_{n}\in B_{R}$. Since the associated solution of \eqref{P} is of the form $\hat{u}_{n}(x)=u_{\e_{n}}(x/\e_{n})$, we can infer that a maximum point $\eta_{\e_{n}}$ of $|\hat{u}_{n}|$ is $\eta_{\e_{n}}=\e_{n}\tilde{y}_{n}+\e_{n}q_{n}$. Since $q_{n}\in B_{R}$, $\e_{n}\tilde{y}_{n}\rightarrow y_{0}$ and $V(y_{0})=V_{0}$, we can use the continuity of $V$ to deduce that
$$
\lim_{n\rightarrow \infty} V(\eta_{\e_{n}})=V_{0}.
$$
Finally, we provide a decay estimate for $|\hat{u}_{n}|$.
Using Lemma $4.3$ in \cite{FQT}, there exists a function $w$ such that 
\begin{align}\label{HZ1}
0<w(x)\leq \frac{C}{1+|x|^{3+2s}},
\end{align}
and
\begin{align}\label{HZ2}
(-\Delta)^{s} w+\frac{V_{0}}{2(a+bM^{2})}w\geq 0 \mbox{ in } \R^{3}\setminus B_{R_{1}} 
\end{align}
for some suitable $R_{1}>0$, and $M>0$ is such that $a+bM^{2}\geq a+b[u_{n}]^{2}_{A_{\e_{n}}}\geq a+b[v_{n}]^{2}$ (the last inequality is due to Lemma \ref{DI}). By Lemma \ref{moser}, we know that $v_{n}(x)\rightarrow 0$ as $|x|\rightarrow \infty$ uniformly in $n\in \mathbb{N}$, so we can use $(g_1)$ to deduce that there exists $R_{2}>0$ such that
\begin{equation}\label{hzero}
g(\e_{n}x+\e_{n}\tilde{y}_{n}, v_{n}^{2})v_{n}\leq \frac{V_{0}}{2}v_{n}  \mbox{ in } B_{R_{2}}^{c}.
\end{equation}
Arguing as in Lemma \ref{moser}, we can note that $v_{n}$ verifies
\begin{equation}\label{Pkat}
(-\Delta)^{s} v_{n} + \frac{V_{0}}{a+bM^{2}} v_{n}\leq g_{n} \mbox{ in } \R^{3},
\end{equation}
where
$$
g_{n}:=(a+b[v_{n}]^{2})^{-1}[g(\e_{n} x+\e_{n}\tilde{y}_{n}, v_{n}^{2})v_{n}-V(\e_{n} x+\e_{n}\tilde{y}_{n})v_{n}]+\frac{V_{0}}{a+bM^{2}}v_{n}.
$$
Let us denote by $w_{n}$ the unique solution to 
$$
(-\Delta)^{s}w_{n}+\frac{V_{0}}{(a+bM^{2})}w_{n}=g_{n} \mbox{ in } \R^{3}.
$$
By comparison, we have $0\leq v_{n}\leq w_{n}$ in $\R^{3}$ and together with \eqref{hzero} we get
\begin{align*}
(-\Delta)^{s}w_{n}&+\frac{V_{0}}{2(a+bM^{2})}w_{n} \\
&=(-\Delta)^{s}w_{n}+\frac{V_{0}}{(a+bM^{2})}w_{n}-\frac{V_{0}}{2(a+bM^{2})}w_{n} \\
&\leq g_{n}-\frac{V_{0}}{2(a+bM^{2})}v_{n} \\
&\leq (a+b[v_{n}]^{2})^{-1}[g(\e_{n} x+\e_{n}\tilde{y}_{n}, v_{n}^{2})v_{n}-V(\e_{n} x+\e_{n}\tilde{y}_{n})v_{n}]+\frac{V_{0}}{2(a+bM^{2})}v_{n} \\
&\leq (a+b[v_{n}]^{2})^{-1}\left\{g(\e_{n} x+\e_{n}\tilde{y}_{n}, v_{n}^{2})v_{n}-\left(V(\e_{n} x+\e_{n}\tilde{y}_{n})-\frac{V_{0}}{2}\right)v_{n}\right\} \\
&\leq (a+b[v_{n}]^{2})^{-1}\left\{g(\e_{n} x+\e_{n}\tilde{y}_{n}, v_{n}^{2})v_{n}-\frac{V_{0}}{2} v_{n}\right\} \leq 0 \mbox{ in } B_{R_{2}}^{c}.
\end{align*}
Choose $R_{3}=\max\{R_{1}, R_{2}\}$ and we set 
\begin{align}\label{HZ4}
c=\inf_{B_{R_{3}}} w>0 \mbox{ and } \tilde{w}_{n}=(d+1)w-c w_{n}.
\end{align}
where $d=\sup_{n\in \mathbb{N}} \|w_{n}\|_{L^{\infty}(\R^{3})}<\infty$. 
Our purpose is to verify that
\begin{equation}\label{HZ5}
\tilde{w}_{n}\geq 0 \mbox{ in } \R^{3}.
\end{equation}
Let us note that
\begin{align}
&\lim_{|x|\rightarrow \infty} \sup_{n\in \mathbb{N}}\tilde{w}_{n}(x)=0,  \label{HZ0N} \\
&\tilde{w}_{n}\geq dc+w-dc>0 \mbox{ in } B_{R_{3}} \label{HZ0},\\
&(-\Delta)^{s} \tilde{w}_{n}+\frac{V_{0}}{2(a+bM^{2})}\tilde{w}_{n}\geq 0 \mbox{ in } \R^{3}\setminus B_{R_{3}} \label{HZ00}.
\end{align}
Now, we argue by contradiction. Suppose that there exists a sequence $(\bar{x}_{j, n})\subset \R^{3}$ such that 
\begin{align}\label{HZ6}
\inf_{x\in \R^{3}} \tilde{w}_{n}(x)=\lim_{j\rightarrow \infty} \tilde{w}_{n}(\bar{x}_{j, n})<0. 
\end{align}
From (\ref{HZ0N}), it follows that $(\bar{x}_{j, n})$ is bounded, and, up to subsequence, we may assume that there exists $\bar{x}_{n}\in \R^{3}$ such that $\bar{x}_{j, n}\rightarrow \bar{x}_{n}$ as $j\rightarrow \infty$. 
Thus, (\ref{HZ6}) gives
\begin{align}\label{HZ7}
\inf_{x\in \R^{3}} \tilde{w}_{n}(x)= \tilde{w}_{n}(\bar{x}_{n})<0.
\end{align}
Using the minimality of $\bar{x}_{n}$ and the representation formula for the fractional Laplacian (see Lemma $3.2$ in \cite{DPV}), we can deduce that 
\begin{align}\label{HZ8}
(-\Delta)^{s}\tilde{w}_{n}(\bar{x}_{n})=\frac{c_{3, s}}{2} \int_{\R^{3}} \frac{2\tilde{w}_{n}(\bar{x}_{n})-\tilde{w}_{n}(\bar{x}_{n}+\xi)-\tilde{w}_{n}(\bar{x}_{n}-\xi)}{|\xi|^{3+2s}} d\xi\leq 0.
\end{align}
In view of (\ref{HZ0}) and (\ref{HZ6}), we get $\bar{x}_{n}\in \R^{3}\setminus B_{R_{3}}$,
which together with (\ref{HZ7}) and (\ref{HZ8}) yields
$$
(-\Delta)^{s} \tilde{w}_{n}(\bar{x}_{n})+\frac{V_{0}}{2(a+bM^{2})}\tilde{w}_{n}(\bar{x}_{n})<0,
$$
which is a contradiction due to (\ref{HZ00}).
Hence, (\ref{HZ5}) holds true, and using (\ref{HZ1}) and $v_{n}\leq w_{n}$ we obtain
\begin{align*}
0\leq v_{n}(x)\leq w_{n}(x)\leq \frac{\tilde{C}}{1+|x|^{3+2s}} \mbox{ for all } n\in \mathbb{N}, x\in \R^{3},
\end{align*}
for some constant $\tilde{C}>0$. 
From the definition of $v_{n}$, we have 
\begin{align*}
|\hat{u}_{n}|(x)&=|u_{\e_{n}}|\left(\frac{x}{\e_{n}}\right)=v_{n}\left(\frac{x}{\e_{n}}-\tilde{y}_{n}\right) \\
&\leq \frac{\tilde{C}}{1+|\frac{x}{\e_{n}}-\tilde{y}_{\e_{n}}|^{3+2s}} \\
&=\frac{\tilde{C} \e_{n}^{3+2s}}{\e_{n}^{3+2s}+|x- \e_{n} \tilde{y}_{\e_{n}}|^{3+2s}} \\
&\leq \frac{\tilde{C} \e_{n}^{3+2s}}{\e_{n}^{3+2s}+|x-\eta_{\e_{n}}|^{3+2s}},
\end{align*}
which gives the desired estimate.
\end{proof}

\noindent

\end{document}